\newmdtheoremenv{prob}{Problem}
\newtheorem{theorem}{Theorem}
\newtheorem{lemma}[theorem]{Lemma}
\newtheorem{remark}[theorem]{Remark}
\newtheorem{prop}[theorem]{Proposition}
\newtheorem{defn}[theorem]{Definition}
\newtheorem{assumption}{Assumption}
\newtheorem{example}{Example}
\newcommand{\R}{\mathbb{R}}
\newcommand{\cs}{c^\star}
\newcommand{\as}{\alpha^\star}
\definecolor{gnred}{RGB}{255,91,89}
\definecolor{gnred1}{RGB}{71,0,0} 
\definecolor{gnred2}{RGB}{117,0,0} 
\definecolor{gnred3}{RGB}{164,0,0} 
\definecolor{gnred4}{RGB}{211,0,0} 
\definecolor{gnred5}{RGB}{255,0,0} 
\definecolor{gnred6}{RGB}{255,42,34} 
\definecolor{gnred7}{RGB}{255,91,89} 
\definecolor{gnblue1}{RGB}{0,36,71}   
\definecolor{gnblue2}{RGB}{0,60,118}  
\definecolor{gnblue3}{RGB}{0,85,164}
\definecolor{gnblue4}{RGB}{0,108,212}
\definecolor{gnblue4}{RGB}{0,108,212}
\definecolor{gnblue5}{RGB}{0,133,255}  
\definecolor{gnblue6}{RGB}{35,156,255} 
\definecolor{gnblue7}{RGB}{88,177,255} 
\definecolor{gnbrown1}{RGB}{71,27,0}  
\definecolor{gnbrown2}{RGB}{117,45,0} 
\definecolor{gnbrown3}{RGB}{164,62,0} 
\definecolor{gnbrown4}{RGB}{211,80,0} 
\definecolor{gnbrown5}{RGB}{255,97,0} 
\definecolor{gnbrown6}{RGB}{255,127,26} 
\definecolor{gnbrown7}{RGB}{255,155,86} 
\newcommand{\until}[1]{\{1,\dots, #1\}}
\newcommand{\subscr}[2]{#1_{\textup{#2}}}
\newcommand{\setdef}[2]{\{#1 \; | \; #2\}}
\newcommand{\map}[3]{#1: #2 \rightarrow #3}
\newcommand{\e}{\mathrm{e}}
\newcommand{\real}{\ensuremath{\mathbb{R}}}
\newcommand{\realpositive}{\ensuremath{\mathbb{R}}_{>0}}
\newcommand{\realnonnegative}{\ensuremath{\mathbb{R}}_{\ge 0}}
\newcommand{\realextended}{\overline{\real}}
\newcommand\oprocendsymbol{\hbox{$\triangle$}}
\newcommand\oprocend{\relax\ifmmode\else\unskip\hfill\fi\oprocendsymbol}
\DeclareSymbolFont{bbold}{U}{bbold}{m}{n}
\DeclareSymbolFontAlphabet{\mathbbold}{bbold}
\newcommand{\vect}[1]{\mathbbold{#1}}
\newcommand{\vectorzeros}[1][]{\vect{0}_{#1}}
\newcommand{\ds}{\displaystyle}
\newcounter{saveenum}
\newcommand*{\mydoi}[1]{\href{http://dx.doi.org/#1}{\includegraphics[width=.75em]{doi.png}}}
\newcommand{\jac}[1]{D\mkern-0.75mu{#1}}
\newcommand{\jacx}[1]{D_x\mkern-0.5mu{#1}}
\newcommand{\jacu}[1]{D_{\theta}\mkern-0.5mu{#1}}
\newcommand{\GB}{Gr\"onwall\xspace}
\newcommand{\seminorm}[1]{{\left\vert\kern-0.25ex\left\vert\kern-0.25ex\left\vert #1
		\right\vert\kern-0.25ex\right\vert\kern-0.25ex\right\vert}}
\newcommand{\semimeasure}[1]{\mu_{\seminorm{\cdot}}\kern-0.5ex\left(#1\right)}
\newcommand{\osL}{\operatorname{osL}}
\newcommand{\Lip}{\operatorname{Lip}}
\newcommand{\realpart}{\operatorname{{Re}}}
\renewcommand{\realpart}{\mathrm{Re}}
\newcommand{\norm}[2]{\|#1\|_{#2}}
\DeclareMathOperator*{\argmin}{arg\,min}
\DeclareMathOperator{\proj}{P}
\newcommand{\Id}{\mathsf{Id}}
\newcommand{\OF}{\mathsf{F}}
\newcommand{\OT}{\mathsf{T}}
\newcommand{\mcX}{\mathcal{X}}
\newcommand{\mcU}{\mathcal{U}}
\newcommand{\mcY}{\mathcal{Y}}
\newcommand{\mcB}{\mathcal{B}}
\newcommand{\mcC}{\mathcal{C}}
\newcommand{\xstar}{x^{\star}}
\newcommand{\xstardot}{\dot{x}^{\star}}
\newcommand{\fpal}{\subscr{\OF}{PAL}}
\newcommand{\prox}[1]{\mathrm{prox}_{#1}}
\newcommand{\VI}[2]{\mathrm{VI}(#1,#2)}
\newcommand{\Hess}{\nabla^2}
\newcommand{\relu}{\mathrm{ReLU}}
\newcommand{\bmin}{\rho}
\newcommand{\bmax}{\ell}
\newcommand{\amin}{\subscr{a}{min}}
\newcommand{\amax}{\subscr{a}{max}}
\newcommand{\lognorm}[2]{\mu_{#2}(#1)}
\newcommand{\mf}{\rho}
\newcommand{\lf}{\ell}
\newcommand{\trasp}[1]{#1^{\top}}
\newcommand{\0}{\mbox{\fontencoding{U}\fontfamily{bbold}\selectfont0}}
\newcommand{\z}[1]{\0_{#1}}
\newcommand{\fzero}{\0}
\newcommand{\change}[1]{\textcolor{black}{#1}}
\newcommand{\oldchange}[1]{\textcolor{black}{#1}}
\newcommand{\changeafter}[1]{\textcolor{black}{#1}}
\title{Time-Varying Convex Optimization: A Contraction and Equilibrium Tracking Approach}
\author{Alexander Davydov$^{a}$, \and Veronica Centorrino$^{b}$, \and Anand Gokhale$^{a}$, \and Giovanni Russo$^{c}$, \and Francesco Bullo$^{a}$
\thanks{This work was in part supported by NSF Graduate Research Fellowship under Grant No. 2139319 and AFOSR project FA9550-21-1-0203. \change{GR was supported by the European Union-Next Generation EU Mission 4 Component 1 CUP E53D23014640001.} \change{The last author thanks Ryotaro Shima for insightful discussions.}}%
\thanks{$^{a}$ Center for Control, Dynamical 
Systems, and Computation, UC Santa Barbara, Santa Barbara, CA 93106 USA. {\tt\small{\{davydov, anand\_gokhale, bullo\}@ucsb.edu}}
}
\thanks{$^{b}$ Scuola Superiore Meridionale, Naples, Italy. {\tt\small veronica.centorrino@unina.it}.}
\thanks{$^{c}$ Department of Information and Electric Engineering and Applied Mathematics, University of Salerno, Italy. {\tt\small giovarusso@unisa.it}.}
}
\date{\today}
\begin{document}

\maketitle
\begin{abstract}
 In this article, we provide a novel and broadly-applicable
 contraction-theoretic approach to continuous-time time-varying convex
 optimization.  For any parameter-dependent contracting dynamics, we show
 that the tracking error is asymptotically proportional to the rate of
 change of the parameter \change{and that} the proportionality constant is
 upper bounded by Lipschitz constant in which the parameter appears divided
 by the contraction rate of the dynamics squared.  \change{We additionally
   establish that augmenting any parameter-dependent contracting dynamics
   with a feedforward prediction term ensures that the tracking error
   vanishes exponentially quickly.}  To apply these results to
 time-varying convex optimization, we establish the strong infinitesimal
 contractivity of dynamics solving three canonical problems: monotone
 inclusions, linear equality-constrained problems, and composite
 minimization problems. For each \change{case}, we \change{derive} the
 sharpest-known contraction rates and provide explicit bounds on the
 tracking error between solution trajectories and minimizing
 trajectories. We validate our theoretical results on \change{two}
 numerical examples \change{and on an} application to control barrier
   function-based controller design \change{that involves real hardware}.
\end{abstract}

\begin{IEEEkeywords}
	\change{Time-varying convex optimization, contraction theory, equilibrium tracking, tracking error estimates, feedforward prediction}
\end{IEEEkeywords}

\section{Introduction}

\textit{Problem description and motivation: }Mathematical optimization is a fundamental tool in science and engineering research and has pervaded countless application areas. The classical perspective on mathematical optimization is numerical and is motivated through implementation of iterative algorithms on digital devices. An alternative perspective is to view optimization algorithms as dynamical systems and to understand the performance of these algorithms via their dynamical systems properties, e.g., stability and robustness. 

Studying optimization algorithms as continuous-time dynamical systems has been an active area of research since the seminal work of Arrow, Hurwicz, and Uzawa~\cite{KJA-LH-HU:58}. 
Notable examples include Hopfield and Tank in dynamical neuroscience~\cite{JJH-DWT:85}, Kennedy and Chua in analog circuit design~\cite{MPK-LOC:88}, and Brockett in systems and control~\cite{RWB:91}. Recently, the interest in continuous-time dynamics for optimization and computation has been renewed due to the advent of (i) online and dynamic feedback optimization~\cite{GB-JC-JIP-EDA:22}, (ii) reservoir computing~\cite{GT-TY-JBH-RN-NK-ST-HN-DN-AH:19}, and (iii) neuromorphic computing~\cite{CDS-SRK-MP-JPM-PD:22}. 

Motivated by these recent developments, we are interested in time-varying convex optimization problems and continuous-time dynamical systems that track their optimal solutions. In many applications of interest, the optimization algorithm must be run in real-time on problems that are time-varying. Such examples include tracking a moving target, estimating the path of a stochastic process, and online learning. In such application areas, we would like our dynamical system to converge to the unique optimal solution when the problem is time-invariant and converge to an explicitly-computable neighborhood of the optimal solution trajectory when the problem is time-varying.

Beyond tracking optimal trajectories, a key desirable feature of optimization algorithms is robustness in the face of uncertainty. In many real-world scenarios, we are not provided the exact value of our cost function but instead noisy estimates and possibly even a time-delayed version of it. Thus, for practical usage of the optimization algorithm, it is essential to ensure that the algorithm has these robustness features \emph{built-in}. 

Remarkably, all of these desirable properties, namely  tracking for time-varying systems, convergence for time-invariant systems, and robustness to noise and time-delays
can be established by ensuring that the dynamical system is \emph{strongly infinitesimally contracting}~\cite{WL-JJES:98}. To be specific, (i) the effect of the initial condition is exponentially forgotten and the distance between any two trajectories decays exponentially quickly~\cite{WL-JJES:98}, (ii) if a contracting system is time-invariant, it has a unique globally exponentially stable equilibrium point~\cite{WL-JJES:98}, (iii) contracting systems are incrementally input-to-state stable and thus robust to disturbances~\cite{SX-GR-RHM:21} and delays~\cite{PHAN-HT:18}.

\textit{Literature review:} 
A recent survey on studying optimization algorithms from a feedback control perspective is available in~\cite{AH-ZH-SB-GH-FD:24}.
Asymptotic and exponential stability of dynamical systems solving convex optimization problems is a classical problem and has been studied in papers including~\cite{JW-NE:11,AC-BG-JC:17,GQ-NL:19,JC-SKN:19,NKD-SZK-MRJ:19} among many others. Compared to papers studying asymptotic and exponential stability, there are far fewer works studying the contractivity of dynamical systems solving optimization problems. A few exceptions include~\cite{HDN-TLV-KT-JJES:18,PCV-SJ-FB:19r} which analyze primal-dual dynamics and \cite{PMW-JJES:20,JWSP-FB:12za} which study gradient flows on Riemannian manifolds.

In the context of time-varying convex optimization, algorithms to track the optimal solution are designed based on Newton's method in (i) discrete-time in~\cite{AS-AM-AK-GL-AR:16,AS-EDA:17} and in (ii) continuous-time in~\cite{MF-SP-VMP-AR:18}.
See both~\cite{EDA-AS-SB-LM:20} and~\cite{AS-EDA-SP-GL-GBG:20} and the references therein for reviews of these results and theoretical extensions. These results have been leveraged to study the feedback interconnection of a LTI system and a dynamical system solving an optimization problem in~\cite{MC-ED-AB:20}. From a contraction theory perspective, both~\cite{HDN-TLV-KT-JJES:18} and~\cite{PCV-SJ-FB:19r} provide tracking error bounds for continuous-time time-varying primal-dual dynamics.

\textit{Contributions:}  
This paper makes four main contributions. First, we prove a general theorem regarding parameter-dependent strongly infinitesimally contracting dynamics. Specifically, in Theorem~\ref{thm:bound_time_varying_eq}, we prove that \change{both} the tracking error, \change{defined as the error between any solution trajectory and the equilibrium trajectory (defined instantaneously in time), and the norm of the vector field are uniformly upper bounded}. 
Moreover, we prove that the tracking error is asymptotically proportional to the rate of change of the parameter with proportionality constant upper bounded by $\ell_\theta/c^2$ where $\ell_\theta$ is the Lipschitz constant in which the parameter appears and $c$ is the contraction rate of the dynamics.
A related result was proved in~\cite[Lemma~2]{HDN-TLV-KT-JJES:18}, but the tracking error bound depends on the knowledge of the rate of change of the equilibrium trajectory, which is unknown, in general. \change{In contrast,} Theorem~\ref{thm:bound_time_varying_eq} provides \change{an additional bound on the norm of the vector field and a tracking error bound} which depends purely on the rate of change of the parameter, \change{which may be more directly applicable}. 

\oldchange{Second, in Theorem~\ref{thm:exact=tracking}, we propose an alternative dynamical system which augments the contracting dynamics in Theorem~\ref{thm:bound_time_varying_eq} with a feedforward term. This augmentation ensures that the tracking error is exponentially decaying to zero and does not require any Lipschitz condition on how the parameter appears in the dynamics.} \change{A related, continuous-time, treatment is proposed in~\cite{MF-SP-VMP-AR:18}, (see also the early reference~\cite{YZ-MNSS:98}) where the authors study a continuous-time Newton method and show how to add a feedforward term to ensure zero tracking error in the Euclidean norm. In discrete-time, the authors of~\cite{AS-AM-AK-GL-AR:16,AS-EDA:17} use predictor-corrector methods based on Newton's method with or without projections. Compared to these references, Theorem~\ref{thm:exact=tracking} is applicable to any contracting dynamics with respect to any norm and need not be limited to the solution of a time-varying optimization problem.}

\oldchange{Third}, we consider natural transcriptions into contracting dynamics for three canonical strongly convex optimization problems (namely, (i) monotone inclusions, (ii) linear equality-constrained problems, and (iii) composite minimization), and we make specific contributions for each transcription.  
\begin{enumerate}
\item For monotone inclusion problems, we \change{consider the  \emph{forward-backward splitting dynamics} which were first studied in~\cite{BA-HA:15}. These dynamics are} a generalization of the projected dynamics studied in~\cite{XBG:03} and the proximal gradient dynamics studied in~\cite{SHM-MRJ:21}. In Theorem~\ref{thm:contractivity-forwardbackward}, we show that the forward-backward splitting dynamics are contracting, a stronger property than exponential stability as was shown in~\cite[Theorem~4]{XBG:03} and~\cite[Theorem~2]{SHM-MRJ:21} and show improved rates of exponential convergence in some special cases.
  
    \item For linear-equality constrained problems, we study the primal-dual dynamics and prove their strong infinitesimal contractivity in Theorem~\ref{thm:primal-dual-contractivity}. Compared to~\cite{HDN-TLV-KT-JJES:18}, we provide an explicit estimate on the rate of contraction and we show improved rates compared to both~\cite{GQ-NL:19,PCV-SJ-FB:19r}.
    \item For composite minimization, we adopt the proximal augmented Lagrangian approach from~\cite{NKD-SZK-MRJ:19}, first introduced in~\cite{MF:75}, and show that the primal-dual dynamics on the proximal augmented Lagrangian are contracting in Theorem~\ref{thm:contractivity-pal}. This result improves on the exponential convergence result from~\cite[Theorem~3]{NKD-SZK-MRJ:19} by allowing for a larger range of parameters. A related result is~\cite[Theorem~2]{GQ-NL:19}, which focuses specifically on inequality-constrained minimization problems. Theorem~\ref{thm:contractivity-pal} is a generalization of~\cite[Theorem~2]{GQ-NL:19} to more general composite minimization problems and provides a nonlinear program to estimate the contraction rate. Moreover, to approximate the optimal value of the nonlinear program, we provide a strategy based on a bisection algorithm.
\end{enumerate}

\oldchange{Finally}, we apply our general result on tracking error bounds for contracting dynamics from \oldchange{Theorems~\ref{thm:bound_time_varying_eq} and~\ref{thm:exact=tracking}} to each of the aforementioned optimization problems to provide tracking error estimates in time-varying convex optimization problems. \oldchange{To validate our theory, we present numerical \change{and hardware} experiments. In Sections~\ref{sec:equality-ex} and~\ref{sec:inequality-ex} we showcase tracking error bounds for time-varying equality and inequality-constrained minimization problems, respectively. In Section~\ref{sec:CBF}, inspired by~\cite{ZM-FB-AGA:23r}, we present a modern application to online control barrier functions,~\cite{ADA-XX-JWG-PT:17}, where we show how we can leverage our tracking error results for contracting dynamics to ensure safety in a multi-robot collision avoidance scenario without needing to solve a quadratic program at every instance in time.}

\section{Notation and Preliminaries}
\paragraph{Notation}
Define $\realextended := [{-}\infty,+\infty]$. We let $\0_n \in \real^n$ be the all-zeros vector, $I_n$ be the $n \times n$ identity matrix, and $\map{\Id}{\R^n}{\R^n}$ be the identity map.
For symmetric $A, B \in \real^{n\times n}$, $A \preceq B$ means $B-A$ is positive semidefinite.

We let $\| \cdot \|$ be both a norm on $\real^n$ and its corresponding induced matrix norm on $\real^{n \times n}$. 
Given $A \in \R^{n \times n}$ the \emph{logarithmic norm} (log-norm) induced by $\| \cdot \|$ is defined by
$
\mu(A) := \lim_{h\to 0^{+}} \frac{\|I_n + h A\| - 1}{h}.
$
Given a symmetric positive-definite matrix $P \in \real^{n\times n}$, we let $\|\cdot\|_{P}$ be \oldchange{the} $P$-weighted $\ell_2$ norm $\|x\|_{P} := \sqrt{x^\top P x}$, $x \in \real^n$ and write $\|\cdot\|_2$ if $P = I_n$. The corresponding log-norm is $\mu_{P}(A) =\min\setdef{b \in \real}{PA + A^\top P \preceq 2bP}$~\cite[Lemma~2.7]{FB:24-CTDS}.

Given two normed spaces $(\mcX, \|\cdot\|_{\mcX})$, $(\mcY,\|\cdot\|_{\mcY})$ a map $\map{F}{\mcX}{\mcY}$ is Lipschitz from $(\mcX, \|\cdot\|_{\mcX})$ to $(\mcY,\|\cdot\|_{\mcY})$ with constant $\ell \geq 0$ if for all $x_1,x_2 \in \mcX$, it holds that
$
    \|F(x_1)-F(x_2)\|_{\mcY} \leq \ell\|x_1-x_2\|_{\mcX}.
$
If $\mcY = \mcX$ and $\|\cdot\|_{\mcX} = \|\cdot\|_{\mcY}$, we instead say $F$ is Lipschitz on $(\mcX,\|\cdot\|_{\mcX})$ with constant $\ell \geq 0$. A map $\map{F}{\real^n}{\real^n}$ is one-sided Lipschitz on $(\real^n, \|\cdot\|_P)$ with constant $b \in \real$ if for all $x_1,x_2 \in \real^n$ it holds that
$$
    (F(x_1) - F(x_2))^\top P(x_1 - x_2) \leq b\|x_1-x_2\|_P^2.
$$
We refer to~\cite[Section~3.2]{FB:24-CTDS} for generalizations of one-sided Lipschitz maps to more general norms.
We denote by $\Lip(F)$ the minimum \emph{Lipschitz constant} of $F$ and by 
$\osL(F)$ the \emph{one-sided Lipschitz constant} of $F$.
If $F$ is a multi-variable function we write $\Lip_{(\cdot)}(F)$ and $\osL_{(\cdot)}(F)$ to specify the variable with respect to which we are computing the Lipschitz and one-sided Lipschitz constant, respectively.
\oldchange{The \emph{upper-right Dini derivative} of a function $\map{\varphi}{\R}{\R}$ at $t$ is $D^+\varphi(t) := \limsup_{h\to 0^+} \bigl(\varphi(t+h) - \varphi(t)\bigr)/h$. The function $\map{\relu}{\R}{\R_{\geq 0}}$, is defined by $\relu(x) = \max\{0, x\}$.} 

\paragraph{Contraction theory} Given a continuous vector field $\map{F}{\real_{\geq 0} \times \real^n}{\real^n}$ with $(t,x) \mapsto F(t,x)$, a norm $\|\cdot\|$ on $\real^n$, and a constant $c >0$ ($c = 0)$ referred as \emph{contraction rate}, we say that $F$ is \emph{strongly (weakly) infinitesimally contracting with respect to $\|\cdot\|$ with rate $c$} if for all $t \geq 0$, the map $x \mapsto F(t,x)$ is one-sided Lipschitz with constant $-c$, i.e., $\osL_x(F) \leq -c$.  
For $F$ that is locally Lipschitz in $x$, $\osL_x(F) \leq -c$ if and only if $\mu(\jac{F}(t,x)) \leq -c$ for all $t \geq 0$ and almost every \oldchange{(a.e.)} $x \in \real^n$, where $\jac{F}(t,x):= \partial F(t,x)/\partial x$ is the Jacobian of $F$ with respect to $x$~\cite[Theorem~16]{AD-AVP-FB:22q}. 
Note that this Jacobian exists for \oldchange{a.e.} $x$ in view of Rademacher's theorem.
If $x(\cdot)$ and $y(\cdot)$ are two trajectories satisfying $\dot{x}(t) = F(t,x(t)), \dot{y}(t) = F(t,y(t))$, then $\|x(t) - y(t)\| \leq \e^{-c(t-t_0)}\|x(t_0) - y(t_0)\|$ for all $t \geq t_0 \geq 0$. 

One of the main benefits of contraction theory is that, with just a single condition, it ensures global
exponential convergence, along with other useful robustness properties. We refer to~\cite{FB:24-CTDS} for a recent review of \change{these} tools.

\paragraph{Convex analysis and monotone operators}
Let $\mcC \subseteq \real^n$ be convex, closed, and nonempty. \change{The} map $\map{\iota_{\mcC}}{\real^n}{\realextended}$ is the \emph{indicator function on $\mcC$} and is defined by $\iota_{\mcC}(z) = 0$ if $z \in \mcC$ and $\iota_{\mcC}(z) = +\infty$ otherwise. The map $\map{\proj_{\mcC}}{\R^n}{\mcC}$ is the \emph{projection \oldchange{on $\mcC$}} and is given by
$\proj_{\mcC}(x) = \argmin_{u \in \mcC} \|x-u\|_2$.

The epigraph of a map $\map{g}{\real^n}{\realextended}$ is the set $\setdef{(x,\xi) \in \real^{n+1}}{g(x) \leq \xi}$. 
The map $g$ is (i) \emph{convex} if its epigraph is a convex set, (ii) \emph{proper} if its value is never $-\infty$ and is finite somewhere, and (iii) \emph{closed} if it is proper and its epigraph is a closed set.
The map $\map{g}{\real^n}{\realextended}$ is (i) \emph{strongly convex with parameter $\mf > 0$} if the map $x \mapsto g(x) - \frac{\mf}{2}\|x\|_2^2$ is convex and (ii) strongly smooth with parameter $\lf \geq 0$ if it is differentiable and $\nabla g$ is Lipschitz on $(\real^n, \|\cdot\|_2)$ with constant $\lf$. 

Let $g$ be convex, closed, and proper (CCP). The \emph{subdifferential of $g$ at $x \in \real^n$} is the set $\partial g(x) := \setdef{z \in \real^n}{g(x) - g(y) \geq z^\top(x-y) \text{ for all } y \in \real^n}$. The \emph{proximal operator of $g$ with parameter $\gamma > 0$}, $\map{\prox{\gamma g}}{\real^n}{\real^n}$, is defined by
$$
\prox{\gamma g}(x) = \argmin_{z \in \real^n} g(z) + \frac{1}{2\gamma}\|x - z\|_2^2.
$$
The associated \emph{Moreau envelope \oldchange{of $g$ with parameter $\gamma > 0$}}, $\map{M_{\gamma g}}{\real^n}{\real}$, and its gradient are given by:
\begin{align}
M_{\gamma g}(x) &= g(\prox{\gamma g}(x)) + \frac{1}{2\gamma} \|x - \prox{\gamma g}(x)\|_2^2, \nonumber\\
\nabla M_{\gamma g}(x) &= \frac{1}{\gamma}(x - \prox{\gamma g}(x)). \label{eq:Moreau-gradient}
\end{align}
The gradient of the Moreau envelope always exists and is Lipschitz on $(\real^n, \|\cdot\|_2)$ with constant $1/\gamma$.

A map $\map{F}{\real^n}{\real^n}$ is (i) \emph{monotone} if $(F(x) - F(y))^\top (x-y) \geq 0$ for all $x, y \in \real^n$ and
(ii) \emph{strongly monotone with parameter $m > 0$} if the map $F - m \Id$ is monotone.
We refer to~\cite{HHB-PLC:17} for a comprehensive treatment of these tools.

\newcommand{\newparam}{\theta}
\newcommand{\newparamset}{\Theta}
\section{Equilibrium Tracking for Parameter-Varying Contracting Dynamical Systems}

\oldchange{We begin by considering a dynamical system which is a function of a time-varying parameter, $\theta$. Namely, 
for a vector field $\map{F}{\real^n \times \real^d}{\real^n}$, consider the system
  \begin{equation}
    \label{eq:system_1}
    \dot x(t) = F\bigl(x(t),\newparam(t)\bigr), \quad x(0) = x_0\in \real^n,
  \end{equation}
  where for all $t \geq 0$, $x(t)$ and $\newparam(t)$ take value in
  $\mathcal{X}\subseteq \real^n$ and $\newparamset\subseteq \real^d$,
  respectively.}

{We make the following assumptions.}
\change{There exists a norm $\|\cdot\|_{\mcX}$ on $\mcX$ and}
  \begin{enumerate}[label=\textup{(A\arabic*)}, leftmargin=0.9 cm,noitemsep]
    \setcounter{enumi}{\value{saveenum}}
  \item \label{ass:1_}  there exists $c > 0$ such that for all $\newparam$, the map $ x \mapsto F(x,\newparam)$ is strongly infinitesimally contracting with respect to $\|\cdot\|_{\mcX}$ with rate $c$, i.e., $\osL_x{(F)} \leq{-}c$,
  \item \label{ass:2_}
   \change{there exists a norm $\|\cdot\|_{\newparamset}$ on $\newparamset$, and }$\ell_{\newparam} \geq 0$ such that for all $x$, the map $\newparam \mapsto F(x,\newparam)$ is Lipschitz from $(\newparamset, \|\cdot\|_{\newparamset})$ to $(\mcX,\|\cdot\|_{\mcX})$ with constant $\ell_{\newparam}$.
  \end{enumerate}

\change{
Assumption~\ref{ass:1_} implies that, for each $\newparam \in \newparamset$, there exists a unique $x_{\newparam}^\star \in \mathcal{X}$ satisfying $F(x_{\newparam}^\star,{\newparam}) = \fzero_n$. Then, we can define the map $\map{\xstar}{\newparamset}{\mcX}$ given by $\xstar(\newparam) = x_{\newparam}^\star$. Lemma~\ref{lemma:parametrized-contractions} in Appendix~\ref{app:1:Proofs and Additional Results} shows that $\xstar(\cdot)$ is Lipschitz from $(\Theta, \|\cdot\|_{\Theta})$ to $(\mcX, \|\cdot\|_{\mcX})$ with constant $\ell_{\theta}/c$.
With this set up in mind, in the following we define the \emph{time-varying equilibrium curve} which is key in our equilibrium tracking results.
\begin{defn}[Time-varying equilibrium curve]
Consider a continuously differentiable curve $\map{\newparam}{\realnonnegative}{\newparamset}$ and the system~\eqref{eq:system_1} satisfying Assumptions~\ref{ass:1_} and~\ref{ass:2_}. The \emph{time-varying equilibrium curve} is the map $t \mapsto x^\star(\newparam(t))$.
\end{defn}
}

Since $\xstar(\cdot)$ is Lipschitz, 
\oldchange{the curve} $\xstar(\newparam(\cdot))$ is \change{locally Lipschitz}
(see Lemma~\ref{lem:bound_xstar_dot} in Appendix~\ref{app:1:Proofs and Additional Results} for details). 
\oldchange{Additionally, }this curve satisfies $F(\xstar(\newparam(t)), \newparam(t)) = \fzero_n$ for all $t \geq 0$. In the following theorem, we provide tracking error bounds between any trajectory of~\eqref{eq:system_1} and the time-varying equilibrium curve.

\begin{theorem}[Equilibrium tracking for contracting dynamics]
\label{thm:bound_time_varying_eq}
Let $\map{\newparam}{\realnonnegative}{\newparamset}$ be continuously differentiable and
consider the dynamics~\eqref{eq:system_1}
satisfying \oldchange{Assumptions}~\ref{ass:1_} and~\ref{ass:2_}.  Let
$x^\star(\newparam(\cdot))$ be the \emph{time-varying equilibrium curve}
of~\eqref{eq:system_1}. Then, for any initial conditions ${x(0) \in \R^n}$,
${\newparam(0) \in \R^d}$ and \change{for all $t \geq 0$:}
\begin{enumerate}
\item\label{fact:eqtrack:3} 
\change{the tracking error $\|x(t) - \xstar(\newparam(t))\|_{\mcX}$ satisfies}
  \begin{multline*}
    \|x(t){-}\xstar(\newparam(t))\|_{\mcX}  \\ \leq \e^{-ct}\|x(0){-}\xstar(\newparam(0))\|_{\mcX} + \frac{\ell_{\newparam}}{c}\int_0^t \! \e^{-c(t-\tau)}\|\dot \newparam(\change{\tau})\|_{\newparamset}d\tau;
  \end{multline*}
\item\label{fact:eqtrack:3.5} 
\change{the residual $\|F(x(t),\theta(t))\|_{\mcX}$ satisfies}
  \begin{multline*}
    \change{\|F(x(t),\theta(t))\|_{\mcX}} \\ \change{\leq \e^{-ct}\|F(x(0),\theta(0))\|_{\mcX} + \ell_{\newparam} \int_0^t \! \e^{-c(t-\tau)}\|\dot \newparam(\tau)\|_{\newparamset}d\tau;}
  \end{multline*}
\item\label{fact:eqtrack:4}  
\change{the following asymptotic bounds hold:}
  \begin{align*}
    \ds\limsup_{t \to \infty}\|x(t){-}\xstar(\change{\theta}(t))\|_{\mcX} &\leq \frac{\ell_{\oldchange{\theta}}}{c^2} \limsup_{t \to \infty} \|\oldchange{\dot{\theta}}(t)\|_{\change{\Theta}}, \\
    \change{\ds\limsup_{t \to \infty}\|F(x(t),\theta(t))\|_{\mcX}} &\change{\leq \frac{\ell_{\theta}}{c} \limsup_{t \to \infty} \|\dot{\theta}(t)\|_{\change{\Theta}}.}
  \end{align*}
\end{enumerate}
\end{theorem}\smallskip
\begin{proof}
  \change{To prove item~\ref{fact:eqtrack:3}}, consider the auxiliary
  dynamics
  \begin{equation}
    \label{eq:system_3}
    \dot x(t) = F(x(t),\newparam(t)) + v(t) := T(x(t),\newparam(t),v(t)),
  \end{equation}
  where $\map{T}{\real^n \times \real^d \times \real^n}{\real^n}$ and
  $\map{v}{\realnonnegative}{\mathcal{X}}$. Note that by Assumption~\ref{ass:1_}, for fixed $\newparam$ and $v$, the map $x \mapsto T(x,\newparam,v)$ is
  strongly infinitesimally contracting with rate ${c > 0}$. Moreover, at
  fixed $x$, $\newparam$, the map $v \mapsto T(x,\newparam,v)$ is Lipschitz on $(\mcX,\|\cdot\|_{\mcX})$ with constant
  $\ell_v = 1$.  Consider the inputs $v_1(t) = \0_n$ and $v_2(t) =
  \xstardot(\newparam(t))$ and note that $\xstardot(\newparam(t)) = F(\xstar(\newparam(t)), \newparam(t)) +
  \xstardot(\newparam(t))$ so that the curve $\xstar(\newparam(\cdot))$ is a solution to
  the dynamical system~\eqref{eq:system_3} with input $v_2(t)$ and initial
  condition $\xstar(\newparam(0))$. Additionally, for any initial condition $x(0)
  \in \mcX$, the solution $x(t)$ to the dynamics~\eqref{eq:system_1} is a
  solution to the system~\eqref{eq:system_3} with input $v_1(t)$. By an
  application of the incremental ISS theorem for contracting dynamical systems~\cite[Theorem~3.16]{FB:24-CTDS} to the trajectories $x(\cdot),
  \xstar(\newparam(\cdot))$ arising from inputs $v_1(\cdot),v_2(\cdot)$, we have
  the bound for a.e. $t$
  \begin{align*}
    D^{+}\|x(t){-}\xstar(\newparam(t))\|_{\mcX} &\leq  {-}c\|x(t){-}\xstar(\newparam(t))\|_{\mcX} {+}\|\dot x^\star(\change{\theta}(t))\|_{\mcX}\\
    &\leq {-}c\|x(t){-}\xstar(\newparam(t))\|_{\mcX} {+} \frac{\ell_{\newparam}}{c}\|\dot \newparam(t)\|_{\newparamset},
  \end{align*}
  where the last inequality follows from Lemma~\ref{lem:bound_xstar_dot}.
  \change{Item~\ref{fact:eqtrack:3} is then a 
  consequence of the \GB inequality for Dini derivatives, e.g.~\cite[Lemma~11]{AD-SJ-FB:20o}.}  
  \change{To prove item~\ref{fact:eqtrack:3.5}, consider a trajectory $x(t)$ of~\eqref{eq:system_1} and let $V(t) = \|F(x(t),\theta(t))\|_{\mcX}$. Then, omitting dependencies of $x$ and $\theta$ on time, we compute}
  \begin{align*}
      &\change{D^+V(t) \overset{(\star)}{=} \lim_{h \to 0^+} \frac{\|F(x,\theta) + h\frac{d}{dt}F(x,\theta)\|_{\mcX} - \|F(x,\theta)\|_{\mcX}}{h}} \\
      &\change{\overset{(\triangle)}{\leq} \lim_{h \to 0^+} \frac{\|F(x,\theta) + h\jacx{F(x,\theta)F(x,\theta)}\|_{\mcX} - \|F(x,\theta)\|_{\mcX}}{h}} \\
      & \quad \change{+ \|\jacu{F(x,\theta)\dot{\theta}\|}_{\mcX}} \\
      &\change{\overset{\ref{ass:2_}}{\leq} \|F(x,\theta)\|_{\mcX}\lim_{h \to 0^+} \frac{\|I_n+ h\jacx{F(x,\theta})\|_{\mcX} - 1}{h} + \ell_\theta \|\dot{\theta}\|_{\Theta}} \\
      &\change{\leq \mu_{\mcX}(\jacx{F(x,\theta)}V(t) + \ell_\theta \|\dot{\theta}\|_{\Theta} \overset{\ref{ass:1_}}{\leq} -cV(t) + \ell_\theta \|\dot{\theta}\|_{\Theta},}
  \end{align*}
  \change{where $(\star)$ holds by a Taylor expansion of $F$ in $t$, inequality $(\triangle)$ is a consequence of $\frac{d}{dt}F(x,\theta) = \jacx{F(x,\theta)}\dot{x} + \jacu{F(x,\theta)}\dot{\theta}$ and the triangle inequality, inequalities~\ref{ass:2_} and~\ref{ass:1_} are a consequence of Assumptions~\ref{ass:2_} and~\ref{ass:1_}, respectively. Item~\ref{fact:eqtrack:3.5} then follows by
  the \GB inequality. }\change{Item~\ref{fact:eqtrack:4} is a consequence of items~\ref{fact:eqtrack:3} and~\ref{fact:eqtrack:3.5}.}
\end{proof}

\oldchange{Theorem~\ref{thm:bound_time_varying_eq} is a general result that establishes that one does not need to know $\dot{x}^\star(\newparam(t))$ in order to get an estimate on the tracking error. Indeed, by leveraging Lemma~\ref{lemma:parametrized-contractions}, we know that the map $\xstar$ has Lipschitz bound $\ell_\theta/c$ and this is one of the key steps in establishing the asymptotic bound~\ref{fact:eqtrack:4}. This bound gives designers insight on how they may speed up their dynamics to provide lower values of tracking error.}

\change{Additionally, if we have knowledge of $\dot{\newparam}$, we can augment the contracting dynamics~\eqref{eq:system_1} with a feedforward term that ensures an exponential decay to zero tracking error. To do so, consider a parameter-dependent vector field $\map{F}{\real^n \times \real^d}{\real^n}$ continuously differentiable in both arguments and satisfying  Assumption~\ref{ass:1_}. Let $\map{\newparam}{\realnonnegative}{\newparamset \subseteq \real^d}$ be continuously differentiable. We introduce the \emph{time-varying contracting dynamics with feedforward prediction}:
\begin{equation}
\label{eq:feedforward}
\begin{aligned}
\dot{x}(t) = F\bigl(&x(t),\newparam(t)\bigr) \\
&- \bigl(\jac{}_x F(x(t),\newparam(t))\bigr)^{-1}\jac{}_{\newparam} F(x(t),\newparam(t)) \dot{\newparam}(t).
\end{aligned}
\end{equation}}
\change{
  Note that Assumption~\ref{ass:1_} implies the inequality $\mu(\jac{}_xF(x,\theta)) \leq -c$, for all $x,\theta$.
  From~\cite[Theorem~2.9(ii)]{FB:24-CTDS}, we know that the eigenvalues of $\jac{}_xF(x,\theta)$ are in the open left half plane, which implies invertibility of $\jac{}_xF(x,\theta)$ and ensures that the dynamics~\eqref{eq:feedforward} are well-posed.
}

\change{
In the following result, we show that considering the dynamics~\eqref{eq:feedforward} we obtain exponential decay to zero tracking error.}
\oldchange{\begin{theorem}[Exact tracking with feedforward prediction]\label{thm:exact=tracking}
Let $\map{F}{\real^n \times \real^d}{\real^n}$ be a parameter-dependent vector field, and let $\map{\newparam}{\realnonnegative}{\newparamset \subseteq \real^d}$ be continuously differentiable. Assume $F$ is continuously differentiable in both arguments and \change{satisfies} Assumption~\ref{ass:1_}. \change{Consider the dynamics~\eqref{eq:feedforward}.} Then for all $t \geq 0,$
\begin{enumerate}
        \item\label{item:exact-1} the residual $\|F(x(t),\newparam(t))\|_{\mcX}$ satisfies
        $$
            \|F(x(t),\newparam(t))\|_{\mcX} \leq \e^{-ct}\|F(x(0),\newparam(0))\|_{\mcX};
        $$
        \item\label{item:exact-2} the tracking error $\|x(t)-\xstar(\newparam(t))\|_\mcX$ satisfies 
        $$
            \|x(t) - \xstar(\newparam(t))\|_{\mcX} \leq \frac{1}{c}\e^{-ct}\|F(x(0),\newparam(0))\|_{\mcX};
        $$
        {Additionally, if} $F$ is Lipschitz in its first argument with constant $\ell_x$ uniformly in $\theta$, then
        $$
          \|x(t) - \xstar(\theta(t))\|_\mcX \leq \frac{\ell_x}{c}\e^{-ct}\|x(0) - \xstar(\theta(0))\|_\mcX.
        $$
    \end{enumerate}
\end{theorem}
}\smallskip
\begin{proof}
    {To prove item~\ref{item:exact-1}, consider } \oldchange{a trajectory $x(t)$ of~\eqref{eq:feedforward}, and let $V(t) = \|F(x(t),\newparam(t))\|_{\mcX}$. Then, omitting dependencies of $x$ and $\newparam$ on time, we compute
    \begin{align*}
        &D^+ V(t)
        \overset{(\star)}{=} \lim_{h\to 0^+} \frac{\|F(x,\newparam) + h\frac{d}{dt}F(x,\newparam)\|_{\mcX} - \|F(x,\newparam)\|_{\mcX}}{h} \\
        &\overset{\eqref{eq:feedforward}}{=} \lim_{h\to 0^+} \frac{\|F(x,\newparam)+h\jac{}_x F(x,\newparam) F(x,\newparam)\|_{\mcX} - \|F(x,\newparam)\|_{\mcX}}{h} \\
        &\leq \|F(x,\newparam)\|_{\mcX}\lim_{h \to 0^+} \frac{\|I_n + h \jac{}_x F(x,\theta)\|_{\mcX}-1}{h} \\
        &\leq \mu_{\mcX}(\jac{}_x F(x,\newparam)) V(t) \leq -cV(t),
    \end{align*}
    where \change{$(\star)$} is by a Taylor expansion of $F$ in $t$
    and the \change{next equality} holds since~\eqref{eq:feedforward}
    implies that $\frac{d}{dt}F(x,\newparam) =
    \jac{}_xF(x,\newparam)\dot{x} + \jac{}_{\newparam}
    F(x,\newparam)\dot{\newparam} = \jac{}_x F(x,\newparam)F(x,\newparam).$
    The \GB inequality for Dini derivatives implies
    item~\ref{item:exact-1}. Item~\ref{item:exact-2} is a consequence of
    the fact that $\|F(x,\newparam)-F(\xstar(\newparam),\newparam)\|_{\mcX}
    \geq c\|x-\xstar(\newparam)\|_{\mcX}$ since
    $F(\xstar(\newparam),\newparam) = \vectorzeros[n]$ and for fixed
    $\newparam$, the map $x \mapsto F(x,\newparam)$ is invertible and the
    inverse map is Lipschitz on $(\mcX, \|\cdot\|_{\mcX})$ with constant
    $1/c$~\cite[Lemma~3.5]{FB:24-CTDS}.}
\end{proof}
\oldchange{Note that compared to Theorem~\ref{thm:bound_time_varying_eq}, Theorem~\ref{thm:exact=tracking} does not require Assumption~\ref{ass:2_} but does additionally require differentiability of $F$.
By assuming knowledge of $\dot{\newparam}$, we can leverage this information to exponentially achieve zero tracking error for arbitrary contracting dynamics, $F$. Comparatively, in Theorem~\ref{thm:bound_time_varying_eq}, we do not assume knowledge of $\dot{\newparam}$ and cannot expect to achieve zero tracking error as a result.
}

\section{Contracting Dynamics for Canonical \\ Convex Optimization Problems}

In this section, we provide a transcription from three canonical optimization problems to continuous-time dynamical systems which are strongly infinitesimally contracting. This transcription allows us to apply \oldchange{Theorems~\ref{thm:bound_time_varying_eq} and~\ref{thm:exact=tracking}} to time-varying instances of these problems. Specifically, we analyze (i) monotone inclusions, (ii) linear equality constrained problems, and (iii) composite minimization problems.

\newcommand{\fFB}{\subscr{\OF}{FB}}

\subsection{Monotone Inclusions}

We consider the following general problem which has found many applications in convex optimization, see~\cite{EKR-WY:21}.
\begin{prob}\label{prob:splitting}
Let
$\map{\OF}{\real^n}{\real^n}$ be monotone and
${\map{g}{\real^n}{\realextended}}$ be convex. We are interested in solving
the \emph{monotone inclusion problem}
\begin{equation}\label{eq:mon-splitting}
\text{Find } \xstar \in \real^n \text{ s.t. } \quad \z{n} \in (\OF + \partial g)(\xstar).
\end{equation}
\end{prob}
We make the following assumptions on $\OF$ and $g$:
\begin{assumption}\label{assm:mon-splitting}
    $\map{\OF}{\real^n}{\real^n}$ is strongly monotone with 
    parameter $m$ and Lipschitz on $(\real^n,\|\cdot\|_2)$ with constant $\lf$. The map
    $\map{g}{\real^n}{\realextended}$ is CCP. \oprocend
\end{assumption}

Under Assumption~\ref{assm:mon-splitting}, the monotone inclusion problem~\eqref{eq:mon-splitting} has a unique solution due to strong monotonicity of $\OF$.

Monotone inclusion problems of the form~\eqref{eq:mon-splitting} are prevalent in convex optimization and data science and we present two canonical problems which can be stated in terms of the monotone inclusion~\eqref{eq:mon-splitting}. 

\begin{example}[Convex minimization]
First, consider the convex optimization problem
\begin{equation}\label{eq:constrained}
\min_{x \in \real^n} f(x) + g(x),
\end{equation}
where $\map{f}{\real^n}{\real}$ is strongly convex and continuously differentiable and $\map{g}{\real^n}{\realextended}$ is CCP. In this case, the unique point $x^* \in \real^n$ that minimizes~\eqref{eq:constrained} also solves the inclusion problem~\eqref{eq:mon-splitting} with $\OF = \nabla f$. 
\end{example}

\begin{example}[Variational inequalities]
Second, consider the \emph{variational inequality} defined by the continuous monotone mapping ${\map{\OF}{\real^n}{\real^n}}$ and nonempty, convex, and closed set $\mcC$ which is the problem
\begin{equation}\label{eq:var-ineq}
\text{Find } x^\star \in \mcC \text{ s.t. } \quad \OF(x^\star)^\top (x-x^\star) \geq 0, \quad \forall x \in \mcC.
\end{equation}
We denote problem~\eqref{eq:var-ineq} by $\VI{\OF}{\mcC}$. 
It is known that ${x^* \in \mcC}$ solves $\VI{\OF}{\mcC}$ if and only if for all $\gamma > 0$, $x^*$ is a fixed point of the map $\proj_{\mcC} \circ (\Id - \gamma \OF)$, i.e.,
$x^* = \proj_{\mcC}(x^* - \gamma \OF({x^*})).$
In turn, this fixed-point condition is equivalent to asking $x^*$ to solve the monotone inclusion problem~\eqref{eq:mon-splitting} with $g = \iota_{\mcC}$, see, e.g.,~\cite[pp.~37]{EKR-SB:16}. Variational inequalities of the form~\eqref{eq:var-ineq} have found applications in computing Nash and Wardrop equilibria in games~\cite{DP-BG-FP-MK-JL:19}.
\end{example}

To solve the monotone inclusion problem~\eqref{eq:mon-splitting}, we
\change{consider} the following dynamics \change{from~\cite{BA-HA:15}}, \change{called} the \emph{continuous-time
forward-backward splitting dynamics} with parameter $\gamma > 0$:
\begin{equation}\label{eq:fwd-backward-dynamics}
\dot{x} = -x + \prox{\gamma g}(x - \gamma \OF(x)) =: \fFB^\gamma(x).
\end{equation}
\oldchange{The name forward-backward splitting dynamics comes from the classical forward-backward splitting algorithm from monotone operator theory, see, e.g.,~\cite[Section~26.5]{HHB-PLC:17}.}
\begin{remark}
    When $g = \iota_{\mcC}$ for some convex and closed set $\mcC$, then for any $\gamma > 0$, $\prox{\gamma g} = \proj_{\mcC}$, we are solving $\VI{\OF}{\mcC}$
    ~\eqref{eq:var-ineq} and the dynamics~\eqref{eq:fwd-backward-dynamics} are the \oldchange{\emph{projected dynamics}}
    $$
        \dot{x} = -x + \proj_{\mcC}(x - \gamma \OF(x)),
    $$
    which were studied in~\cite{XBG:03}. 
    Alternatively, when $\OF = \nabla f$ for some continuously differentiable convex function $f$, we are solving the convex optimization problem~\eqref{eq:constrained} and the dynamics~\eqref{eq:fwd-backward-dynamics} correspond to the \oldchange{\emph{proximal gradient dynamics}}
    \begin{equation}\label{eq:prox-grad-dynamics}
        \dot{x} = -x + \prox{\gamma g}(x- \gamma \nabla f(x)),
    \end{equation}
    which were studied in~\cite{SHM-MRJ:21}. \oprocend
\end{remark}

First, we establish that equilibrium points of the dynamics~\eqref{eq:fwd-backward-dynamics} correspond to solutions of the inclusion problem~\eqref{eq:mon-splitting}.

\begin{prop}[Equilibria of~\eqref{eq:fwd-backward-dynamics}]\label{prop:fwd-bwd-properties}
  Suppose Assumption~\ref{assm:mon-splitting} holds. Then for any $\gamma > 0$, 
      $\z{n} \in (\OF + \partial g)(x^*)$ if and only if $x^*{\in \real^n}$ is an equilibrium point of the dynamics~\eqref{eq:fwd-backward-dynamics}.
\end{prop}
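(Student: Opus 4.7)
The plan is to prove the equivalence by translating the fixed-point condition $x^* = \prox{\gamma g}(x^* - \gamma \OF(x^*))$ into a subdifferential inclusion via the defining characterization of the proximal operator, and then canceling $\gamma$.

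First I would rewrite the equilibrium condition for~\eqref{eq:fwd-backward-dynamics}. Setting $\fFB^\gamma(x^*) = \0_n$ gives the fixed-point equation
\begin{equation*}
    x^* = \prox{\gamma g}(x^* - \gamma \OF(x^*)).
\end{equation*}
Next I would invoke the standard characterization of the proximal operator: for a CCP function $g$ and any $\gamma > 0$, one has $z = \prox{\gamma g}(y)$ if and only if $y - z \in \gamma\,\partial g(z)$. This follows directly from the first-order optimality condition for the strongly convex minimization defining $\prox{\gamma g}(y)$. Applying this with $z = x^*$ and $y = x^* - \gamma \OF(x^*)$ yields
\begin{equation*}
    -\gamma \OF(x^*) = (x^* - \gamma \OF(x^*)) - x^* \in \gamma\,\partial g(x^*).
\end{equation*}

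Since $\gamma > 0$, dividing (i.e., using that $\partial g(x^*)$ is a cone-invariant under positive scaling in the sense $\gamma\,\partial g(x^*) = \{\gamma v : v \in \partial g(x^*)\}$) gives $-\OF(x^*) \in \partial g(x^*)$, which is exactly $\0_n \in (\OF + \partial g)(x^*)$. Each step is reversible, so the equivalence holds in both directions.

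There is no real obstacle here; the only ingredient beyond elementary algebra is the resolvent characterization of the proximal operator, which is standard and can be cited directly from~\cite{HHB-PLC:17}. Assumption~\ref{assm:mon-splitting} is not even used in its full strength for this proposition; only the CCP property of $g$ (to make $\prox{\gamma g}$ well-defined and single-valued) is needed, while the strong monotonicity and Lipschitz continuity of $\OF$ will become relevant only in the subsequent contractivity theorem.
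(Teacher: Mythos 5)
Your proof is correct and follows essentially the same route as the paper: the paper also reduces the equilibrium condition to the fixed-point equation $x^* = \prox{\gamma g}(x^* - \gamma \OF(x^*))$ and then invokes the resolvent characterization of $\prox{\gamma g}$, citing \cite[Proposition~26.1(iv)(a)]{HHB-PLC:17}, whereas you unfold that citation into the explicit inclusion $-\gamma\OF(x^*) \in \gamma\,\partial g(x^*)$ and cancel $\gamma$. Your closing observation that only the CCP property of $g$ is really used is consistent with the paper's own remark that the result holds for merely monotone $\OF$.
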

\begin{proof}
    Note that equilibria of~\eqref{eq:fwd-backward-dynamics}, $x^* \in \real^n$, satisfy the fixed point equation
    \begin{equation}\label{eq:fwd-bwd-fixedpoint}
        x^* = \prox{\gamma g}(x^* - \gamma \OF(x^*)).
    \end{equation}
    Moreover, it is known that fixed points of the form~\eqref{eq:fwd-bwd-fixedpoint} also solve the monotone inclusion $\z{n} \in (\OF + \partial g)(x^*)$, see, e.g.,~\cite[Proposition~26.1(iv)(a)]{HHB-PLC:17}, noting that $\prox{\gamma g}$ is the resolvent of $\partial g$ with parameter $\gamma$.
\end{proof}

\begin{remark}
    Proposition~\ref{prop:fwd-bwd-properties} continues to hold under the assumption of monotone $\OF$~\cite[Propositon~26.1(iv)(a)]{HHB-PLC:17}. \oprocend 
\end{remark}

Next, we establish that the dynamics~\eqref{eq:fwd-backward-dynamics} are contracting under assumptions on the parameter $\gamma$.

\begin{theorem}[Contractivity of~\eqref{eq:fwd-backward-dynamics}]\label{thm:contractivity-fwd-bwd}
  \label{thm:contractivity-forwardbackward}
  Suppose Assumption~\ref{assm:mon-splitting} holds. Then
  \begin{enumerate}
  \item\label{item:fwd-bwd-contracting} for every $\gamma \in {]0,2m/\ell^2[}$, the
    dynamics~\eqref{eq:fwd-backward-dynamics} are strongly infinitesimally contracting with respect
    to $\norm{\cdot}{2}$ with rate $1-\sqrt{1 - 2\gamma m + \gamma^2
      \ell^2}$. Moreover,
  the contraction rate is optimized at $\gamma^* = m/\ell^2$.
    \setcounter{saveenum}{\value{enumi}}
  \end{enumerate}
  Additionally,
  \begin{enumerate}\setcounter{enumi}{\value{saveenum}}
  \item\label{item:proximal-gradient} if $\OF = \nabla f$ for some strongly convex $\map{f}{\real^n}{\real}$, for every $\gamma \in {]0,2/\ell[}$, the dynamics~\eqref{eq:fwd-backward-dynamics} are strongly infinitesimally contracting with respect to $\norm{\cdot}{2}$ with rate $1-\max\{|1-\gamma m|, |1-\gamma \ell|\}$. Moreover, the contraction rate is optimized at $\gamma^* = 2/(m+\ell)$;
  \item\label{item:affine-fwd-bwd} if $\OF(x) = Ax + b$ for all $x \in \real^n$, with ${A = A^\top \succ 0}$, then for every $\gamma \in {]1/\lambda_{\min}(A), +\infty[}$, the dynamics~\eqref{eq:fwd-backward-dynamics} are strongly infinitesimally contracting with respect to the norm ${\|\cdot\|_{(\gamma A-I_n)}}$ with rate $1$.
  \end{enumerate}
\end{theorem}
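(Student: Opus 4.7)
The plan is to decompose the dynamics as $\fFB^\gamma(x) = -x + T^\gamma(x)$, where $T^\gamma := \prox{\gamma g}\circ(\Id - \gamma \OF)$, and exploit the elementary sum rule for one-sided Lipschitz constants,
\[\osL(\fFB^\gamma) \leq \osL(-\Id) + \Lip(T^\gamma) = -1 + \Lip(T^\gamma),\]
where in parts (i)--(ii) every constant is taken with respect to $\|\cdot\|_2$. Since $\prox{\gamma g}$ is firmly nonexpansive on $(\real^n,\|\cdot\|_2)$, and therefore $1$-Lipschitz, the problem reduces to bounding $\Lip(\Id - \gamma \OF)$ in parts (i) and (ii). Part (iii) will instead require abandoning the $\ell_2$ norm in favor of a weighted norm tailored to the affine structure of $\OF$.

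For part (i), I would expand $\|(x - \gamma \OF(x)) - (y - \gamma \OF(y))\|_2^2$ and substitute the strong monotonicity bound $(\OF(x) - \OF(y))^\top(x-y) \geq m\|x-y\|_2^2$ together with the Lipschitz bound $\|\OF(x) - \OF(y)\|_2^2 \leq \ell^2\|x-y\|_2^2$ to obtain $\Lip(\Id - \gamma \OF) \leq \sqrt{1 - 2\gamma m + \gamma^2\ell^2}$. This is strictly less than $1$ precisely on $\gamma \in {]0, 2m/\ell^2[}$, and minimizing the quadratic $1 - 2\gamma m + \gamma^2\ell^2$ in $\gamma$ yields $\gamma^\star = m/\ell^2$ by a one-variable calculus step. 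For part (ii), when $\OF = \nabla f$ the preceding bound is loose; I would instead use the sharper classical estimate $\Lip(\Id - \gamma\nabla f) \leq \max\{|1-\gamma m|,|1-\gamma \ell|\}$ for $m$-strongly convex, $\ell$-smooth $f$, obtained by integrating the Hessian range $mI_n \preceq \Hess f \preceq \ell I_n$ along the segment between $x$ and $y$ (with a standard mollification argument if $f$ is not $C^2$). This bound is strictly less than one for $\gamma \in {]0, 2/\ell[}$, and balancing the two absolute values at their crossing point gives $\gamma^\star = 2/(m+\ell)$.

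For part (iii), the key idea is to work in the weighted norm $\|\cdot\|_P$ with $P := \gamma A - I_n$; the assumption $\gamma > 1/\lambda_{\min}(A)$ ensures $P \succ 0$, and crucially $-P$ is exactly the Jacobian of the affine forward step $x \mapsto x - \gamma(Ax + b)$. Setting $u = x - \gamma \OF(x)$ and $v = y - \gamma \OF(y)$, the cancellation $u - v = -P(x-y)$ holds. A direct computation of the $P$-weighted one-sided Lipschitz inner product then gives
\[(\fFB^\gamma(x) - \fFB^\gamma(y))^\top P(x-y) = -\|x-y\|_P^2 - (\prox{\gamma g}(u) - \prox{\gamma g}(v))^\top(u-v),\]
and the second term is non-positive by firm nonexpansiveness of $\prox{\gamma g}$, yielding $\osL(\fFB^\gamma) \leq -1$ in the $\|\cdot\|_P$ norm. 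The main obstacle across the three parts is this identification of the weighted norm in (iii): once the choice $P = \gamma A - I_n$ is guessed --- motivated by the need to cancel against the affine forward step --- firm nonexpansiveness closes the argument in a single line, whereas parts (i)--(ii) reduce to routine estimates on $\Lip(\Id - \gamma \OF)$ combined with standard $\gamma$-optimizations.
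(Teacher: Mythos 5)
Your parts~\ref{item:fwd-bwd-contracting} and~\ref{item:proximal-gradient} follow exactly the paper's route: write $\fFB^\gamma = -\Id + T^\gamma$ with $T^\gamma = \prox{\gamma g}\circ(\Id - \gamma\OF)$, use $\osL(\fFB^\gamma)\leq -1 + \Lip(T^\gamma)$, invoke nonexpansiveness of $\prox{\gamma g}$, and bound $\Lip(\Id-\gamma\OF)$ by the standard cocoercivity-style and Hessian-range estimates with the same one-variable optimizations of $\gamma$. No comments there.

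Your part~\ref{item:affine-fwd-bwd}, however, is a genuinely different and in fact more elementary argument than the paper's, and it is correct. The paper differentiates $\fFB^\gamma$, invokes Rademacher's theorem, uses Lemma~\ref{lemma:symmetry-prox-Moreau} to say $\jac{\prox{\gamma g}}$ is a symmetric $G$ with $0\preceq G\preceq I_n$, and then applies Sylvester's law of inertia plus a log-norm lemma for products of symmetric matrices to conclude $\mu_{(\gamma A - I_n)}(G(I_n-\gamma A))\leq 0$. You instead work directly with the incremental one-sided Lipschitz definition in the weighted inner product: with $P=\gamma A - I_n \succ 0$ and $u = x-\gamma\OF(x)$, $v=y-\gamma\OF(y)$, the cancellation $u-v = -P(x-y)$ gives
\begin{align*}
(\fFB^\gamma(x)-\fFB^\gamma(y))^\top P(x-y) &= -\|x-y\|_P^2 + (\prox{\gamma g}(u)-\prox{\gamma g}(v))^\top P(x-y)\\
&= -\|x-y\|_P^2 - (\prox{\gamma g}(u)-\prox{\gamma g}(v))^\top(u-v) \leq -\|x-y\|_P^2,
\end{align*}
the last step using only monotonicity of $\prox{\gamma g}$ (a consequence of firm nonexpansiveness, as you note). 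This buys you two things relative to the paper: it avoids any appeal to a.e. differentiability of the proximal operator, and it bypasses the Sylvester-law-of-inertia machinery entirely, reducing part~\ref{item:affine-fwd-bwd} to a one-line consequence of resolvent monotonicity once the weight $P=\gamma A - I_n$ is identified. The only thing you lose is the paper's unified Jacobian-log-norm viewpoint across all three items, which it uses again in the later saddle-matrix lemmas, but as a proof of this specific theorem your route is cleaner.
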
\smallskip

\begin{proof}
    Regarding item~\ref{item:fwd-bwd-contracting} note that since
    $\map{g}{\real^n}{\realextended}$ is CCP, for every $\gamma > 0$,
    $\prox{\gamma g}$ is a nonexpansive map with respect to the
    $\norm{\cdot}{2}$ norm~\cite[Proposition~12.28]{HHB-PLC:17}. Moreover,
    for every $\gamma > 0$ the map $\Id - \gamma \OF$ has Lipschitz
    constant upper bounded by $\sqrt{1 - 2\gamma m + \gamma^2\ell^2}$ with
    respect to the $\norm{\cdot}{2}$ norm~\cite[pp. 16]{EKR-SB:16}. Since
    the Lipschitz constant of the composition of the two maps is upper
    bounded by the product of the Lipschitz constants, in light of
    Lemma~\ref{lemma:discrete-to-continuous-contraction} in
    Appendix~\ref{app:1:Proofs and Additional Results}, we conclude that
    for $\gamma \in {]0,2m/\ell^2[}$, $\osL(\fFB^\gamma) \leq -1 +
    \Lip(\prox{\gamma g} \circ (\Id - \gamma \OF)) \leq -1 + \sqrt{1 -
      2\gamma m + \gamma^2\ell^2} < 0.$ Moreover, minimizing
    $\osL(\fFB^\gamma)$ corresponds to minimizing $1 - 2\gamma m +
    \gamma^2\ell^2$ as a function of $\gamma \in {]0, 2m/\ell^2[}$ {. This
      minimization occurs} at $\gamma^* = m/\ell^2$ and yields a one-sided
    Lipschitz estimate of $\osL(\fFB^{\gamma^*}) \leq -1 + \sqrt{1 -
      m^2/\ell^2} < 0$.

    Item~\ref{item:proximal-gradient} follows the same argument as in item~\ref{item:fwd-bwd-contracting} where instead one shows that for all $\gamma > 0$, $\Lip(\Id - \gamma \nabla f) \leq \max\{|1-\gamma m|, |1-\gamma \ell|\}$ as in, e.g.,~\cite[pp.~15]{EKR-SB:16}. Then for all $\gamma \in {]0,2/\ell[}$, $\osL(\fFB^{\gamma}) \leq -1 + \max\{|1-\gamma m|, |1-\gamma \ell|\} < 0$. Moreover, the optimal choice of $\gamma$ is $\gamma^* = 2/(m + \ell)$ and the corresponding bound on $\osL(\fFB^{\gamma^*})$ is $-1 + (\kappa - 1)/(\kappa + 1)$, where $\kappa := \ell/m \geq 1$~\cite[pp.~15]{EKR-SB:16}.

    Regarding item~\ref{item:affine-fwd-bwd}, we compute the Jacobian of $\fFB^\gamma$ for all ${x \in \real^n}$ for which it exists, i.e., $\jac{\fFB^{\gamma}}(x) = -I_n + \jac{\prox{\gamma g}}(I_n - \gamma (Ax + b)) (I_n - \gamma A)$. Note that for all $x \in \real^n$ for which the Jacobian exists, 
    there exists $G = G^\top \in \real^{n\times n}$ with $0 \preceq G \preceq I_n$ satisfying $\jac{\prox{\gamma g}}(I_n - \gamma (Ax + b)) = G,$ see Lemma~\ref{lemma:symmetry-prox-Moreau} in Appendix~\ref{app:1:Proofs and Additional Results}. \oldchange{Additionally, we recall that, for any matrix $A$,  the log-norm translation property holds. That is, $\mu(A + cI_n) = \mu(A) + c$, for all $c \in \real$.} Then for any norm,
    \begin{equation}
    \label{ineq:sup_lognorm_DF}
    \sup_{x} \mu(\jac{\fFB^{\gamma}}(x)) \leq -1 + \max_{0 \preceq G \preceq I_n} \mu(G(I_n-\gamma A)),
    \end{equation}
    where the $\sup$ is over all $x$ for which $\jac{\fFB^{\gamma}}(x)$ exists.
    Moreover, for $\gamma > 1/\lambda_{\min}(A)$, $\gamma A - I_n$ is positive definite and $G(I_n - \gamma A) = (-G)(\gamma A - I_n)$ is the product of two symmetric matrices. An application of Sylvester's law of inertia implies that $G(I_n - \gamma A)$ has all real eigenvalues and that it has the same number of positive, zero, and negative eigenvalues as $-G$ does, i.e., all eigenvalues are nonpositive. Then from~\cite[Lemma~2]{VC-AG-AD-GR-FB:23c}, with the choice of norm $\|\cdot\|_{(\gamma A - I_n)}$, we find 
    \begin{multline*}
        \mu_{(\gamma A - I_n)}(G(I_n - \gamma A)) = \mu_{(\gamma A - I_n)}((-G)(\gamma A - I_n)) \\
        = \max\setdef{\realpart(\lambda)}{\lambda \text{ is an eigenvalue of } G(I_n - \gamma A)} \leq 0,
    \end{multline*}
    \oldchange{where in the last equality we used the definition of $\ell_2$ log-norm}.
    Since this equality holds for all symmetric $G$ satisfying ${0 \preceq G \preceq I_n}$, by applying inequality~\eqref{ineq:sup_lognorm_DF} we have
    $$\sup_{x \in \real^n} \mu_{(\gamma A - I_n)}(\jac{\fFB^{\gamma}}(x)) \leq -1.$$
    This inequality proves the result.
\end{proof}
\begin{remark}
\oldchange{
The rates of contraction in Theorem~\ref{thm:contractivity-fwd-bwd}\ref{item:fwd-bwd-contracting} and~\ref{item:proximal-gradient} are essentially consequences of the standard contraction rates of the forward-backward splitting algorithm in monotone operator theory, see~\cite[pp.~25]{EKR-SB:16} and~\cite[Proposition~26.16]{HHB-PLC:17}. In contrast, Theorem~\ref{thm:contractivity-fwd-bwd}\ref{item:affine-fwd-bwd} provides an improved and sharp rate of contraction in the case of affine $\OF$ for an increased range of $\gamma$. Note that this rate cannot be improved. To see this fact, consider $g = \iota_{\{b\}}$ so that the dynamics are $\dot{x} = -x + b$, which are contracting with rate equal to $1$. It is an open question whether the contraction rates in Theorem~\ref{thm:contractivity-fwd-bwd}\ref{item:fwd-bwd-contracting} and~\ref{item:proximal-gradient} can be improved to $1$ with different choice of $\gamma$ \change{and norm}.
} \oprocend
\end{remark}
\paragraph*{\change{Parameter-varying case}}
\oldchange{
Consider the \emph{parameter-varying inclusion problem}
\begin{equation}\label{eq:parameter-monotone-inc}
   \text{For } \theta \in \Theta, \text{ find } \xstar(\theta) \in \real^n \text{ s.t. } \vectorzeros[n] \in (\OF_\theta + \partial g_\theta)(\xstar(\theta)),
\end{equation}
where for each $\theta \in \Theta,$ the map $\map{\OF_\theta}{\real^n}{\real^n}$ is Lipschitz and strongly monotone and the map $\map{g_\theta}{\real^n}{\realextended}$ is CCP. 
Then, for suitable $\gamma > 0$, the corresponding parameter-varying forward-backward splitting dynamics are given by
\begin{equation}\label{eq:parametric-fwd-bwd-dynamics}
    \dot{x} = -x + \prox{\gamma g_\theta}(x - \gamma \OF_\theta(x)) =: \fFB^\gamma(x,\theta).
\end{equation}
For each $\theta \in \Theta$, these dynamics are strongly contracting in
view of Theorem~\ref{thm:contractivity-fwd-bwd} and thus the
problem~\eqref{eq:parameter-monotone-inc} has a unique solution
$\xstar(\theta)$. Moreover, when $\map{\theta}{\realnonnegative}{\Theta}$
is a continuously differentiable curve, under a Lipschitz condition on the
map $\theta \mapsto \fFB^\gamma(x,\theta),$
Theorem~\ref{thm:bound_time_varying_eq} ensures that trajectories
of~\eqref{eq:parametric-fwd-bwd-dynamics} track $\xstar(\theta(t))$ with a
tracking error proportional to $\|\dot{\theta}(t)\|_{\Theta}$ after a
transient. Such a Lipschitz condition holds if, e.g., the maps $\theta
\mapsto \prox{\gamma g_{\theta}}(x)$ and $\theta \mapsto \OF(x,\theta)$ are
Lipschitz uniformly in $x$\footnote{\oldchange{To determine whether the map
  $\theta \mapsto \prox{\gamma g_\theta}(x)$ is Lipschitz, one can employ
  sensitivity analysis of parametric programs. We refer the interested
  reader to~\cite{AVF:76} for sufficient conditions.}}. } \oldchange{If,
  additionally, $\fFB^\gamma$ is differentiable in both arguments, one can
  design a feedforward term to attain zero tracking error leveraging
  Theorem~\ref{thm:exact=tracking}. In Section~\ref{sec:CBF}, we consider
  an application that takes this approach.}

\subsection{Linear Equality Constrained Optimization}
We study another canonical problem in convex optimization.

\begin{prob}\label{prob:eq-constrained}
    Let $\map{f}{\real^n}{\real}$ be convex, $A \in \real^{m \times n}$, and $b \in \real^m$. Consider the equality-constrained problem
    \begin{equation}\label{eq:equality-constrained}
    \begin{aligned}
    \min_{x \in \real^n} \quad & f(x), \\
    \text{s.t.} \quad & Ax = b.
    \end{aligned}
    \end{equation}
\end{prob}

We make the following assumptions on $f$ and $A$: 
\begin{assumption}\label{assm:primaldual}
$\map{f}{\real^n}{\real}$ is continuously differentiable, strongly convex and strongly smooth with parameters $\mf$
and $\lf$, respectively.
The matrix $A \in \real^{m \times n}$ satisfies ${\amin I_m \preceq AA^\top \preceq \amax I_m}$ for $\amin,\amax \in \realpositive$. \oprocend
\end{assumption}

Note that Problem~\ref{prob:eq-constrained} is a special case of Problem~\ref{prob:splitting} with $\OF = \nabla f$ and $g = \iota_{\mcC}$ where $\mcC = \setdef{z \in \real^n}{Az = b}$. In this case we have that $\prox{\alpha g} = \proj_{\mcC}$ and $\proj_{\mathcal{C}}(z) = z - A^\dagger (Az - b) = (I_n - A^\dagger A)z + A^\dagger b$, where $A^\dagger$ denotes the pseudoinverse of $A$. In the context of the forward-backward splitting dynamics~\eqref{eq:fwd-backward-dynamics}, the dynamics read
\begin{equation}\label{eq:projected-lin-eq}
	\dot{x} = -x + (I_n - A^\dagger A)(x - \gamma\nabla f(x)) + A^\dagger b.
\end{equation}
In light of Theorem~\ref{thm:contractivity-fwd-bwd}\ref{item:proximal-gradient}, for $\gamma \in {]0, 2/\lf[}$, the dynamics~\eqref{eq:projected-lin-eq} are strongly infinitesimally contracting with respect to $\norm{\cdot}{2}$ with rate $1-\max\{|1-\gamma \mf|,|1-\gamma \lf|\}$.

The downside to using the dynamics~\eqref{eq:projected-lin-eq} is the cost of computing $A^\dagger$. To remedy this issue, a common approach is to leverage duality and jointly solve primal and dual problems. In what follows, we take this approach and study contractivity of the corresponding primal-dual dynamics.

The Lagrangian associated to the problem~\eqref{eq:equality-constrained} is
the map $\map{L}{\real^n \times \real^m}{\real}$ given by $L(x,\lambda) =
f(x) + \lambda^\top (Ax - b)$. \change{Computing the gradient descent of
  $L$ in $x$ and gradient ascent of $L$ in $\lambda$, the
  \emph{continuous-time primal-dual dynamics} (also called
  Arrow-Hurwicz-Uzawa flow~\cite{KJA-LH-HU:58}) are}
\begin{equation}\label{eq:primal-dual}
\begin{aligned}
\dot{x} &= -\nabla_x L(x,\lambda) = -\nabla f(x) - A^\top \lambda, \\
\dot{\lambda} &= \nabla_{\lambda} L(x,\lambda) = Ax - b.
\end{aligned}
\end{equation}

\begin{theorem}[Contractivity of primal-dual dynamics]\label{thm:primal-dual-contractivity}
    Suppose Assumption~\ref{assm:primaldual} holds. Then the continuous-time primal-dual dynamics~\eqref{eq:primal-dual} are strongly infinitesimally contracting with respect to $\|\cdot\|_{P}$ with rate $c > 0$ where
    \oldchange{
    \begin{align}
    P &= \begin{bmatrix}
      I_n & \alpha A^\top \\ \alpha A & 
      I_m \end{bmatrix} \succ 0, \ 
    \alpha=\frac{1}{2}\min\Big\{\frac{1}{\lf},\frac{\mf}{\amax}\Big\}, \
    \text{and}\label{eq:P-def} \\
    c&=\frac{1}{2}\alpha\amin = 
    \frac{1}{4}\min\Big\{\frac{\amin}{\lf},\frac{\amin}{\amax}\mf\Big\}.\label{eq:c-def}
    \end{align}}
\end{theorem}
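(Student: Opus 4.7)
My plan is to verify the LMI that characterizes $P$-norm contraction. Using the characterization $\mu_P(M) = \min\{b : PM + M^\top P \preceq 2bP\}$ recalled in the preliminaries, showing $\mu_P(J(x)) \leq -c$ for all $x$ reduces to establishing $PJ(x) + J(x)^\top P + 2cP \preceq 0$ uniformly in $x$. The Jacobian of the primal-dual vector field is $J(x) = \bigl(\begin{smallmatrix} -H & -A^\top \\ A & 0 \end{smallmatrix}\bigr)$, where $H := \nabla^2 f(x)$ satisfies $m_f I \preceq H \preceq \ell_f I$ by Assumption~\ref{assm:primaldual}. As a preliminary I would verify $P \succ 0$: by a Schur complement this reduces to $I_m - \alpha^2 AA^\top \succ 0$, which follows from $\alpha^2 a_{\max} \leq m_f/(9\ell_f) \leq 1/9$ given the definition of $\alpha$ and $m_f \leq \ell_f$.

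A direct block computation yields
\[ -(PJ + J^\top P + 2cP) = \begin{pmatrix} 2(H - cI_n) - 2\alpha A^\top A & \alpha(H - 2cI_n)A^\top \\ \alpha A(H - 2cI_n) & 2(\alpha AA^\top - cI_m) \end{pmatrix}, \]
so the task is to show this matrix is PSD. I would test against an arbitrary $(v_1, v_2) \in \R^{n+m}$ and let $w := A^\top v_2$, so that $\|v_2\|^2 \leq \|w\|^2/a_{\min}$ since $a_{\min} I_m \preceq AA^\top$. The choice $c = \tfrac{5}{6}\alpha a_{\min}$ is precisely what makes the dual-block contribution $2\alpha\|w\|^2 - 2c\|v_2\|^2 \geq \tfrac{1}{3}\alpha\|w\|^2$ strictly positive. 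The only indefinite remaining contribution is the cross term $2\alpha v_1^\top(H - 2cI) w$; since $2c < m_f$ implies $H - 2cI \succ 0$, a weighted Young's inequality gives
\[ 2\alpha v_1^\top(H-2cI) w \geq -\alpha\epsilon\, v_1^\top(H-2cI) v_1 - (\alpha/\epsilon)\, w^\top(H-2cI) w \]
for any $\epsilon > 0$. Taking $\epsilon = 3\ell_f$, the $w$-terms are absorbed (using $H - 2cI \preceq \ell_f I$), leaving a $v_1$-block of the form $(2 - 3\alpha\ell_f)H - 2c(1 - 3\alpha\ell_f) I_n - 2\alpha A^\top A$.

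The main obstacle is to verify this final $v_1$-block is PSD. Using $m_f I \preceq H$, $A^\top A \preceq a_{\max} I$, and $3\alpha\ell_f \leq 1$, it suffices to check the scalar inequality $(2 - 3\alpha\ell_f) m_f - 2c(1 - 3\alpha\ell_f) - 2\alpha a_{\max} \geq 0$. A short case analysis on the two binding regimes $\alpha = 1/(3\ell_f)$ (which occurs when $a_{\max} \leq m_f\ell_f$) and $\alpha = m_f/(3 a_{\max})$ (otherwise) confirms the inequality in both cases, in fact with slack at least $m_f/3$ on the right-hand side. The precise constants $1/3$ in $\alpha$ and $5/6$ in $c$ are exactly those that make this balance work simultaneously in both regimes.
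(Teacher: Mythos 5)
Your proposal is correct and mirrors the paper's overall architecture: the same block form of $-(PJ + J^\top P + 2cP)$, the same preliminary Schur-complement check that $P\succ 0$, and the same lower bound on the dual block, where $c=\tfrac{5}{6}\alpha\amin$ is chosen exactly so that $2\alpha\|w\|^2-2c\|v_2\|^2\geq\tfrac{1}{3}\alpha\|w\|^2$ with $w=A^\top v_2$. The technical execution of the remaining off-diagonal coupling, however, is genuinely different. The paper first factorizes the lower-bounded matrix as $\bigl(\begin{smallmatrix}I_n&0\\0&A\end{smallmatrix}\bigr)(\cdot)\bigl(\begin{smallmatrix}I_n&0\\0&A^\top\end{smallmatrix}\bigr)$ to push the $A$'s to the outside, then takes a Schur complement of the inner $n\times n$ matrix; this leaves a condition \emph{quadratic} in $B$, namely $2B-2(\alpha A^\top A+cI_n)-3\alpha(B-2cI_n)^2\succeq 0$, which the paper further splits into two separate matrix inequalities. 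You instead work with the quadratic form directly (your change of variable $w=A^\top v_2$ plays the same role as the paper's factorization) and apply a weighted Young's inequality with the fixed weight $\epsilon=3\ell_f$; this \emph{avoids} the Schur complement and yields a condition that is \emph{linear} in $H$, namely $(2-3\alpha\lf)H-2c(1-3\alpha\lf)I_n-2\alpha A^\top A\succeq 0$. Your route is somewhat more elementary and gives a cleaner scalar check, at the cost of an a priori weaker intermediate bound (fixing $\epsilon$ rather than optimizing it), but since the same explicit slack $\geq \mf/3$ emerges in both binding regimes $\alpha=1/(3\lf)$ and $\alpha=\mf/(3\amax)$, the final constants $\alpha$ and $c$ still go through. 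Both arguments are valid; yours trades the paper's factorize-then-Schur machinery for a single Young step.
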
\smallskip
\begin{proof}
    Since $f$ is continuously differentiable, convex, and strongly smooth, it is almost everywhere twice differentiable so the Jacobian of the dynamics~\eqref{eq:primal-dual} exists almost everywhere and is given by
$\subscr{J}{PD}(z) := \begin{bmatrix}
-\Hess f(x) & -A^\top \\ A & 0 
\end{bmatrix},$
where $z = (x,\lambda) \in \real^{n+m}$.
To prove strong infinitesimal contraction it suffices to show that for all $z$ for which $\subscr{J}{PD}(z)$ exists, the bound $\mu_{P}(\subscr{J}{PD}(z)) \leq -c$ holds for $P,c$ given in~\eqref{eq:P-def} and~\eqref{eq:c-def}, respectively. The assumption of strong convexity and strong smoothness of $f$ further imply that $\mf I_n \preceq \Hess f(x) \preceq \lf I_n$ for all $x$ for which the Hessian exists. Moreover, it holds:
\begin{equation*}
    \sup_{z}\mu_{P}(\subscr{J}{PD}(z)) \leq \max_{\mf I_n \preceq B \preceq \lf I_n} \mu_{P}\left(\begin{bmatrix}
        -B & -A^\top \\ A & 0
    \end{bmatrix}\right), 
\end{equation*}
where the $\sup$ 
is over all points for which $\subscr{J}{PD}(z)$ exists. The result is then a consequence of Lemma~\ref{lemma:saddle-matrices} in Appendix~\ref{app:2:Logarithmic norm of Hurwitz saddle  matrices}.
\end{proof}

\begin{remark}
\label{rem:comparison}
Our method of proof in Lemma~\ref{lemma:saddle-matrices} follows the same method as was presented in~\cite[Lemma~2]{GQ-NL:19}, but uses a sharper upper bounding to yield a sharper contraction rate of \oldchange{$\ds\frac{1}{4}\min\Big\{\frac{\amin}{\lf},\frac{\amin}{\amax}\mf\Big\}$} compared to the estimate $\ds\frac{1}{8}\min\Big\{\frac{\amin}{\lf},\frac{\amin}{\amax}\mf\Big\}$ in~\cite[Lemma~2]{GQ-NL:19}. The sharper upper bounding is a consequence of an appropriate matrix factorization and a less conservative bounding of the square of a difference of matrices. \oprocend
\end{remark}

\paragraph*{\change{Parameter-varying case}}
\oldchange{
Consider the \emph{parameter-dependent equality-constrained minimization problem}:
\begin{equation}\label{eq:theta-equality-constrained}
    \begin{aligned}
    \min_{x \in \real^n} \quad & f_\theta(x), \\
    \text{s.t.} \quad & Ax = b_\theta,
    \end{aligned}
    \end{equation}
where for each $\theta \in \Theta$, $b_\theta \in \real^m$ and the map $f_\theta$ is continuously differentiable, strongly convex, and strongly smooth with parameters $\rho$ and $\ell$, respectively. We also assume that $A$ is full row rank.}

\oldchange{The \emph{parameter-varying primal-dual dynamics} are
\begin{equation}\label{eq:theta-primal-dual}
\begin{aligned}
\dot{x} &= -\nabla f_\theta(x) - A^\top \lambda, \\
\dot{\lambda} &= Ax - b_\theta,
\end{aligned}
\end{equation}}
\oldchange{and by Theorem~\ref{thm:primal-dual-contractivity}, for fixed $\theta$,~\eqref{eq:theta-primal-dual} is guaranteed to converge {to} the unique primal-dual pair solving~\eqref{eq:theta-equality-constrained}. Moreover, when $\map{\theta}{\realnonnegative}{\Theta}$ is a differentiable curve, under the assumptions that $\theta \mapsto \nabla f_\theta(x)$ and $\theta \mapsto b_\theta$ are Lipschitz, the dynamics~\eqref{eq:theta-primal-dual} are guaranteed to track $\xstar(\theta(t)),\lambda^\star(\theta(t))$ with a tracking error proportional to $\|\dot{\theta}(t)\|$ after a transient. Further, if $\nabla_\theta f$ is differentiable both in $x$ and $\theta$ and $b_\theta$ is differentiable in $\theta$, then we can design a feedforward term involving $\dot{\theta}$ leveraging Theorem~\ref{thm:exact=tracking} to attain zero tracking error.}

\oldchange{It is important to note that we have not let $A$ depend on the parameter $\theta$ since the norm with respect to which the dynamics~\eqref{eq:primal-dual} are contracting depends on $A$. If $A$ depends on $\theta$, then the norm with respect to which the dynamics are contracting is also parameter-dependent and the results from Theorems~\ref{thm:bound_time_varying_eq} and~\ref{thm:exact=tracking} do not directly apply.}

\subsection{Composite Minimization}

Finally, we study a composite minimization problem.
\begin{prob}\label{prob:general-two-objective}
Let $\map{f}{\real^n}{\real}$ and $\map{g}{\real^m}{\realextended}$ be convex, and $A \in \real^{m \times n}$. We consider the problem
    \begin{equation}\label{eq:two-objective-withA}
\min_{x \in \real^n} f(x) + g(Ax).
\end{equation}
\end{prob}

We make the following assumptions on $f,g$, and $A$: 
\begin{assumption}\label{assm:proximal-aug}
    $\map{f}{\real^n}{\real}$ is continuously differentiable, strongly convex with parameter $\mf$, and strongly smooth with parameter $\lf$. The map $\map{g}{\real^m}{\realextended}$ is CCP. Finally, $A \in \real^{m \times n}$ satisfies $\amin I_m \preceq AA^\top \preceq \amax I_m$ for $\amin,\amax \in \realpositive$. \oprocend
\end{assumption}
While the optimization problem~\eqref{eq:two-objective-withA} may appear to be a special case of~\eqref{eq:constrained}, it may be computationally challenging to compute the proximal operator of $g \circ A$ even if the proximal operator of $g$ may have a closed-form expression. Thus, we treat this problem separately. 

The optimization problem~\eqref{eq:two-objective-withA} is equivalent to

\begin{equation}\label{eq:rewritten-two-objective}
\begin{aligned}
\min_{x\in\real^n,y\in \real^m} & f(x) + g(y), \\
\text{s.t. } \quad & Ax - y = \vectorzeros[m].
\end{aligned}
\end{equation}

We leverage the proximal augmented Lagrangian approach proposed in~\cite{NKD-SZK-MRJ:19}. 
For $\gamma > 0$, define the \emph{augmented Lagrangian} associated to~\eqref{eq:rewritten-two-objective} $\map{L_\gamma}{\real^n \times \real^m \times \real^m}{\real}$ by
\begin{equation}
L_\gamma(x,y,\lambda) = f(x) + g(y) + \lambda^\top (Ax - y) + \frac{1}{2\gamma}\|Ax - y\|_2^2,
\end{equation}
and, by a slight abuse of notation, the \emph{proximal augmented Lagrangian} $\map{L_\gamma}{\real^n \times \real^m}{\real}$ by
\begin{equation}
\label{eq:proximal augmented Lagrangian}
L_\gamma(x,\lambda) = f(x) + M_{\gamma g}(Ax + \gamma \lambda) - \frac{\gamma}{2}\|\lambda\|_2^2.
\end{equation}
The proximal augmented Lagrangian \change{equals} the augmented Lagrangian where the minimization over $y$ has already been performed and the optimal value for $y$ has been substituted; see details in~\cite[Theorem~1]{NKD-SZK-MRJ:19}. Moreover, minimizing~\eqref{eq:two-objective-withA} corresponds to finding saddle points of~\eqref{eq:proximal augmented Lagrangian}. 
To this end, the 
\change{proximal augmented Lagrangian primal-dual dynamics are}
\begin{equation}
\label{eq:prox-aug-primal-dual}
\begin{aligned}
\dot{x} &= -\nabla_x L_\gamma(x,\lambda) = -\nabla f(x) {-} A^\top \nabla M_{\gamma g}(Ax + \gamma \lambda), \\
\dot{\lambda} &= \nabla_{\lambda} L_\gamma(x,\lambda) = \gamma(-\lambda + \nabla M_{\gamma g}(Ax + \gamma \lambda)).
\end{aligned}
\end{equation}

\newcommand{\proxineqname}{proximal inequality-constrained primal-dual dynamics\xspace}
\change{Before providing contraction estimates, we showcase the specific form of the dynamics~\eqref{eq:prox-aug-primal-dual} in the case of inequality-constrained minimization problems of the form $\min\setdef{f(x)}{Ax \leq b}$. In this case, $g = \iota_{\mcC}$, where $\mcC = \setdef{z \in \real^m}{z \leq b}$. Then $\prox{\gamma g}(z) = \proj_{\mcC}(z) = \min\{z,b\}$ and the corresponding gradient of the Moreau envelope is $\nabla M_{\gamma g}(z) = \frac{1}{\gamma}\relu(z - b)$, where the $\min$ and $\relu$ are applied entrywise. Finally, the dynamics~\eqref{eq:prox-aug-primal-dual} take the form}
\begin{align}
    \change{\dot{x}} &\change{ \;= -\nabla f(x) - \frac{1}{\gamma}A^\top \relu(Ax + \gamma \lambda - b)}, \nonumber \\
    \change{\dot{\lambda}} &\change{ \;= -\gamma\lambda + \relu(Ax + \gamma\lambda - b),\label{eq:prox-aug-inequality}}
\end{align}
\change{which we refer to as the \emph{\proxineqname}.}

\change{Next, we turn to contraction analysis of the dynamics~\eqref{eq:prox-aug-primal-dual}.}
To provide estimates on the contraction rate and the norm with respect to which the dynamics~\eqref{eq:prox-aug-primal-dual} are strongly infinitesimally contracting, we need to
introduce a useful nonlinear program.
For $\varepsilon \in {]0, 1/\sqrt{\amax}[}$, consider the nonlinear program
\begin{subequations}\label{eq:nonlinear-prog}
\begin{alignat}{2}
&\!\max_{c\geq 0,\alpha \geq 0,\varkappa \geq 0}        &\qquad& c\label{eq:optProb}\\
&\text{s.t.} &      & \alpha \leq \min\Big\{\frac{1}{\sqrt{\amax}} - \varepsilon, \frac{\gamma}{\amax}\Big\},\label{eq:alpha-constraint}\\
&                  &      & \varkappa \geq \frac{2}{3},\label{eq:constraint2} \\
&    &    & c \leq \Big(\frac{3}{4}-\frac{1}{2\varkappa}\Big)\alpha \amin, \label{eq:constraint3}\\
& & & h(c,\alpha,\varkappa) \geq 0, \label{eq:nonlinear-constraint}
\end{alignat}
\end{subequations}
with $\map{h}{\realnonnegative \times \realnonnegative \times \realnonnegative}{\real}$ given by
\begin{align*}
h(c,\alpha,\varkappa) =& \ 2\mf - \relu\Big(2\alpha-\frac{2}{\gamma}\Big)\amax - 2c \\&- \alpha\varkappa\frac{\amax}{\amin} \Big(\gamma^2 \frac{\amax}{\amin} + (\lf + \frac{\amax}{\gamma} + 2c)^2 \\  &+ 2\gamma\frac{\amax}{\amin} (\lf + \frac{\amax}{\gamma} + 2c)\Big). 
\end{align*} 
\begin{arxiv}
We prove in Lemma~\ref{lemma:existence-opt} in Appendix~\ref{app:3:Generalized Saddled Point Matrice} that there exist finite values $c > 0, \alpha > 0, \varkappa > 0$ that solve the problem~\eqref{eq:nonlinear-prog}. 
\end{arxiv}
\begin{theorem}[Contractivity of the dynamics~\eqref{eq:prox-aug-primal-dual}]\label{thm:contractivity-pal}
    Suppose Assumption~\ref{assm:proximal-aug} holds and let $\gamma > 0$ be arbitrary. Then the primal-dual dynamics~\eqref{eq:prox-aug-primal-dual} are strongly infinitesimally contracting with respect to $\|\cdot\|_{P}$ with rate $c^\star > 0$ where
    \begin{equation}\label{eq:P-def-pal}
        P = \begin{bmatrix}
            I_n & \alpha^\star A^\top \\ \alpha^\star A & I_m
        \end{bmatrix},
    \end{equation}
    and $\alpha^\star > 0, c^\star > 0$ are the arguments solving problem~\eqref{eq:nonlinear-prog}.
\end{theorem}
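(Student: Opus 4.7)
}

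My plan is to mimic the strategy used for Theorem~\ref{thm:primal-dual-contractivity}: compute the Jacobian $\OJ_{\text{PAL}}(z)$ of the right-hand side of~\eqref{eq:prox-aug-primal-dual} almost everywhere, verify that $P$ as in~\eqref{eq:P-def-pal} is positive definite, and then show that for all admissible $z=(x,\lambda)$ the matrix inequality $P\,\OJ_{\text{PAL}}(z) + \OJ_{\text{PAL}}(z)^{\!\top} P + 2c^\star P \preceq 0$ holds. Using~\eqref{eq:Moreau-gradient} and Lemma~\ref{lemma:symmetry-prox-Moreau}, at a.e.\ point there exists a symmetric $H$ with $0 \preceq H \preceq \frac{1}{\gamma}I_m$ (namely, $H=\nabla^2 M_{\gamma g}(Ax+\gamma\lambda)$) and a symmetric $B$ with $\mf I_n \preceq B \preceq \lf I_n$ (namely, $B = \Hess f(x)$) such that
\begin{equation*}
\OJ_{\text{PAL}}=\begin{bmatrix} -B - A^{\!\top} H A & -\gamma A^{\!\top} H \\ \gamma H A & -\gamma I_m + \gamma^2 H \end{bmatrix}.
\end{equation*}
Positive definiteness of $P$ follows from the first constraint in~\eqref{eq:alpha-constraint}, which forces $\alpha^\star\sqrt{\amax}<1$, so that $I_m - (\alpha^\star)^2 AA^{\!\top} \succ 0$.

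The core of the argument is to show that the ``$(c,\alpha,\varkappa)$''-parametrized constraints of~\eqref{eq:nonlinear-prog} are precisely a sufficient set of scalar conditions for the block $2\times 2$ matrix $Q(z) := P\,\OJ_{\text{PAL}}+\OJ_{\text{PAL}}^{\!\top} P + 2cP$ to be negative semidefinite, uniformly in the admissible $B,H$. I will carry out the block expansion of $Q(z)$ and then apply the Schur complement with respect to the $(2,2)$ block. The second constraint $\alpha\leq\gamma/\amax$ in~\eqref{eq:alpha-constraint} will be used to ensure the $(2,2)$ block is negative semidefinite (note the $\relu(2\alpha-2/\gamma)\amax$ term in $h$, which vanishes exactly when $\alpha\leq 1/\gamma$, and otherwise contributes the necessary penalty), and constraints~\eqref{eq:constraint2}--\eqref{eq:constraint3} on $\varkappa$ will be used to control the $(2,2)$ block after subtracting $2cP$ using a Young-type inequality with parameter $\varkappa$ of the form $XY^{\!\top}+YX^{\!\top} \preceq \varkappa XX^{\!\top}+\varkappa^{-1}YY^{\!\top}$ applied to the cross terms $-\alpha\gamma AA^{\!\top} H-\alpha\gamma H AA^{\!\top}$. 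This is what makes the Schur complement of the $(2,2)$ block a tractable matrix inequality on the $(1,1)$ block.

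For the $(1,1)$ block, after subtracting the Schur complement I will bound the resulting matrix from above (in the Loewner order) using $B\succeq \mf I_n$, $H \preceq \frac{1}{\gamma}I_m$, and $\amin I_m \preceq AA^{\!\top} \preceq \amax I_m$, introducing factors of $\amax/\amin$ exactly in the positions in which they appear in $h(c,\alpha,\varkappa)$. The constraint $h(c,\alpha,\varkappa)\geq 0$ is precisely the sharpest scalar condition obtained after this chain of bounds. Combined with~\eqref{eq:constraint3}, it ensures $Q(z)\preceq 0$ for all admissible $z$, hence $\mu_P(\OJ_{\text{PAL}}(z))\leq -c^\star$ almost everywhere in $z$. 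Feasibility of~\eqref{eq:nonlinear-prog} for small $c$ (e.g.\ by first choosing $\alpha$ small, then $\varkappa=2/3$ and $c$ sufficiently small so that $h\geq 0$) guarantees $c^\star>0$.

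The main obstacle will be the bookkeeping of the Young-inequality and Schur-complement bounds so that the resulting scalar inequality is \emph{exactly} $h(c,\alpha,\varkappa)\geq 0$; in particular, one must be careful with the $\relu(2\alpha-2/\gamma)\amax$ term, which distinguishes the regime $\alpha\leq 1/\gamma$ (where the $(2,2)$ block $-2\gamma I_m+2\gamma^2 H-\alpha\gamma(AA^{\!\top}H+HAA^{\!\top})$ is trivially negative semidefinite after subtracting $2cI_m$) from the regime $\alpha>1/\gamma$ (where an extra penalty is needed). For clarity I would package the matrix-inequality calculation as a standalone appendix lemma, in the same style as Lemma~\ref{lemma:saddle-matrices} used in the proof of Theorem~\ref{thm:primal-dual-contractivity}, and invoke it here.
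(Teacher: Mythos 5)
Your high-level strategy is essentially the paper's: rewrite the a.e.\ Jacobian in terms of symmetric $B$ with $\mf I_n\preceq B\preceq\lf I_n$ and a symmetric Moreau-envelope Hessian bounded between $0$ and $\frac{1}{\gamma}I_m$, reduce to a uniform LMI over that family, take the Schur complement of the $(2,2)$ block, and package the matrix inequality as a standalone appendix lemma analogous to Lemma~\ref{lemma:saddle-matrices}. (The paper works with $G=\gamma\Hess M_{\gamma g}$, so $0\preceq G\preceq I_m$; your $H$ is $G/\gamma$, and your Jacobian expression matches.) Feasibility of~\eqref{eq:nonlinear-prog} for small $c>0$ is also the right way to conclude $c^\star>0$. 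This is the correct skeleton, and your worry about the bookkeeping is warranted.

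However, two details in your proposal would not survive the actual computation, and they are where the paper's $h$ function and $\varkappa$-parametrization come from. First, you attribute the $\relu\big(2\alpha-\tfrac{2}{\gamma}\big)\amax$ term to the $(2,2)$ block. It actually comes from the $(1,1)$ block: with $G=\gamma H$, one has $Q_{11}=2B+\big(\tfrac{2}{\gamma}-2\alpha\big)A^\top GA-2cI_n$, and the sign of $\tfrac{2}{\gamma}-2\alpha$ determines whether the middle term helps or hurts the lower bound $Q_{11}\succeq(2\mf-\relu(2\alpha-\tfrac{2}{\gamma})\amax-2c)I_n$. Second, the $(2,2)$ block is not ``trivially negative semidefinite'' in the regime $\alpha\leq 1/\gamma$: the cross term $\alpha\gamma(AA^\top H+HAA^\top)$ is a symmetrized product of two PSD matrices, which is not sign-definite in general. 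The paper handles this through Lemma~\ref{lemma:BoundQ22} (Lemma~6 of~\cite{GQ-NL:19}), a nontrivial matrix inequality requiring $\alpha\leq\gamma/\amax$, not a Young inequality. And $\varkappa$ does not enter as a Young parameter on the $(2,2)$ cross terms; after Lemma~\ref{lemma:BoundQ22} gives $Q_{22}\succeq\tfrac{3}{2}\alpha AA^\top-2cI_m$, the constraint~\eqref{eq:constraint3} together with~\eqref{eq:constraint2} is used to ensure $Q_{22}\succeq\tfrac{1}{\kappa}AA^\top$ for a suitable $\kappa$ proportional to $\varkappa/\alpha$, which in turn bounds the Schur complement term $Q_{12}Q_{22}^{-1}Q_{12}^\top\preceq\kappa\,Q_{12}(AA^\top)^{-1}Q_{12}^\top$. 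Without these two corrections, your execution would not recover the exact $h(c,\alpha,\varkappa)\geq 0$ constraint, which you correctly flag as the delicate point.
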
\smallskip
\begin{proof}
    Let $z = (x,\lambda) \in \real^{n + m}$ and let $\map{\OF}{\real^{n+m}}{\real^{n+m}}$ corresponds to the vector field~\eqref{eq:prox-aug-primal-dual} for $\dot{z} = \OF(z)$. Let $y := Ax + \gamma \lambda$ and define $G(y) := \gamma \Hess M_{\gamma g}(y)$ where it exists. The Jacobian of $\OF$ is then
    $$
    \!\!\!\! \jac{\OF}(z) = \begin{bmatrix}
    -\Hess f(x) - \frac{1}{\gamma}A^\top G(y)A & \!\!- A^\top G(y) \\
    G(y)A & \!\! -\gamma (I_m - G(y))
    \end{bmatrix},
    $$
which exists for a.e. $z$. We then aim to show that $\mu_{P}(\jac{\OF}(z)) \leq -c$ for all $z$ for which $\jac{\OF}(z)$ exists. First we note that in light of Lemma~\ref{lemma:symmetry-prox-Moreau} in Appendix~\ref{app:1:Proofs and Additional Results},
\begin{multline*}
\!\!\!\!\!\! \sup_{z} \mu_{P}(\jac{\OF}(z)) {\leq}\!\!\! \max_{\substack{0 \preceq G \preceq I_m \\ \mf I_n \preceq B \preceq \lf I_n}} \!\!\!\!\!\!\mu_{P}\left(\begin{bmatrix}
        {-}B {-} \frac{1}{\gamma} A^\top GA &\!\!\!\! {-}A^\top G \\ GA & \!\!\!\! \gamma (G {-} I_m)
    \end{bmatrix}\right),
\end{multline*}
where the $\sup$ is over all $z$ for which $\jac{\OF}(z)$ exists. The result is then a consequence of Lemma~\ref{lemma:generalized-saddle} in Appendix~\ref{app:2:Logarithmic norm of Hurwitz saddle  matrices}.
\end{proof}

\begin{remark}
    To the best of our knowledge, the solution to the nonlinear program~\eqref{eq:nonlinear-prog} provides \change{the most general} test for the contractivity of the dynamics~\eqref{eq:prox-aug-primal-dual}. The original work~\cite[Theorem~3]{NKD-SZK-MRJ:19}, proves exponential convergence provided that $\gamma > \lf - \mf$. Instead we prove contraction, a stronger property, for all $\gamma > 0$. \change{We compare contraction and convergence rate estimates in Figure~\ref{fig:contraction-rates} in Appendix~\ref{app:2:Logarithmic norm of Hurwitz saddle  matrices}.}
    \oprocend
\end{remark}

Note that any triple $(c,\alpha,\varkappa) \in \realnonnegative^3$ satisfying the constraints~\eqref{eq:alpha-constraint}-\eqref{eq:nonlinear-constraint} provides a suboptimal contraction estimate, i.e., the dynamics~\eqref{eq:prox-aug-primal-dual} are strongly infinitesimally contracting with rate $c$ (weakly contracting if $c = 0$) with respect to norm $\|\cdot\|_{P}$, where $\ds P = \begin{bmatrix}
        I_n & \alpha A^\top \\ \alpha A & I_m
    \end{bmatrix}$.
In what follows, we present computational considerations for estimating the optimal parameters $c^\star,\alpha^\star$ in the nonlinear program~\eqref{eq:nonlinear-prog}. 
Let $\varkappa > 2/3$ be fixed (e.g., at a value of $1$). Then we have the following bounds on $c$ which we will bisect on:
$$0 \leq c \leq c_{\max}^{\varkappa} := \min\Big\{\mf, \Big(\frac{3}{4}-\frac{1}{2\varkappa}\Big)\amin\alpha_{\max}\Big\},$$
where $\alpha_{\max} = \min\Big\{\frac{1}{\sqrt{\amax}} - \varepsilon, \frac{\gamma}{\amax}\Big\}$.
For any value of $c \in [0,c_{\max}^{\varkappa}]$, we check whether the following linear program (LP) is feasible:
\begin{subequations}\label{eq:alpha-feasibility}
\begin{alignat}{2}
&\!\text{Find}        &\qquad& \alpha\label{eq:feasProb}\\
&\text{s.t.} &      & 0 \leq \alpha \leq \alpha_{\max},\\
& & & c \leq \Big(\frac{3}{4}-\frac{1}{2\varkappa}\Big)\alpha\amin, \\
& & & h(c,\alpha,\varkappa) \geq 0.
\end{alignat}
\end{subequations}

Although $h(c,\alpha,\varkappa)$ is not linear in $\alpha$, the problem can be transformed into an equivalent LP since $\relu$ is piecewise linear.
\change{The} LP is feasible for $c = 0$ (with $\alpha = 0$). If the LP is feasible for $c = c_{\max}^{\varkappa}$, then $c_{\max}^{\varkappa}$ is the optimal contraction rate for this choice of $\varkappa$. More typically, the LP will not be feasible for $c_{\max}^{\varkappa}$ at which point we bisect on $c$, checking for feasible $\alpha$ for~\eqref{eq:alpha-feasibility} at the prescribed value of $c$ until we find a $\delta$-optimal value of $c$ with corresponding $\alpha$ that is feasible.

Optimizing further over $\varkappa$ can be done numerically either using nonlinear programming solvers or by using a grid search over $\varkappa$ and then the bisecting on $c$.

\begin{figure*}[th!]
	\centering
	\begin{tabular}{cc}
		\includegraphics[width=0.48\linewidth]{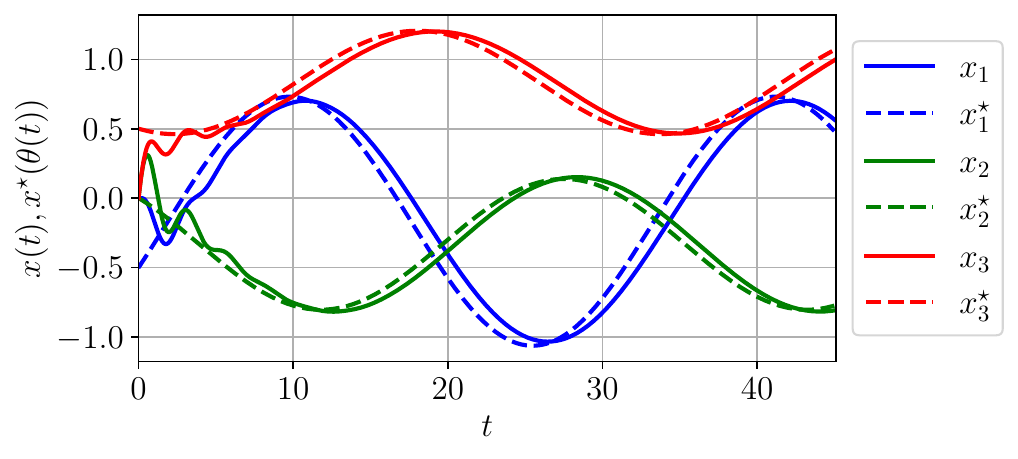}& \includegraphics[width=0.48\linewidth]{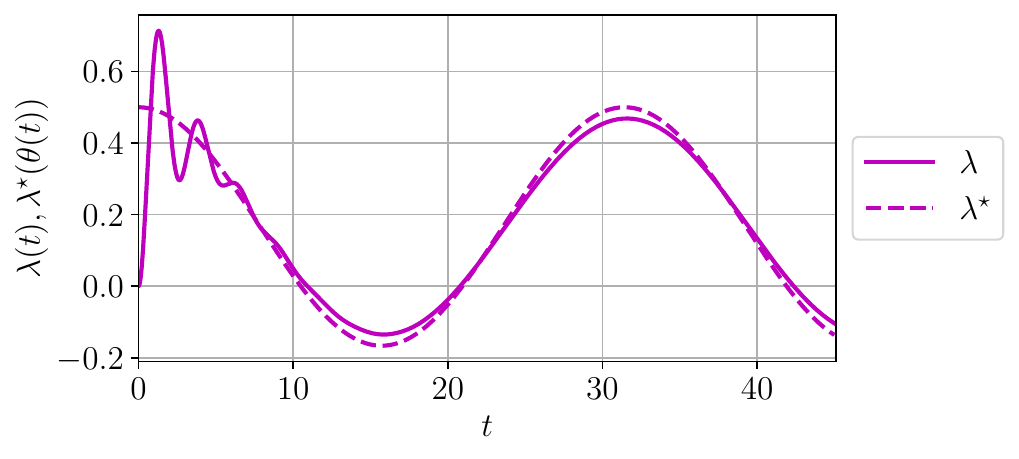}
	\end{tabular}
	\caption{Plots of trajectories of the dynamics~\eqref{eq:dyn-num-sim-1} solving the equality-constrained minimization problem~\eqref{eq:num-sim-1}. The left figure shows the trajectories of the 
    primal variables $x(t)$ as solid curves and the trajectories of the instantaneously optimal primal variables $\xstar(\theta(t))$ as dashed curves. The right figure shows the trajectory of the dual variable $\lambda(t)$ as a solid curve and the instantaneously optimal dual variable $\lambda^\star(\theta(t))$ as a dashed curve.}\label{fig:eq-constrained-sim}
\end{figure*}

\begin{figure}[th!]
    \centering
    \includegraphics[width=0.9\linewidth]{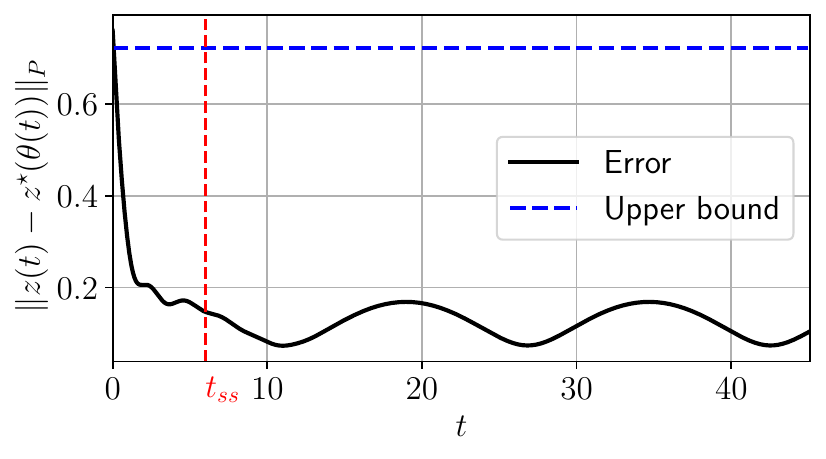}
    \caption{
    Plot of the error $\|z(t) - z^{\star}(\theta(t))\|_{P}$ along with the upper bound $\omega\ell/c^2$ where $z(t) = (x(t),\lambda(t))$ and $z^\star(\theta(t)) = (\xstar(\theta(t)),\lambda^\star(\theta(t)))$. We also denote $3$ time-constants by $\subscr{t}{ss}$, where one time-constant is $1/c = 2$ units of time.
    }\label{fig:eq-constrained-bound}
\end{figure}

\paragraph*{\change{Parameter-varying case}}
{Consider the \emph{parameter-dependent composite minimization problem}
\begin{equation}\label{eq:theta-composite}
    \min_{x \in \real^n} f_\theta(x) + g_\theta(Ax), 
    \end{equation}
where for each $\theta \in \Theta$, the function $f_\theta$ is continuously differentiable, strongly convex, and strongly smooth, the map $g_\theta$ is CCP, \change{and} that $A$ is full row rank. Then the \change{proximal augmented Lagrangian primal-dual dynamics} are
\begin{equation}\label{eq:theta-pal}
\begin{aligned}
\dot{x} &= -\nabla f_\theta(x) - A^\top \nabla M_{\gamma g_{\theta}}(Ax + \gamma \lambda), \\
\dot{\lambda} &= \gamma(-\lambda + \nabla M_{\gamma g_{\theta}}(Ax + \gamma \lambda)).
\end{aligned}
\end{equation}
For each $\theta \in \Theta$, the minimization problem~\eqref{eq:theta-composite} has a unique minimizer $\xstar(\theta)$ and Lagrange multiplier $\lambda^\star(\theta)$ and the dynamics~\eqref{eq:theta-pal} converge to them. Let $\map{\fpal}{\real^{n+m} \times \Theta}{\real^{n+m}}$ denote the vector field for the dynamics~\eqref{eq:theta-pal}. When $\map{\theta}{\realnonnegative}{\Theta}$ is a differentiable, time-varying parameter, under the assumption that $\theta \mapsto \fpal(z,\theta)$ is Lipschitz uniformly in $z$, the dynamics~\eqref{eq:theta-pal} are guaranteed to track $\xstar(\theta(t)), \lambda^\star(\theta(t))$ with a tracking error proportional to $\|\dot{\theta}(t)\|$ after a transient. The assumption that $\fpal$ is Lipschitz in $\theta$ is satisfied if, e.g., $\theta \mapsto \nabla f_\theta(x)$ and $\theta \mapsto \prox{\gamma g_\theta}(x)$ are Lipschitz uniformly in $x$. Finally, if $\fpal$ is differentiable in both of its arguments, then we can leverage Theorem~\ref{thm:exact=tracking} to design a feedforward term involving $\dot{\theta}$ to attain zero tracking error.
}

\oldchange{As in the case of linear equality constrained minimization, we have not let the matrix $A$ depend on the parameter $\theta$. For the same reason as before, the norm with respect to which the dynamics~\eqref{eq:prox-aug-primal-dual} are contracting depends on $A$.}

\section{Numerical and Hardware Experiments}
\label{sec:sim}
In this section, we \change{present experiments} to
showcase the performance of the proposed dynamics.\footnote{\change{Code for our experiments is available at \url{https://github.com/davydovalexander/time-varying-convex}.}} We present \oldchange{an application of Theorem~\ref{thm:bound_time_varying_eq} to a problem} with equality constraints, which corresponds to Problem~\ref{prob:eq-constrained} and a case with inequality constraints, which corresponds to Problem~\ref{prob:general-two-objective}.
Additionally, we consider an application to control barrier function-based design,~\cite{ADA-XX-JWG-PT:17}, for collision avoidance in a multi-robot scenario \change{by leveraging the contraction analysis of the proximal gradient dynamics} from Problem~\ref{prob:splitting}.

\begin{figure*}
	\centering
	\begin{tabular}{cc} \includegraphics[width=0.48\linewidth]{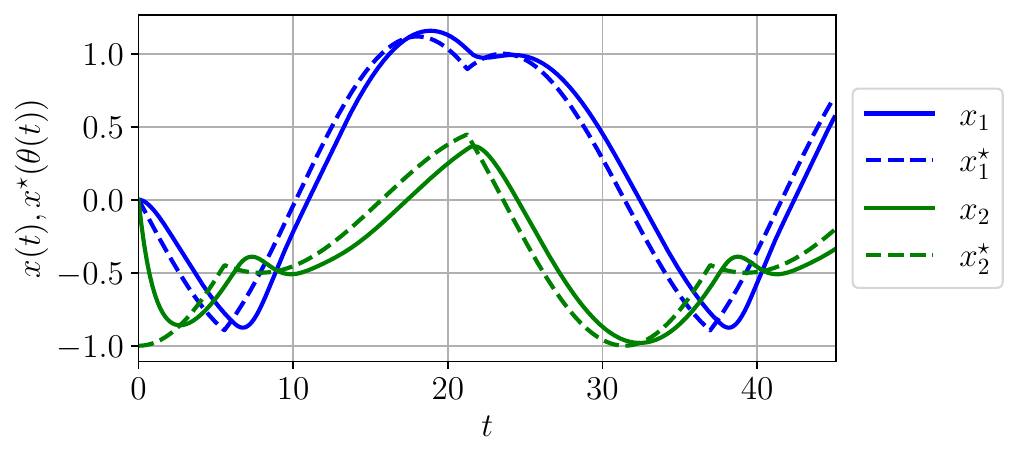}& \includegraphics[width=0.48\linewidth]{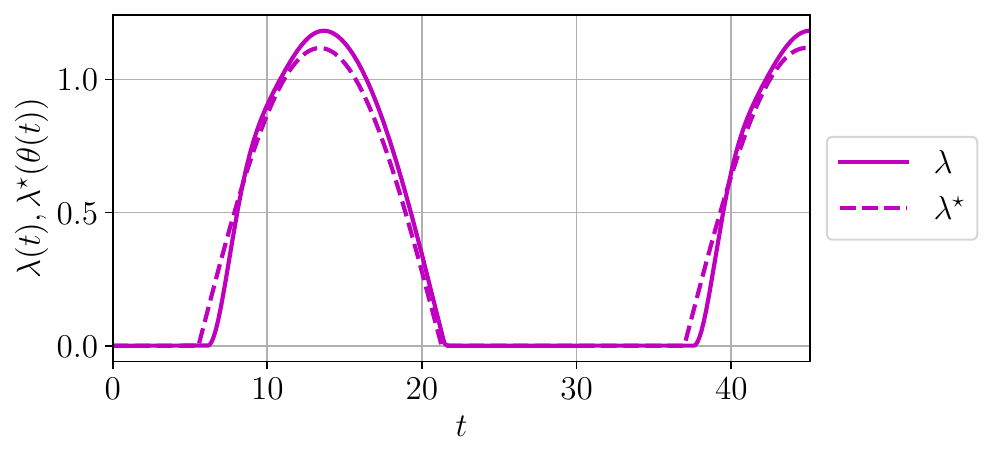}
	\end{tabular}
	\caption{Plots of trajectories of the dynamics~\eqref{eq:dyn-num-sim-2} solving the inequality-constrained minimization problem~\eqref{eq:num-sim-2}. The left figure shows the trajectories of the $2$ primal variables $x(t)$ as solid curves and the trajectories of the instantaneously optimal primal variables $\xstar(\theta(t))$ as dashed curves. The right figure shows the trajectory of the dual variable $\lambda(t)$ as a solid curve and the instantaneously optimal dual variable $\lambda^\star(\theta(t))$ as a dashed curve.}\label{fig:ineq-constrained-sim}
\end{figure*}
\begin{figure}[t!] \centering\includegraphics[width=0.9\linewidth]{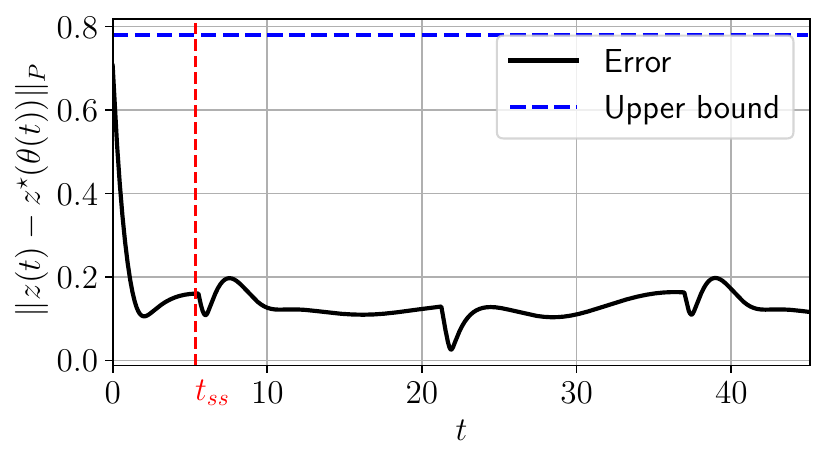}
    \caption{
    Plot of the error $\|z(t) - z^{\star}(\theta(t))\|_{P}$ along with the upper bound $\omega\ell/c^2$ where $z(t) = (x(t),\lambda(t))$ and $z^\star(\theta(t)) = (\xstar(\theta(t)),\lambda^\star(\theta(t)))$. We also denote $3$ time-constants by $\subscr{t}{ss}$, where one time-constant is $1/c\approx 1.78$ units of time.
    }\label{fig:ineq-constrained-bound}
\end{figure}

\subsection{Numerical Experiment: Equality Constraints}\label{sec:equality-ex}
Consider the following time-varying quadratic optimization problem with equality constraints
\begin{equation}\label{eq:num-sim-1}
    \begin{aligned}
\min_{x\in\real^3}\quad & \frac{1}{2}\|x - r(t)\|_2^2, \\
\text{s.t. } \quad & x_1 + 2x_2 + x_3 = \sin(\omega t),
\end{aligned}
\end{equation}
where $\omega = 0.2$ and $r(t) = (\sin(\omega t), \cos(\omega t), 1)$. We can see that~\eqref{eq:num-sim-1} is an instance of~\eqref{eq:theta-equality-constrained} with $n=3$ primal variables and $m = 1$ equality constraints by letting $\theta(t) = (\cos(\omega t), \sin(\omega t)) \in \Theta :=\setdef{z \in \real^2}{\|z\|_2 \leq 1} \subset \real^2$. Letting $\|\cdot\|_\Theta = \|\cdot\|_2$, 
\oldchange{the Lipschitz assumptions are} verified for $f_{\theta(t)}(x) = \frac{1}{2}\|x - r(t)\|_2^2$ and $b_{\theta(t)} = \sin(\omega t)$. 
The corresponding primal-dual dynamics for 
problem~\eqref{eq:num-sim-1} read
\begin{equation}\label{eq:dyn-num-sim-1}
\begin{aligned}
    \dot{x}_1 &= -x_1 + \sin(\omega t) - \lambda, \\ 
    \dot{x}_2 &= -x_2 + \cos(\omega t) - 2\lambda, \\
    \dot{x}_3 &= -x_3 + 1 - \lambda, \\
    \dot{\lambda} &= x_1 + 2x_2 + x_3 - \sin(\omega t),
\end{aligned}
\end{equation}
i.e., the dynamics are that of a linear system. 

We simulate the dynamics~\eqref{eq:dyn-num-sim-1} over the time interval $t \in [0, 45]$ with a forward Euler discretization with stepsize $\Delta t = 0.01$ and set the initial conditions $x(0) = \vectorzeros[3], \lambda(0) = 0$. We plot the trajectories of the dynamics along with the instantaneously optimal values $\xstar(\theta(t)), \lambda^\star(\theta(t))$ in Figure~\ref{fig:eq-constrained-sim}. 

We empirically observe how the trajectories of the dynamics track the
instantaneously optimal values $\xstar(\theta(t)),
\lambda^\star({\theta(t)})$ after a small transient. We then verify that
the bound from Theorem~\ref{thm:bound_time_varying_eq} provides valid upper
bounds for the tracking error.  Finding the norm with respect to which the
stable linear system~\eqref{eq:dyn-num-sim-1} is contracting with largest
rate corresponds to a bisection algorithm and is detailed
in~\cite[Section~2.5.2]{FB:24-CTDS}. After executing the bisection algorithm,
we find that the primal-dual dynamics for~\eqref{eq:num-sim-1} are strongly
infinitesimally contracting with respect to $\|\cdot\|_{P}$ with rate $c =
0.5$ for suitably chosen $P = P^\top \succ 0$. Then the corresponding
Lipschitz constant for the vector field is computed from
$(\Theta,\|\cdot\|_2)$ to $(\real^4, \|\cdot\|_{P})$, and is approximately
$\lf \approx 0.902$. From Theorem~\ref{thm:bound_time_varying_eq}, we know
that the asymptotic tracking error as measured in the $\|\cdot\|_{P}$ norm
is upper bounded by $\frac{\lf}{c^2}\omega \approx 0.722$ since
$\|\dot{\theta}(t)\|_2 = \omega$ for all $t \geq 0$. In
Figure~\ref{fig:eq-constrained-bound} we plot
$\left\|z(t)-z^{\star}(\theta(t))\right\|_{P}$, where $z$ is the stacked
vector of $x$ and $\lambda$, as well as the upper bound
$\frac{\lf}{c^2}\omega$ to demonstrate the validity of our bound.
        
\begin{remark}
    Note that in this example we have leveraged the fact that the dynamics~\eqref{eq:dyn-num-sim-1} are linear to get improved rates of contraction. If we had instead simply used the bound on the contraction rate from Theorem~\ref{thm:primal-dual-contractivity}, we would instead have $c = \oldchange{\frac{1}{4}}$, which would yield looser bounds on the asymptotic error (measured in a different norm, however).  \oprocend
\end{remark}

\subsection{Numerical Experiment: Inequality Constraints}\label{sec:inequality-ex}
Consider the following time-varying quadratic optimization problem with inequality constraints
\begin{equation}\label{eq:num-sim-2}
    \begin{aligned}
\min_{x\in\real^2}\quad & \frac{1}{2}\|x + r(t)\|_2^2, \\
\text{s.t. } \quad & {-}x_1 + x_2 \leq \cos(\omega t),
\end{aligned}
\end{equation}
where ${\omega = 0.2}$ and ${r(t) = (\sin(\omega t),\cos(\omega t))}$. We see that~\eqref{eq:num-sim-2} is an instance of~\eqref{eq:theta-composite}
with ${\map{g}{\real \times \Theta}{\realextended}}$ given by ${g(z,\theta) = \iota_{\mcC_\theta}(z)}$ where ${\mcC_\theta = \setdef{z \in \real}{z \leq \theta_1}}$, $\theta(t) = {(\cos(\omega t), \sin(\omega t)) \in  \Theta := \setdef{z \in \real^2}{\|z\|_2 \leq 1}}$, and ${A = [-1, 1]}$. 
\change{Following the expressions~\eqref{eq:prox-aug-inequality}, the correspoding \proxineqname} 
read
\begin{align}
    \dot{x}_1 &= -x_1 - \sin(\omega t) + \frac{1}{\gamma}\relu(-x_1 + x_2 + \gamma\lambda - \cos(\omega t)), \nonumber\\
    \dot{x}_2 &= -x_2 - \cos(\omega t) - \frac{1}{\gamma}\relu(-x_1 + x_2 + \gamma\lambda - \cos(\omega t)), \nonumber \\
    \dot{\lambda} &= -\gamma\lambda + \relu(-x_1 + x_2 + \gamma\lambda - \cos(\omega t)).\label{eq:dyn-num-sim-2}
\end{align}

We simulate the dynamics~\eqref{eq:dyn-num-sim-2} with $\gamma = 10$ over
the time interval $t \in [0, 45]$ with a forward Euler discretization with stepsize $\Delta t = 0.01$ and 
initial conditions ${x(0) = \vectorzeros[2], \lambda(0) = 0}$. We plot the trajectories of the dynamics along with the instantaneously optimal values $\xstar(\theta(t)), \lambda^\star(\theta(t))$ in Figure~\ref{fig:ineq-constrained-sim}. 

We empirically observe how the trajectories of the dynamics track the instantaneously optimal values $\xstar(\theta(t)), \lambda^\star({\theta(t)})$ after a small transient. We then verify that the bound from Theorem~\ref{thm:bound_time_varying_eq} provides a valid upper bound for the tracking error. Note that 
the vector field for the dynamics~\eqref{eq:dyn-num-sim-2} 
is almost everywhere differentiable and its Jacobian has the structure
\begin{equation}\label{eq:jac-num-sim2}
    \jac{\fpal}(z) = \begin{bmatrix}
        -1 - \frac{1}{\gamma}J & \frac{1}{\gamma}J & J \\ \frac{1}{\gamma} J & -1 - \frac{1}{\gamma}J & -J \\ -J & J & -\gamma + \gamma J
    \end{bmatrix},
\end{equation}
where $J$ denotes the derivative of the $\relu$ evaluated at $-x_1 + x_2 + \gamma \lambda - \cos(\omega t)$ and always takes value in $\{0,1\}$ when it is defined. In other words, the Jacobian always takes one of two values. Finding the norm which maximizes the contraction rate of the dynamics~\eqref{eq:dyn-num-sim-2} corresponds to the minimization problem
\begin{equation}\label{eq:num-sim-bisect}
    \begin{aligned}
\min_{c\in\real, P \in \real^{3 \times 3}}\quad & c, \\
\text{s.t. } \quad & PD_i + D_i^\top P \preceq 2cP, \quad i \in \{1,2\},\\
& P = P^\top \succ 0,
\end{aligned}
\end{equation}
where $D_1$ corresponds to the Jacobian~\eqref{eq:jac-num-sim2} with $J = 1$ and $D_2$ corresponds to the Jacobian~\eqref{eq:jac-num-sim2} with $J = 0$. The problem~\eqref{eq:num-sim-bisect} can be solved using a bisection algorithm on $c$ as discussed in~\cite[Section~2.5.2]{FB:24-CTDS}. After running the bisection algorithm, we find that the dynamics~\eqref{eq:dyn-num-sim-2} are strongly infinitesimally contracting with respect to $\|\cdot\|_{P}$ with rate $c = 0.5625$ for suitably chosen $P = P^\top \succ 0$. Then the corresponding Lipschitz constant for the vector field is computed from $(\Theta,\|\cdot\|_2)$ to $(\real^3, \|\cdot\|_{P})$ and is approximately $\ell \approx 1.235$. From Theorem~\ref{thm:bound_time_varying_eq}, we know that the asymptotic tracking error as measured in the $\|\cdot\|_{P}$ norm is upper bounded by $\frac{\ell}{c^2}\omega \approx 0.781$ since $\|\dot{\theta}(t)\|_2 = \omega$ for all $t \geq 0$. In Figure~\ref{fig:ineq-constrained-bound} we plot $\left\|z(t)-z^{\star}(\theta(t))\right\|_{P}$ as well as the upper bound $\frac{\lf}{c^2}\omega$ to demonstrate the validity of our bound. 

\begin{remark}
    As in the previous example, we leverage the specific structure of the dynamics~\eqref{eq:dyn-num-sim-2} to yield sharper bounds on the contraction rate $c$ instead of using the nonlinear program~\eqref{eq:nonlinear-prog}. \oprocend
\end{remark}

\subsection{Hardware Experiment: Collision Avoidance with Online Control Barrier Functions}\label{sec:CBF}

\begin{figure*}[!tbh]
	\centering
	\begin{tabular}{cc}
		\includegraphics[align=c,width=0.47\linewidth, trim={4.5cm 0 4.5cm 3.5cm},clip]{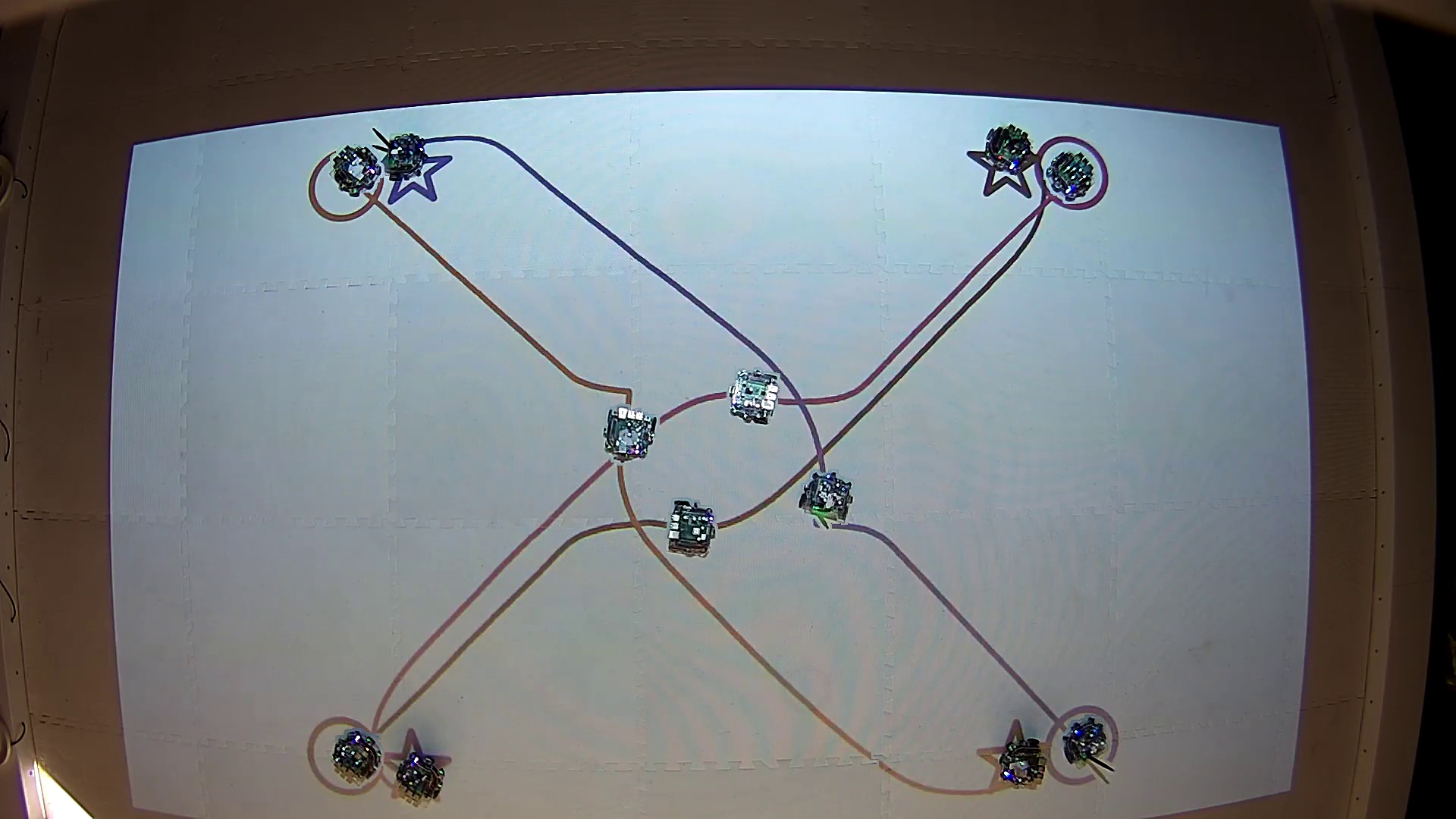} &
        \includegraphics[align=c,width=0.47\linewidth, trim={4.5cm 0 4.5cm 3.5cm},clip]{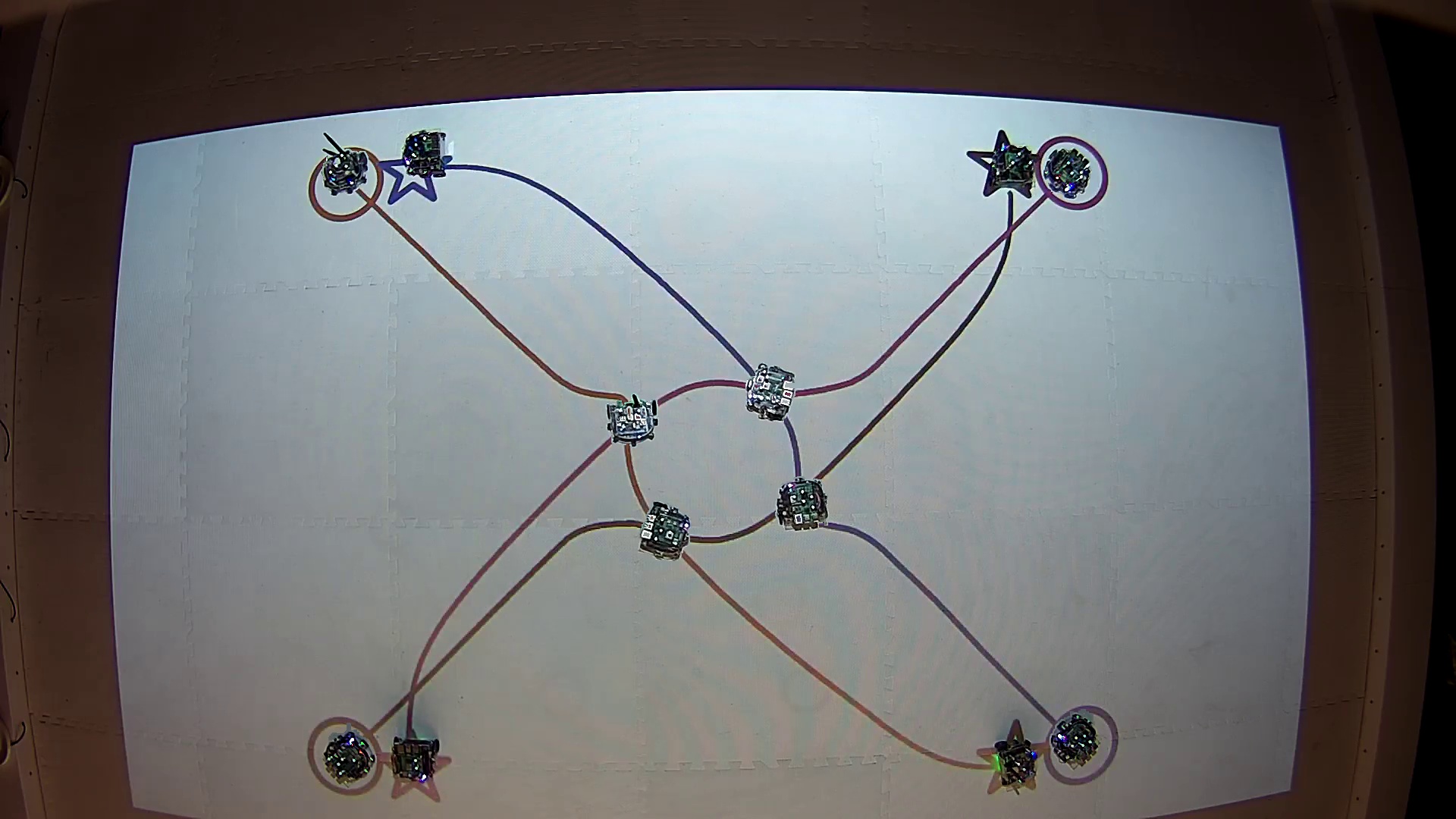} 
	\end{tabular}
 \caption{ \change{Overhead trajectories from the Robotarium experiments shown as
   photocomposites. The robots’ positions are shown at: the start of the
   maneuver (denoted by circles), the midpoint, and the end of the maneuver
   (denoted by stars). (Left) Robots executing the
   dynamics~\eqref{eq:ex3-no-feedforward} and (Right) robots executing the
   dynamics with feedforward prediction~\eqref{eq:cbf-feedforward}.  Videos
   are available at \url{https://bit.ly/TimeVaryingConvex}.}
 }\label{fig:cbf}
\end{figure*}

\begin{figure}
    \centering
    \includegraphics[width=0.9\linewidth]{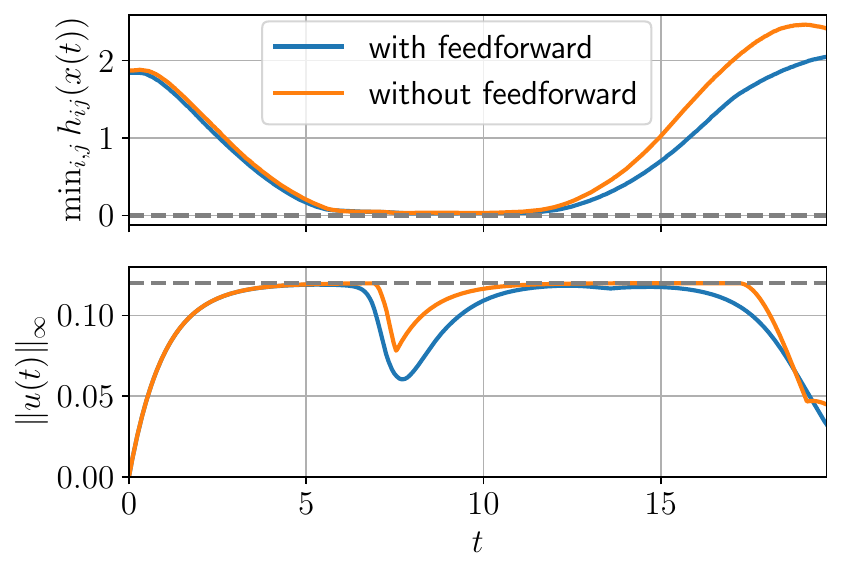}
    \caption{\change{The evolution of $\min_{i,j} h_{ij}(x(t))$ and $\|u(t)\|_{\infty}$ for the experiments in Figure~\ref{fig:cbf}. We plot $\overline{u} = 0.12$ [m/s] as a dashed line.}}\label{fig:cbf-metrics}
\end{figure}

\oldchange{We evaluate our equilibrium tracking framework in the context of control-barrier function (CBF)-based control synthesis~\cite{ADA-XX-JWG-PT:17}. To showcase our framework, we first recall the definition of a CBF.}

\oldchange{Suppose we are given a nonlinear control-affine system
\begin{equation}\label{eq:nonlinear-dyn}
    \dot{x} = F(x) + G(x)u,
\end{equation}
where $\map{F}{\real^n}{\real^n}$ and $\map{G}{\real^n}{\real^{n \times m}}$ are locally Lipschitz, {and $u \in \R^m$}. We let $\map{h}{\real^n}{\real}$ be sufficiently smooth and $\mcC := \setdef{x \in \real^n}{h(x) \geq 0}$ denote the \emph{safe set} for the system~\eqref{eq:nonlinear-dyn}.}
\begin{defn}[{Control Barrier Function~\cite[Definition~3]{ADA-XX-JWG-PT:17}}]
	\oldchange{The function $h$ is a control barrier function (CBF) for $\mcC$ if} \oldchange{there exists a locally Lipschitz and strictly increasing function $\map{\alpha}{\real}{\real}$ with $\alpha(0) = 0$ such that for all $x \in \mcC$, there exists $u \in \real^m$ with}
\begin{equation}\label{eq:CBF-condition}
		\oldchange{\nabla h(x)^\top F(x) + \nabla h(x)^\top G(x) u + \alpha(h(x)) \geq 0.}
	\end{equation}
\end{defn}
\oldchange{It is then known that a continuous feedback controller $\map{u}{\real^n}{\real^m}$ which satisfies~\eqref{eq:CBF-condition} for all $x \in \mathcal{D} \supset \mcC$, for an open set $\mathcal{D}$, renders $\mcC$ forward-invariant under the dynamics~\eqref{eq:nonlinear-dyn}~\cite{RK-ADA-SC:21}.}

\oldchange{If $\mcC = \cap_{i=1}^k \mcC_i$ and $\mcC_i := \setdef{x \in \real^n}{h_i(x) \geq 0}$, a common approach to synthesize controllers which yield $\mcC$ forward-invariant is to solve a parametric quadratic program}
\oldchange{\begin{equation}\label{eq:num-sim-3}
    \begin{aligned}
\argmin_{u\in\real^m}\quad & \frac{1}{2}\|u - \subscr{u}{nom}(x)\|_2^2, \\
\text{s.t. } \quad & a_i(x)^\top u \leq b_i(x), \quad i \in \until{k} \\
& \|u\|_\infty \leq \overline{u},
\end{aligned}
\end{equation}
where $a_i(x) = -G(x)^\top\nabla h_i(x)$ and $b_i(x) = \nabla h_i(x)^\top F(x) + \alpha(h_i(x))$ for all $i \in \until{k}$, \change{$\map{\subscr{u}{nom}}{\real^n}{\real^m}$ is a nominal feedback controller}, and $\|u\|_\infty \leq \overline{u}$, \change{where $\overline{u} > 0$,} captures actuator constraints.}

\oldchange{The complexity of the above approach arises in the computational burden of solving~\eqref{eq:num-sim-3} at each $x(t)$. To ameliorate this burden, we propose leveraging the equilibrium tracking approach, as was done in~\cite{ZM-FB-AGA:23r}. First, we relax the $k$ inequality constraints with log-barrier penalties:
\begin{align}
\argmin_{u\in\real^m}\;\; & \frac{1}{2}\|u - \subscr{u}{nom}(x)\|_2^2 \change{\;-\;} \eta(t)\sum_{i=1}^k \log(b_i(x) - a_i(x)^\top u), \nonumber \\
\text{s.t. } \;\; & \|u\|_\infty \leq \overline{u},\label{eq:log-barrier}
\end{align}
where $\map{\eta}{\real}{\realnonnegative}$ is a smooth function with $\lim_{t \to \infty} \eta(t) = 0$ which relaxes the log barriers over time. Note that we do not add additional log barrier terms for the actuator constraint since doing so would add $2m$ additional terms to the objective and would result in overly conservative controls initially. }

\oldchange{Let $\map{f}{\real^m \times \real^n \times \real}{\real}$ denote the objective function of~\eqref{eq:log-barrier}, that is, $f(u,x,\eta) = \frac{1}{2}\|u - \subscr{u}{nom}(x)\|_2^2 \change{\;-\;} \eta\sum_{i=1}^k \log(b_i(x) - a_i(x)^\top u)$. Note that both $x$ and $\eta$ are time-varying parameters in this optimization problem. For $\mcU = \setdef{u \in \real^m}{\|u\|_{\infty} \leq \overline{u}}$, we leverage the proximal gradient dynamics~\eqref{eq:prox-grad-dynamics} to track the solution $u^\star(x(t),\eta(t))$ of~\eqref{eq:log-barrier}:
\begin{equation}\label{eq:ex3-no-feedforward}
    \dot{u}(t) = -u(t) + \proj_{\mcU}\bigl(u(t) - \gamma \nabla_u f\bigr(u(t),x(t),\eta(t) \bigr) \bigr).
\end{equation}}
\change{To ensure contraction of the dynamics~\eqref{eq:ex3-no-feedforward}, we need to make two technical assumptions. First, we assume that there exist $\delta, \gamma_c > 0$ such that for all $\gamma \leq \gamma_c$, the set $\setdef{(u,x) \in \real^{n} \times \real^m}{a_i(x)^\top u \leq b_i(x) - \delta, \text{ for all } i}$ is forward-invariant for the coupled dynamics~\eqref{eq:nonlinear-dyn} and~\eqref{eq:ex3-no-feedforward}.\footnote{\change{This first assumption is reminiscient of a strengthened version of the CBF condition~\eqref{eq:CBF-condition} as studied, for example, in~\cite{RK-ADA-SC:21}.}} Secondly, we assume that each $a_i$ is bounded on this set. Under these technical assumptions, on this forward-invariant set, $f(\cdot,x,\eta)$ is strongly convex and strongly smooth. Then, in light of Theorems~\ref{thm:bound_time_varying_eq} and~\ref{thm:contractivity-forwardbackward}, we can find suitable $\gamma \in {]0,\gamma_c]}$ such that the dynamics~\eqref{eq:ex3-no-feedforward} are contracting with respect to $\|\cdot\|_2$ and will track the optimal $u^\star(x(t),\eta(t))$ solving~\eqref{eq:log-barrier} with some error depending on $\dot{x}(t)$ and $\dot{\eta}(t)$.}
\begin{arxiv}
This is the approach that is taken in~\cite{ZM-FB-AGA:23r}, albeit with the primal dual flow on the proximal augmented Lagrangian instead of the proximal gradient flow.
In contrast, we will leverage Theorem~\ref{thm:exact=tracking} to minimize tracking error.
\end{arxiv}
To apply Theorem~\ref{thm:exact=tracking}, we first replace $\proj_{\mcU}$ with a smooth approximation, $\map{\Sigma}{\real^m}{\real^m}$ satisfying $\Sigma(z) \in \mcU$ and $0 \preceq \jac{\Sigma}(z) \preceq I_n$ for all $z$.
Then we consider the dynamics {with feedforward prediction}, omitting the dependencies on time 
\begin{align}
  &\dot{u} = -u + \Sigma(y) \label{eq:cbf-feedforward} \\ &{+}
  \bigl( I_{{m}} - \jac{\Sigma}(y)(I_{{m}} - \gamma \nabla_u^2 f(u,\theta))\bigr)^{-1}\gamma\jac{\Sigma}(y)\frac{\partial \nabla_u f}{\partial \theta}(u,\theta) \dot{\theta}, \nonumber
\end{align}
where we have used the shorthands $y = u - \gamma \nabla_u f(u,\theta)$ and $\theta = (x, \eta)$. 
\begin{tac}
\change{The incremental ISS property of contracting dynamics implies that all trajectories of~\eqref{eq:cbf-feedforward} converge exponentially quickly to a small tube centered at the time-varying equilibrium trajectory of~\eqref{eq:ex3-no-feedforward}.}\footnote{\change{We refer to our technical report for more details: \url{https://arxiv.org/abs/2305.15595}.}} 
\end{tac}
\begin{arxiv}
    We can show that all trajectories of~\eqref{eq:cbf-feedforward} converge exponentially quickly to a small tube centered at the time-varying equilibrium trajectory of~\eqref{eq:ex3-no-feedforward}. To see this rigorously, suppose $\|\Sigma(z) - \proj_{\mcU}(z)\|_2 \leq \varepsilon$ for all $z$. Intuitively, for any $\varepsilon > 0$, such a $\Sigma$ exists by ``smoothing" $\proj_{\mcU}$ at the points of nonsmoothness. Then, letting $\theta = (x,\eta)$, at fixed $\theta$, we can consider the following two systems:

\begin{align}
    \dot{u}_1 &= -u_1 + \proj_{\mcU}(u_1 - \gamma \nabla_u f(u_1,\theta))\label{eq:nominal} \\
    \dot{u}_2 &= -u_2 + \proj_{\mcU}(u_2 - \gamma \nabla_u f(u_2,\theta)) + d(u_2,\theta),\label{eq:smoothed}
\end{align}
where $d(u_2,\theta) = \Sigma(y) - \proj_\mcU(y)$ and $y = u_2 - \gamma \nabla_u f(u_2,\theta)$. In other words, the smoothed dynamics~\eqref{eq:smoothed} are a perturbed version of the nominal, nonsmooth, dynamics~\eqref{eq:nominal}. We know that both of these dynamics are contracting in view of Theorem 2 and, therefore, admit unique equilibria at each $\theta$. Let $u_1^\star(\theta), u_2^\star(\theta)$ denote these parametrized equilibria. From~\cite[Theorem~38(ii)]{AD-SJ-FB:20o}, if $c - \varepsilon > 0$, we know that the following bound holds:
\begin{equation}
    \|u_1^\star(\theta) - u_2^\star(\theta)\|_2 \leq \frac{\|d(u_2^\star,\theta)\|_2}{c - \varepsilon} \leq \frac{\varepsilon}{c - \varepsilon}.
\end{equation}
In other words, as $\varepsilon \to 0$, then $\|u_1^\star(\theta) - u_2^\star(\theta)\|_2 \to 0$. 

Extending to the case of time-varying $\theta$, we have that, for all $t$, $\|u_1^\star(\theta(t)) - u_2^\star(\theta(t))\|_2 \leq \frac{\varepsilon}{c - \varepsilon}.$
Namely, the time-varying equilibrium trajectory of \eqref{eq:smoothed} lies in a small tube around the time-varying equilibrium trajectory of~\eqref{eq:nominal}. Then since~\eqref{eq:smoothed} has smooth dynamics, we may add a feedforward term to~\eqref{eq:smoothed} so that all trajectories converge to $u_2^\star(\theta(t))$ exponentially fast. And since $u_2^\star(\theta(t))$ lies inside an arbitrarily small tube around $u_1^\star(\theta(t))$, we can achieve arbitrarily close tracking depending on how we tune $\varepsilon$.
\end{arxiv}
{Recall that $\dot{x}$ follows the dynamics~\eqref{eq:nonlinear-dyn} and that since $\eta$ is a design choice, $\dot{\eta}$ is known as well, which makes these dynamics well-posed.}

\oldchange{We now 
{provide} a concrete example. 
Consider a team of $n$ single-integrator robots, $\dot{x}_i = u_i$, in $\real^2$ attempting to avoid collisions with one another while driving from an initial position to a final one.
\change{Let $x = (x_1,\dots,x_n) \in \real^{2n}$ and $u = (u_1,\dots,u_n) \in \real^{2n}$ denote the concatenation of their states and controls, respectively.}
There are $n(n-1)/2$ CBFs, given by $h_{ij}(x) = \|x_i - x_j\|_2^2 - d^2,$ for all $i,j \in \until{n}, i \neq j$ and $d > 0$ is some safety distance between agents. The constraints induced by these CBFs are $-\nabla h_{ij}(x)^\top u \leq \alpha(h_{ij}(x))$.}

\change{We implement our proposed controllers in the Robotarium~\cite{SW-PG-LW-SM-GN-MM-ME:20}, a remotely accessible multi-robot testbed at the Georgia Institute of Technology. We consider $n = 4$ robots and evolve the dynamics~\eqref{eq:ex3-no-feedforward} and~\eqref{eq:cbf-feedforward} as the controllers for the robots. We choose $\eta(t) = \e^{-0.3t}, \gamma = 0.5, \alpha(r) = 3r, d = 0.182$ [m], and as nominal control, we take ${\subscr{u}{nom}}_i(x_i) = \subscr{x}{des,i} - x_i$, where $\subscr{x}{des,i} \in \real^2$ denotes the desired final position for robot $i$. To comply with actuation limits in the Robotarium, we take $\overline{u} = 0.12$ [m/s]. For the dynamics~\eqref{eq:cbf-feedforward}, we take $\Sigma(z) = \overline{u}\tanh(z/\overline{u})$ as a smooth approximation for $\proj_{\mcU}$.}
\change{The results of the experiment are shown in Figure~\ref{fig:cbf} and the evolution of $\min_{i,j} h_{ij}(x)$ and $\|u\|_{\infty}$ are shown in Figure~\ref{fig:cbf-metrics}.}

We can see that both control strategies result in safe execution where each $h_{ij}$ is nonnegative for all~$t$ and yield controls that obey the actuation constraint $\|u(t)\|_{\infty} \leq \overline{u}$ for all~$t$. Note that without the feedforward term, the robots overshoot their target destination before eventually converging to the final location while this does not happen when there is a feedforward correction, \change{see Figure~\ref{fig:cbf-metrics}}. This overshoot is due to the lack of knowledge that the log barrier penalty coefficient $\eta(t)$ is decaying to zero. In contrast, the feedforward term allows the robots to account for its exponential decay to zero and thus they avoid overshooting their goal.

\section{Discussion}
In this article, we take a contraction theory approach to the problem of tracking optimal trajectories in time-varying convex optimization problems. We prove in Theorem~\ref{thm:bound_time_varying_eq} that the tracking error between any solution trajectory of a strongly infinitesimally contracting system and its equilibrium trajectory is upper bounded with an explicit estimate on the bound. \oldchange{We additionally prove in Theorem~\ref{thm:exact=tracking} that any strongly infinitesimally contracting system can be augmented with a feedforward term to ensure that the tracking error converges to zero exponentially quickly.} To apply \oldchange{these theorems}, we establish the strong infinitesimal contractivity of three dynamical systems solving optimization problems and apply Theorem~\ref{thm:bound_time_varying_eq} to provide explicit tracking error bounds. We validate these bounds in two numerical examples \change{and present a novel application to CBF-based control}.

We believe that this work motivates future research in establishing the strong infinitesimal contractivity of dynamical systems solving optimization problems or performing more general computation due to the desirable consequences of contractivity. As future research, we plan to investigate (i) discretization of parameter-varying contracting dynamics and establish similar tracking error bounds for discrete-time contracting systems~\cite{GR-FW:22}, (ii) contractivity properties of continuous-time stochastic optimization algorithms based on stochastic differential equations~\cite{ZA:22}, and (iii) \change{time-varying} nonconvex optimization problems with isolated local minima~\change{\cite{YD-JL-MA:23}, possibly} using the theory of $k$-contraction~\cite{CW-IK-MM:22}. \changeafter{Finally, we believe that a comprehensive comparison to methods based upon incremental quadratic constraints~\cite{LL-BR-AP:16} and dissipative systems theory~\cite{LL:22} could provide novel design insights.} 

\appendices
\section{Proofs and Additional Results}
\label{app:1:Proofs and Additional Results}
\change{We begin with a result on parametrized contractions.}
\begin{lemma}[Parametrized contractions]\label{lemma:parametrized-contractions}
  \oldchange{Consider the system~\eqref{eq:system_1} satisfying Assumptions~\ref{ass:1_} and~\ref{ass:2_}.} 
  Let $\map{\xstar}{\newparamset}{\mcX}$ denote the map given by $\xstar(\newparam) =
  x_{\newparam}^\star$. Then $\xstar(\cdot)$ is
  Lipschitz from $(\newparamset,\|\cdot\|_{\newparamset})$ to $(\mcX,\|\cdot\|_{\mcX})$ with constant $\ell_{\newparam}/c$.
\end{lemma}
\begin{proof}
  Consider the dynamics $\dot{x}=F(x,\change{\theta})$ {satisfying
    Assumptions~\ref{ass:1_} and~\ref{ass:2_}}, where
  $\change{\theta}$ is constant.  Given two constant inputs
  $\change{\theta_1}$ and $\change{\theta_2}$, the two
  equilibrium solutions are $\xstar(\change{\theta_1})$ and
  $\xstar(\change{\theta_2})$. The assumptions of 
  \cite[Theorem~3.16]{FB:24-CTDS} are satisfied with $c={-}\osL_x(F)$ and
  $\ell_{\change{\theta}}=\Lip_{\change{\theta}}(F)$, and the
  differential inequality~\cite[Equation~3.39]{FB:24-CTDS} implies
  \begin{equation*}
    0 \leq -c\norm{\xstar(\change{\theta_1})-\xstar(\change{\theta_2})}{\mcX}
    + \ell_{\change{\theta}} \norm{\change{\theta_1}-\change{\theta_2}}{\change{\Theta}}.
  \end{equation*}
  This concludes the proof.
\end{proof}

\change{Next, we study}
the Lipschitzness of parametrized time-varying equilibrium trajectories and \change{obtain} a bound on their time derivatives.
\begin{lemma}[Lipschitzness of parametrized curves]
	\label{lem:bound_xstar_dot}
	Consider $\mcX \subseteq \R^n$ and $\change{\Theta} \subseteq \R^d$ with associated norms $\map{\|\cdot\|_{\mcX}}{\R^n}{\R_{\geq 0}}$ and $\map{\|\cdot\|_{\change{\Theta}}}{\R^d}{\R_{\geq 0}}$, respectively.
	Let ${\map{G}{\change{\Theta}}{\mathcal{X}}}$ be Lipschitz from $(\change{\Theta},\|\cdot\|_{\change{\Theta}})$ to $(\mcX, \|\cdot\|_{\mcX})$ with constant $\Lip(G) \geq 0$. Then for every $a,b \in \real$ with $a < b$ and every continuously differentiable $\map{\change{\theta}}{{]a,b[}}{\change{\Theta}}$,
	\begin{enumerate}
		\item \label{statement_1}
		the curve $\map{x}{{]a,b[}}{\mcX}$ given by $x(t) = G(\change{\theta}(t))$ is locally Lipschitz;
		\item \label{statement_2}
		$\|\dot{x}(t)\|_{\mcX} \leq \Lip(G) \|\change{\dot{\theta}}(t)\|_{\change{\Theta}}$, for a.e. $t\in {]a,b[}$.
	\end{enumerate}
\end{lemma}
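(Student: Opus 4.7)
The plan is to exploit that $u$ is $C^1$ and hence locally Lipschitz on $]a,b[$, so that $G \circ u$ inherits local Lipschitzness via the global Lipschitz constant of $G$. The pointwise derivative bound will then follow by differentiating the composition a.e., which in one variable reduces to Lebesgue's theorem on differentiation of locally Lipschitz functions (or, equivalently, Rademacher's theorem).

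For item~\ref{statement_1}, I would fix an arbitrary compact subinterval $[\alpha,\beta] \subset {]a,b[}$. Since $\dot{u}$ is continuous on the compact set $[\alpha,\beta]$, it attains a finite maximum $M := \max_{s \in [\alpha,\beta]} \|\dot{u}(s)\|_{\mcU}$. The fundamental theorem of calculus together with the triangle inequality yields $\|u(t_2) - u(t_1)\|_{\mcU} \leq M|t_2 - t_1|$ for all $t_1,t_2 \in [\alpha,\beta]$. Applying the Lipschitz bound on $G$ to the composition then gives
\begin{equation*}
\|x(t_2) - x(t_1)\|_{\mcX} = \|G(u(t_2)) - G(u(t_1))\|_{\mcX} \leq \Lip(G)\, M\, |t_2 - t_1|,
\end{equation*}
establishing local Lipschitzness of $x$ on $]a,b[$.

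For item~\ref{statement_2}, local Lipschitzness of the scalar-to-$\real^n$ curve $x$ implies a.e. differentiability (Rademacher's theorem, or equivalently the Lebesgue differentiation theorem for absolutely continuous functions). At any $t \in {]a,b[}$ where both $\dot{x}(t)$ and $\dot{u}(t)$ exist, I would start from the difference-quotient estimate
\begin{equation*}
\frac{\|x(t+h) - x(t)\|_{\mcX}}{|h|} \leq \Lip(G)\, \frac{\|u(t+h) - u(t)\|_{\mcU}}{|h|}, \quad h \neq 0,
\end{equation*}
which holds by the Lipschitzness of $G$. Passing to the limit $h \to 0$ and using continuity of the two norms yields $\|\dot{x}(t)\|_{\mcX} \leq \Lip(G)\, \|\dot{u}(t)\|_{\mcU}$, which is the desired bound.

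Honestly, there is no substantive obstacle here: the argument is a routine composition followed by a pointwise limit. The only mildly delicate point is citing Rademacher's theorem (or the corresponding scalar-variable result) to secure existence of $\dot{x}(t)$ almost everywhere; once that is in hand, the rest of the proof is immediate from the Lipschitz property of $G$ and the continuity of the norms.
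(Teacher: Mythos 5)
Your argument is correct and follows essentially the same route as the paper: item (i) via local Lipschitzness of $C^1$ maps composed with the Lipschitz map $G$, and item (ii) via Rademacher's theorem plus a difference-quotient estimate passed to the limit using continuity of the norms. The only cosmetic difference is that you spell out the local Lipschitz bound for $u$ explicitly via the fundamental theorem of calculus, whereas the paper simply cites the standard fact.
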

\begin{proof}
	Item~\ref{statement_1} is a consequence of the fact that continuously differentiable mappings are locally Lipschitz and that a composition of Lipschitz mappings is Lipschitz.
	
	To prove item~\ref{statement_2} we first note that item~\ref{statement_1} implies that $\dot x(t)$ exists almost everywhere by Rademacher's theorem. Next, for all $t\in {]a,b[}$ for which $\dot{x}(t)$ exists, {we have}
	\begin{align*}
	\|\dot{x}(t)\|_\mathcal{X} &{:=} \Big\|\lim_{h \to 0^+}\!\!\! \frac{x(t+h){-}x(t)}{h}\Big\|_{\mcX}
    \!{=}\! \lim_{h \to 0^+}\!\! \frac{1}{h}\|x(t+h) {-}x(t)\|_{\mcX}\\ 
	&{\leq} \lim_{h \to 0^+}\frac{\Lip(G)}{h}\|\change{\theta}(t+h) {-}\change{\theta}(t)\|_{\change{\Theta}} \\
    &{=} \Lip(G)\Big\|\lim_{h \to 0^+}\!\!\!{\frac{\change{\theta}(t+h) {-}\change{\theta}(t)}{h}}\Big\|_{\change{\Theta}} {=} \Lip(G) \|\change{\dot \theta}(t)\|_{\change{\Theta}},
	\end{align*}
	where we have used the continuity of the norms $\|\cdot\|_{\mcX}$ and $\|\cdot\|_{\change{\Theta}}$ and the Lipschitzness of the map $G$.
\end{proof}

\begin{lemma}\label{lemma:discrete-to-continuous-contraction}
	Let $\map{\OT}{\real^n}{\real^n}$ be Lipschitz with respect to a norm $\|\cdot\|$ with constant $\Lip(\OT)$. Then the vector field $\map{F}{\real^n}{\real^n}$ defined by the dynamics
	\begin{equation}\label{eq:aux-dynamics}
	\dot{x} = -x + \OT(x) =: F(x)
	\end{equation}
	satisfies $\osL(F) \leq -1 + \Lip(\OT)$. Moreover, if $\Lip(\OT) < 1$, then $\OT$ has a unique fixed point, $x^*$, which is the unique equilibrium point of the contracting dynamics~\eqref{eq:aux-dynamics}.
\end{lemma}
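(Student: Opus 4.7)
The plan is to split the statement into two independent pieces and handle them in order. For the one-sided Lipschitz bound, I would write $F = -\Id + \OT$ and invoke subadditivity of $\osL$: in any norm on $\real^n$, the one-sided Lipschitz constant (equivalently the weak pairing / logarithmic norm when $\OT$ is differentiable) satisfies $\osL(G_1 + G_2) \leq \osL(G_1) + \osL(G_2)$ and $\osL(G) \leq \Lip(G)$. A direct two-line computation with the pairing $\inprod{\cdot}{\cdot}$ associated to $\|\cdot\|$ gives
\begin{align*}
\inprod{F(x) - F(y)}{x-y} &\leq -\|x-y\|^2 + \inprod{\OT(x) - \OT(y)}{x-y} \\
&\leq \bigl(-1 + \Lip(\OT)\bigr)\|x-y\|^2,
\end{align*}
using that $\inprod{-z}{z} = -\|z\|^2$ and the Cauchy--Schwarz-type bound $\inprod{a}{b} \leq \|a\|\,\|b\|$ valid for any weak pairing. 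This yields $\osL(F) \leq -1 + \Lip(\OT)$.

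For the second claim, I would assume $\Lip(\OT) < 1$ so that $\OT$ is a strict contraction on the complete metric space $(\real^n, \|\cdot\|)$. The Banach fixed point theorem then produces a unique $x^\star$ with $\OT(x^\star) = x^\star$, equivalently $F(x^\star) = -x^\star + \OT(x^\star) = \fzero_n$, so $x^\star$ is an equilibrium of~\eqref{eq:aux-dynamics}. Uniqueness of the equilibrium follows either by invoking \cite[Theorem~3.9]{FB:23-CTDS} (since $\osL(F) \leq -1 + \Lip(\OT) < 0$ makes $F$ strongly infinitesimally contracting) or directly: if $F(x_1) = F(x_2) = \fzero_n$, then $x_1 - x_2 = \OT(x_1) - \OT(x_2)$, so taking norms and using $\Lip(\OT) < 1$ forces $x_1 = x_2$.

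The argument is essentially routine; the only care needed is that the subadditivity $\osL(G_1 + G_2) \leq \osL(G_1) + \osL(G_2)$ and the bound $\osL(G) \leq \Lip(G)$ hold for the general (not necessarily inner-product) norm appearing in the statement. These are standard properties of weak pairings / logarithmic norms as recorded in \cite{FB:23-CTDS,AD-SJ-FB:20o}, so I would simply cite them rather than rederive the weak-pairing machinery. No step is a serious obstacle; the proof should occupy only a few lines once these two facts are quoted.
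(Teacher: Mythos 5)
Your proof is correct and takes essentially the same route as the paper: the paper cites the translation property $\osL(-\Id + \OT) = -1 + \osL(\OT)$ and the bound $\osL(\OT) \leq \Lip(\OT)$, while you unfold those same two facts into an explicit weak-pairing computation, and your Banach fixed-point argument (with the equivalence between equilibria of the flow and fixed points of $\OT$) matches the paper's.
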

\begin{proof}
  We have $\osL(F) := \osL(-\Id + \OT) = -1 + \osL(\OT) \leq -1 +
  \Lip(\OT)$, where we used the translation property of $\osL$ and the
  upper bound $\osL(\OT) \leq
  \Lip(\OT)$~\cite[Section~3.2.3]{FB:24-CTDS}. If $\Lip(\OT) < 1$, the
  Banach contraction theorem implies the existence of a unique fixed point
  and equilibrium points of~\eqref{eq:aux-dynamics} are exactly fixed
  points of $\OT$. Moreover, the dynamics~\eqref{eq:aux-dynamics} are
  contracting since $\osL(F) < 0$.
\end{proof}

\begin{lemma}[Symmetry and bounds on Jacobians]\label{lemma:symmetry-prox-Moreau}
    Let the map ${\map{g}{\real^n}{\realextended}}$ be CCP. Then for every $\gamma > 0$, $\jac{\prox{\gamma g}}(x)$ and $\Hess M_{\gamma g}(x)$ exist for a.e. $x \in \real^n$, are symmetric, and satisfy
    \begin{equation}\label{eq:bounds-Jacobians}
        0 \preceq \jac{\prox{\gamma g}}(x) \preceq I_n, \quad 0 \preceq \Hess M_{\gamma g}(x) \preceq \frac{1}{\gamma} I_n.
    \end{equation}
\end{lemma}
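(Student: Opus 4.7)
The plan is to derive both estimates from properties of the Moreau envelope $M_{\gamma g}$, using the elementary identity $\prox{\gamma g}(x) = x - \gamma \nabla M_{\gamma g}(x)$ obtained by rearranging~\eqref{eq:Moreau-gradient}. This way, one only needs to analyze $\Hess M_{\gamma g}$ and the claim on $\jac{\prox{\gamma g}}$ follows algebraically.

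First I would recall three standard facts about the Moreau envelope of a CCP function: (i) $M_{\gamma g}$ is convex (e.g., as the infimal convolution of two convex functions), (ii) $M_{\gamma g}$ is continuously differentiable everywhere with $\nabla M_{\gamma g}$ Lipschitz on $(\real^n,\|\cdot\|_2)$ with constant $1/\gamma$ (stated already right after~\eqref{eq:Moreau-gradient}), and (iii) by Rademacher's theorem, the Lipschitz map $\nabla M_{\gamma g}$ is differentiable for a.e.~$x\in\real^n$, so $\Hess M_{\gamma g}(x)$ exists a.e. Symmetry of $\Hess M_{\gamma g}(x)$ where it exists is then inherited from the fact that $\nabla M_{\gamma g}$ is a gradient field (equivalently, one invokes Alexandrov's theorem for the convex function $M_{\gamma g}$, which gives almost-everywhere second-order differentiability with symmetric Hessian).

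Next I would derive the two-sided bound. Convexity of $M_{\gamma g}$ gives $\Hess M_{\gamma g}(x)\succeq 0$ wherever the Hessian exists, and the $(1/\gamma)$-Lipschitzness of $\nabla M_{\gamma g}$ together with symmetry gives $\Hess M_{\gamma g}(x)\preceq \tfrac{1}{\gamma}I_n$; together these yield the second inequality in~\eqref{eq:bounds-Jacobians}. For the proximal operator, I differentiate the identity $\prox{\gamma g}(x)=x-\gamma\nabla M_{\gamma g}(x)$ at any $x$ where $\Hess M_{\gamma g}(x)$ exists, obtaining
\begin{equation*}
\jac{\prox{\gamma g}}(x) = I_n - \gamma\,\Hess M_{\gamma g}(x).
\end{equation*}
This expression shows $\jac{\prox{\gamma g}}(x)$ exists at the same points, is symmetric, and satisfies $0 \preceq \jac{\prox{\gamma g}}(x) \preceq I_n$ by plugging in the bounds just established on $\Hess M_{\gamma g}(x)$.

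The only nontrivial step is the symmetry claim: naively, Rademacher only yields the existence of $\jac{\nabla M_{\gamma g}}$ a.e. and not its symmetry. The cleanest resolution is to cite Alexandrov's theorem for the convex function $M_{\gamma g}$, which directly gives almost-everywhere existence of a symmetric positive semidefinite Hessian in the second-order Taylor sense and is consistent with the a.e.\ Jacobian of $\nabla M_{\gamma g}$. If one prefers to avoid Alexandrov, an alternative is to note that $\nabla M_{\gamma g}$, being the gradient of a $C^{1,1}$ function, has a symmetric weak Jacobian, and that the Rademacher a.e.\ classical Jacobian coincides with the weak one; this again yields symmetry. Either route completes the proof.
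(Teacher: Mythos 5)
Your proof is correct, and it reaches the inequalities by a slightly different route than the paper. The paper establishes the bounds $0\preceq \jac{\prox{\gamma g}}(x)\preceq I_n$ by invoking the fact that both $\prox{\gamma g}$ and $\Id-\prox{\gamma g}$ are firmly nonexpansive (citing \cite[Proposition~12.28]{HHB-PLC:17}), then transfers to $\Hess M_{\gamma g}$ via the identity $\jac{\prox{\gamma g}}(x)=I_n-\gamma\Hess M_{\gamma g}(x)$. You instead start from the Moreau envelope itself: convexity of $M_{\gamma g}$ gives $\Hess M_{\gamma g}\succeq 0$, the $(1/\gamma)$-Lipschitzness of $\nabla M_{\gamma g}$ together with symmetry gives the upper bound, and then the same algebraic identity gives the bound on $\jac{\prox{\gamma g}}$. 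Both arguments are elementary and essentially equivalent in power; yours leans on standard second-order properties of convex functions, while the paper's leans on operator-theoretic facts about resolvents. Your treatment of symmetry is actually more careful than the paper's: the paper asserts symmetry ``by symmetry of second derivatives,'' which strictly speaking requires some justification beyond a.e.\ classical differentiability of a Lipschitz vector field, whereas you explicitly note the gap and offer Alexandrov's theorem (or the weak-Jacobian argument for a $C^{1,1}$ potential) to close it. One minor observation: the paper's proof writes $\jac{\prox{\gamma g}}(x)=x-\alpha\Hess M_{\gamma g}(x)$, which contains a typographical slip; your $\jac{\prox{\gamma g}}(x)=I_n-\gamma\Hess M_{\gamma g}(x)$ is the intended identity.
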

\begin{proof}
    First note that $\jac{\prox{\gamma g}}(x)$ and $\Hess M_{\gamma g}(x)$ exist for a.e. $x \in \real^n$ by Rademacher's theorem since $\prox{\gamma g}$ and $\nabla M_{\gamma g}$ are both Lipschitz. Furthermore, $\Hess M_{\gamma g}(x)$ is symmetric for a.e. $x$ by symmetry of second derivatives. Analogously, $\jac{\prox{\gamma g}}(x) = \change{I_n - \gamma} \Hess M_{\gamma g}(x)$ by~\eqref{eq:Moreau-gradient} so we conclude symmetry of $\jac{\prox{\gamma g}}(x)$ as well. The bounds~\eqref{eq:bounds-Jacobians} are a consequence of the fact that $\prox{\gamma g}$ and $\Id - \prox{\gamma g}$ are both firmly nonexpansive~\cite[Proposition~12.28]{HHB-PLC:17}.
\end{proof}

\newcommand{\aeps}{\alpha_{\varepsilon}}
\newcommand{\ceps}{c_{\varepsilon}}
\newcommand{\Peps}{P_{\varepsilon}}
\section{Logarithmic norm of Hurwitz saddle  matrices}
\label{app:2:Logarithmic norm of Hurwitz saddle  matrices}

In this section, we provide bounds on log norms for the saddle matrices arising from the Jacobians of the dynamics~\eqref{eq:primal-dual} and~\eqref{eq:prox-aug-primal-dual}. 

\begin{lemma}[Logarithmic norm of Hurwitz saddle  matrices]\label{lemma:saddle-matrices}
 Given $B=B^\top\in\real^{n \times n}$ and $A\in\real^{m \times n}$, with
 $m\leq n$, we consider the \emph{saddle matrix}
 \begin{equation}
   \mcB = \begin{bmatrix}
     -B & -A^\top \\
     A & 0
   \end{bmatrix} \in \real^{(m+n)\times (m+n)}.
 \end{equation}  
 Then, for each matrix pair $(B,A)$ satisfying $\bmin I_n\preceq B
  \preceq \bmax I_n$ and $\amin I_m \preceq A A^\top \preceq \amax
  I_m$, for $\bmin, \bmax, \amin,\amax \in \realpositive$, the following contractivity LMI holds:
  \begin{equation}
    \mcB^\top P + P\mcB\preceq -2 c P \quad\iff\quad \lognorm{\mcB}{P}
    \leq - c,
  \end{equation}
  where
  \begin{align}\label{def:P-alpha+c}
    P &= \begin{bmatrix}
      I_n & \alpha A^\top \\ \alpha A & 
      I_m \end{bmatrix} \succ 0, \ 
    \alpha=\frac{1}{2}\min\Big\{\frac{1}{\bmax},\frac{\bmin}{\amax}\Big\}, \
    \text{and} \\
    c&=\frac{1}{2}\alpha\amin = 
    \frac{1}{4}\min\Big\{\frac{\amin}{\bmax},\frac{\amin}{\amax}\bmin\Big\}.
    \end{align}
\end{lemma}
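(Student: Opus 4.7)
The plan is to verify the Lyapunov-type LMI $\mathcal{B}^\top P + P\mathcal{B} + 2cP \preceq 0$ directly, since this is equivalent to $\mu_P(\mathcal{B}) \leq -c$. As a preliminary step I would confirm $P \succ 0$ via the Schur complement: positive definiteness reduces to $I_n - \alpha^2 A^\top A \succ 0$, i.e., $\alpha\sqrt{\amax} < 1$. Multiplying the two bounds $\alpha \leq 1/(3\bmax)$ and $\alpha \leq \bmin/(3\amax)$ gives $\alpha^2 \leq \bmin/(9\bmax\amax) \leq 1/(9\amax)$, so $\alpha \leq 1/(3\sqrt{\amax})$ and indeed $P \succ 0$.

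The next step is to compute
\[
\mathcal{B}^\top P + P\mathcal{B} + 2cP = \begin{bmatrix} -2(B - \alpha A^\top A - cI_n) & -\alpha(B - 2cI_n)A^\top \\ -\alpha A(B - 2cI_n) & -2(\alpha AA^\top - cI_m) \end{bmatrix},
\]
where the clean factoring $-\alpha(B-2cI_n)A^\top$ of the off-diagonal block is the ``appropriate matrix factorization'' alluded to in the remark. Testing on a vector $v = (x,y)$, the cross term equals $-2\alpha x^\top (B - 2cI_n) A^\top y$, which I would bound via Young's inequality with parameter $\varkappa > 0$ by $\alpha\varkappa \|(B - 2cI_n)x\|_2^2 + (\alpha/\varkappa)\|A^\top y\|_2^2$. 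Using $\|A^\top y\|_2^2 \geq \amin\|y\|_2^2$, the coefficient of $\|y\|_2^2$ becomes $(1/\varkappa - 2)\alpha\amin + 2c$. The sharper choice $\varkappa = 3$ (as opposed to $\varkappa = 2$ implicit in the proof of~\cite{GQ-NL:19}) makes this term vanish exactly when $c = 5\alpha\amin/6$, yielding the improved rate.

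What remains is the scalar-block matrix inequality
\[
2B - 2\alpha A^\top A - 2cI_n - 3\alpha(B - 2cI_n)^2 \succeq 0,
\]
and this is where the ``less conservative bounding of the square of a difference of matrices'' enters. Rather than crudely estimating $(B - 2cI_n)^2 \preceq (\bmax - 2c)^2 I_n$, I would apply functional calculus to the concave scalar polynomial $\Phi(\lambda) := 2\lambda - 2c - 3\alpha(\lambda - 2c)^2$. Its vertex sits at $\lambda^\star = 2c + 1/(3\alpha) \geq \bmax$ (since $\alpha \leq 1/(3\bmax)$), so $\Phi$ is monotonically increasing on $[\bmin,\bmax]$ and consequently $\Phi(B) \succeq \Phi(\bmin) I_n$. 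Combined with $-2\alpha A^\top A \succeq -2\alpha\amax I_n$, the matrix inequality reduces to the scalar condition $\Phi(\bmin) \geq 2\alpha\amax$. I expect the main obstacle to be precisely this final verification: using $\alpha\amax \leq \bmin/3$ and $3\alpha \leq 1/\bmin$, it rearranges to the quadratic inequality $4c^2 - 2c\bmin - \bmin^2/3 \leq 0$ in $c$, whose positive root is $\bmin(1 + \sqrt{7/3})/4 \approx 0.63\,\bmin$; since $c = 5\alpha\amin/6 \leq 5\bmin/18 \approx 0.28\,\bmin$ lies comfortably below this root, the inequality holds and closes the argument.
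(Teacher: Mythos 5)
Your proof is correct, and its overall skeleton matches the paper's: both first verify $P\succ 0$ by Schur complement (getting the sufficient condition $\alpha^2 \leq 1/(9\amax)$), both exploit the off-diagonal factorization $-\alpha(B-2cI_n)A^\top$, and both reduce the LMI to the single $n\times n$ matrix inequality
\[
2B - 2\alpha A^\top A - 2cI_n - 3\alpha(B-2cI_n)^2 \succeq 0.
\]
Your Young's-inequality step with $\varkappa = 3$ is the quadratic-form twin of the paper's argument, which instead lower-bounds the $(2,2)$ block of $Q := -(\mathcal{B}^\top P + P\mathcal{B} + 2cP)$ by $\tfrac{1}{3}\alpha A A^\top$, factors out $\operatorname{diag}(I_n, A)$, and takes a Schur complement of the $\tfrac{1}{3}\alpha I_n$ block; the effective ``$\varkappa$'' is the same.

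The genuine divergence is in the last step. The paper verifies the $n\times n$ inequality by splitting it into the two matrix inequalities $2B - 3\alpha B^2 \succeq 2(\alpha A^\top A + cI_n)$ and $12\alpha c B \succeq 12\alpha c^2 I_n$, bounding each side of the first by $\bmin I_n$ using $B^2 \preceq \bmax B$ and $\alpha(2\amax + \amin) \leq \bmin$. You instead apply spectral calculus to the concave polynomial $\Phi(\lambda) = 2\lambda - 2c - 3\alpha(\lambda - 2c)^2$: its vertex $\lambda^\star = 2c + 1/(3\alpha) \geq \bmax$, so $\Phi$ is increasing on $[\bmin,\bmax]$, whence $\Phi(B) \succeq \Phi(\bmin)I_n$, and the scalar check $\Phi(\bmin) \geq 2\alpha\amax$ collapses (after weakening $3\alpha \leq 1/\bmin$ and $\alpha\amax \leq \bmin/3$) to the quadratic $4c^2 - 2c\bmin - \bmin^2/3 \leq 0$ in $c$, which holds with slack since $c \leq 5\bmin/18$. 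Your route is arguably cleaner — it avoids the paper's ad hoc split and makes the slack in the choice of $c$ explicit — at the cost of invoking monotone spectral calculus rather than staying purely with quadratic matrix bounds. Both deliver exactly the claimed $(\alpha, c)$.
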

\begin{arxiv}
\begin{proof}
    We start by verifying that $P\succ0$. Using the Schur complement of the
    $(2,2)$ entry, we need to verify that
    \begin{align*}
      I_n - \alpha^2 A^\top A\succ 0 \iff  1 -\alpha^2 \amax >0
      \iff  \alpha^2 < 1/\amax.
    \end{align*}    
    The inequality $\alpha^2 < 1/\amax$ follows from the tighter inequality
    \oldchange{$(2\alpha)^2 \leq \displaystyle \tfrac{1}{\amax}$} which is proved as
    follows:
    \begin{align*}
      \min\Big\{\frac{1}{\bmax},\frac{\bmin}{\amax}\Big\}^2&\leq
      \min\Big\{\frac{1}{\bmax},\frac{\bmin}{\amax}\Big\} \cdot
      \max\Big\{\frac{1}{\bmax},\frac{\bmin}{\amax}\Big\}\\
      &= \frac{1}{\bmax} \cdot \frac{\bmin}{\amax} \leq \frac{1}{\amax}.
    \end{align*}
    Next, we aim to show that $Q := {-}\mcB^\top P - P\mcB -2 c P\succeq
    0$. After some bookkeeping, we compute
    \begin{align*}
    Q =  
    \begin{bmatrix}
       2 B {-} 2 \alpha  A^\top A    {-} 2 c I_n
       &
       \alpha B A^\top {-} 2 c \alpha A^\top
        \\
        A {+} \alpha AB {-}  A
       {-} 2 c \alpha A
        &  2 \alpha A A^\top       {-} 2 c I_m
    \end{bmatrix} .
    \end{align*}
    The (2,2) block satisfies the lower bound 
    \oldchange{
    \begin{align*}
      2 \alpha A A^\top {-} 2 c I_m &=
      2 \big(\tfrac{1}{2} \alpha A A^\top {-} c I_m\big) + \alpha A A^\top  \\
      & \!\!\succeq 2\big(\tfrac{1}{2}\alpha\amin {-} c \big)I_m   + \alpha A A^\top 
       {=}  \alpha A A^\top\succ 0.
    \end{align*}
    }
    Given this lower bound, we can factorize the resulting matrix as follows:
    \begin{multline*}
     Q = {-}\mcB^\top P - P\mcB -2 c P 
     \\
     \succeq
     \begin{bmatrix} I_n & \!\! 0 \\ 0 & \!\! A      \end{bmatrix}
     \underbrace{\begin{bmatrix}
       2 B {-} 2 (\alpha  A^\top A + c I_n)
       &
       \!\alpha B  {-} 2 c \alpha I_n
        \\
        \alpha B {-} 2 c \alpha I_n
        & \!\alpha I_n 
\end{bmatrix}}_{n\times n}
\begin{bmatrix} I_n & \!\! 0 \\ 0 & \!\! A^\top      \end{bmatrix}.
\end{multline*}
Since $\alpha I_n \succ 0$, it now suffices to show that the
Schur complement of the (2,2) block of $n\times n$ matrix is positive
semidefinite. We proceed as follows:
\oldchange{
   \begin{align*}      
     & 2  B - 2  (\alpha  A^\top A + c I_n) -
      \alpha \big(  B  - 2 c  I_n\big)^2 \succeq 0
     \\
     & \!\!\! \iff  \!\!  2B  -   \alpha B^2 + 4 \alpha c B \succeq
     2 (\alpha  A^\top A + c I_n) + 4 \alpha c^2 I_n
     \\
     &\!\!\! \impliedby 2B  - \alpha B^2 \succeq 2 (\alpha  A^\top A + c I_n)
     \ \text{and} \ 4 \alpha c B \succeq 4 \alpha c^2 I_n.
   \end{align*}
   To prove $2B - \alpha B^2 \succeq 2 (\alpha  A^\top A + c I_n)$,
   we upper bound the right hand side as follows:
   \begin{align*}
     2 (\alpha A^\top A +  c I_n) &\overset{\eqref{def:P-alpha+c}}{\preceq}
     \alpha (2\amax + \amin)I_n \\
     & \!\!\!\!\!\!\!\!\! \overset{\alpha\leq\tfrac{1}{2}\bmin/\amax}{\preceq}
     \frac{1}{2} \frac{\bmin}{\amax} (2\amax + \amin)I_n \preceq \frac{3}{2}\bmin I_n.
   \end{align*}
   }
   Next, since \oldchange{$\alpha\leq\frac{1}{2\bmax}$}, we know \oldchange{$- \alpha\bmax \geq
   -\tfrac{1}{2}$}. We then upper bound the left hand side as follows:
   \oldchange{
   \begin{align*}
     2B - \alpha B^2 &\succeq   2B -  \alpha\bmax B
     \succeq (2 - \tfrac{1}{2}) B \succeq \tfrac{3}{2}\bmin I_n.
   \end{align*}
   }
   Finally, the inequality \oldchange{$4 \alpha c B \succeq 4 \alpha c^2 I_n$} follows from noting \oldchange{$c\leq \tfrac{1}{4} \tfrac{\amin}{\amax} \bmin < \bmin$}.
\end{proof}
\end{arxiv}

\newcommand{\xmax}{x_{\max}}
\newcommand{\diag}{\mathrm{diag}}

\begin{arxiv}
The following lemma is presented in~\cite[Lemma~6]{GQ-NL:19}. We include it here for completeness.
\begin{lemma}\label{lemma:BoundQ22}
Let $X = X^\top \in \real^{m\times m}$ satisfy $0 \preceq X \preceq \xmax I_m$ for some $\xmax > 0$, $\gamma > 0$, and $\ds\alpha \leq \frac{\gamma}{\xmax}$. Then for all $d \in [0,1]^m$, the following inequality holds
\begin{equation}
\alpha(\diag(d) X+ X\diag(d)) + 2\gamma(I_m - \diag(d)) \succeq \frac{3}{2}\alpha X.
\end{equation}
\end{lemma}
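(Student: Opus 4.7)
My plan for Lemma~\ref{lemma:BoundQ22} is to reduce the inequality to a scalar-style AM-GM applied in the matrix setting. The first step is a change of variable. Set $E := I_m - \diag(d)$, so that $E$ is diagonal with entries in $[0,1]$. Writing $\diag(d) = I_m - E$ and expanding $\diag(d)X + X\diag(d) = 2X - EX - XE$, the target inequality becomes equivalent to
\begin{equation*}
\tfrac{1}{2}\alpha X + 2\gamma E \succeq \alpha(EX + XE).
\end{equation*}
This rewriting localizes the entire noncommutative contribution into the single symmetric combinator $EX + XE$, while both remaining terms on the left are positive semidefinite.

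The key technical step is to dominate $EX + XE$ by quantities that depend on $E$ and $X$ separately. Since $tE - t^{-1}X$ is symmetric for every $t>0$, its square is positive semidefinite, giving the matrix polarization bound
\begin{equation*}
EX + XE \preceq t^2 E^2 + t^{-2} X^2.
\end{equation*}
Combining this with the elementary inequalities $E^2 \preceq E$ (because $E$ is diagonal with entries in $[0,1]$) and $X^2 \preceq \xmax X$ (because $0 \preceq X \preceq \xmax I_m$, by simultaneous diagonalization), I obtain $\alpha(EX + XE) \preceq \alpha t^2 E + \alpha t^{-2}\xmax X$.

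It then remains to choose $t$ so that
\begin{equation*}
\bigl(\tfrac{1}{2}\alpha - \alpha t^{-2}\xmax\bigr) X + \bigl(2\gamma - \alpha t^2\bigr) E \succeq 0
\end{equation*}
holds. Picking $t^2 = 2\xmax$ kills the coefficient of $X$ exactly, and leaves the coefficient of $E$ equal to $2\gamma - 2\alpha\xmax \geq 0$, where the sign is guaranteed precisely by the hypothesis $\alpha \leq \gamma/\xmax$. Since $E \succeq 0$, the conclusion follows.

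The main obstacle I anticipate is choosing the right matrix AM-GM: a naive bound such as $EX + XE \preceq E^2 + X^2$ would lose the free parameter needed to absorb $\xmax$, and one cannot directly invoke scalar AM-GM because $E$ and $X$ do not commute. The polarization trick for the symmetric pencil $tE - t^{-1}X$ is what supplies the required tunable parameter, and the fact that the optimal choice $t^2 = 2\xmax$ makes the $X$-coefficient vanish exactly is what forces the constant $\tfrac{3}{2}$ on the right-hand side of the lemma to be the sharpest possible via this route.
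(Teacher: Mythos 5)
Your argument is correct. For this lemma the paper supplies no proof of its own — it simply writes ``See~\cite[Lemma~6]{GQ-NL:19}'' — so there is no in-paper derivation to compare against, and your write-up is a valid, self-contained substitute. The substitution $E := I_m - \diag(d)$ turns the claim into $\tfrac{1}{2}\alpha X + 2\gamma E \succeq \alpha(EX + XE)$; the polarization bound $EX + XE \preceq t^2 E^2 + t^{-2}X^2$ (obtained from $(tE - t^{-1}X)^2 \succeq 0$ for symmetric $E,X$) combined with $E^2 \preceq E$ and $X^2 \preceq \xmax X$ produces a tunable majorant; and the choice $t^2 = 2\xmax$ makes the $X$-coefficient vanish while leaving the $E$-coefficient $2(\gamma - \alpha\xmax)$, whose nonnegativity is exactly the hypothesis $\alpha \leq \gamma/\xmax$. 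This is a clean way to absorb the noncommutativity of $E$ and $X$; an equivalent route is the pointwise version (bound $v^\top E X v$ by Cauchy--Schwarz and then close the resulting scalar quadratic in $\sqrt{v^\top X v}$, $\sqrt{v^\top E v}$ with the same discriminant condition), but your matrix-level formulation is tidier. One small wording nit: the inequality $X^2 \preceq \xmax X$ is the spectral theorem applied to the single symmetric matrix $X$ (eigenvalues in $[0,\xmax]$) rather than a ``simultaneous diagonalization'' of two matrices.
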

\begin{proof}
    See~\cite[Lemma~6]{GQ-NL:19}.
\end{proof}
\end{arxiv}

The following lemma is a generalization of~\cite[Lemma~4]{GQ-NL:19}, where we let the matrix $G$ be dense. \change{The proof method is improved by introducing an auxiliary variable $\varkappa$ which we optimize for in~\eqref{eq:nonlinear-prog} whereas in the proof of~\cite[Lemma~4]{GQ-NL:19}, $\varkappa = 1$ is chosen.}

\begin{lemma}[Generalized saddle  matrices]\label{lemma:generalized-saddle}
 Given $A\in\real^{m \times n}$, ${B=B^\top\in\real^{n \times n}}$, and $G= \trasp{G} \in \real^{m\times m}$, with
 $m\leq n$, and $\gamma >0$, we consider the \emph{saddle matrix}
 \begin{equation}
   \mcB = 
    \begin{bmatrix}
        - B - \frac{1}{\gamma} \trasp{A}GA & - \trasp{A} G \\
        GA & -\gamma(I_m - G)
    \end{bmatrix} \in \real^{(m+n)\times (m+n)}.
 \end{equation}  
 Then, for each matrix triplet $(B,A,G)$ satisfying $\bmin I_n\preceq B \preceq \bmax I_n$, $\amin I_m \preceq A A^\top \preceq \amax I_m$, and $0 \preceq G \preceq I_m$, for $\bmin$, $\bmax$, $\amin$, $\amax \in \realpositive$, the following contractivity LMI holds:
  \begin{equation}
    \mcB^\top P + P\mcB\preceq -2 \cs P \quad\iff\quad \lognorm{\mcB}{P}
    \leq - \cs,
  \end{equation}
  where
  \begin{equation}\label{def:alpha+c}
    P = \begin{bmatrix}
      I_n & \as A^\top \\ \as A & 
      I_m \end{bmatrix} \succ 0, 
    \end{equation}
and $\cs$ and $\as$ are the optimal parameters for the problem~\eqref{eq:nonlinear-prog}.
\end{lemma}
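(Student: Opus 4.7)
The plan is to mirror the proof of Lemma~\ref{lemma:saddle-matrices}, with two new ingredients to handle the $G$-dependent block structure: Lemma~\ref{lemma:BoundQ22} will produce a $G$-free lower bound on the $(2,2)$ block of the dissipation matrix, and a Young's-style inequality with weight $\varkappa$ will split the quadratic cross terms arising after a Schur complement. First I would verify $P \succ 0$ via the Schur complement of its $(2,2)$ block, which reduces to $I_n - \alpha^2 A^\top A \succ 0$ and follows from $\alpha \leq 1/\sqrt{\amax} - \varepsilon$ in~\eqref{eq:alpha-constraint}.

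Next I would compute $Q := -\mcB^\top P - P\mcB - 2cP$ block by block. Using symmetry of $B$ and $G$, the blocks work out to
\begin{align*}
Q_{11} &= 2B + \bigl(\tfrac{2}{\gamma} - 2\alpha\bigr)A^\top GA - 2cI_n, \\
Q_{22} &= \alpha(AA^\top G + GAA^\top) + 2\gamma(I_m - G) - 2cI_m, \\
Q_{12} &= \alpha\bigl[(B - 2cI_n)A^\top + A^\top K\bigr],
\end{align*}
where $K := \gamma(I_m - G) + \tfrac{1}{\gamma}GAA^\top$. After an orthogonal change of coordinates on the $\lambda$-space that diagonalizes $G$ (leaving $AA^\top$ invariant), the constraint $\alpha \leq \gamma/\amax$ is exactly the hypothesis of Lemma~\ref{lemma:BoundQ22} and gives $Q_{22} \succeq \tfrac{3}{2}\alpha AA^\top - 2cI_m$. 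I would reserve a piece proportional to $\alpha AA^\top/\varkappa$ for the Schur complement, using constraints~\eqref{eq:constraint2} and~\eqref{eq:constraint3} to absorb the $-2cI_m$ term and leave a nonnegative remainder. Simultaneously, in $Q_{11}$ the indefinite term $\bigl(\tfrac{2}{\gamma} - 2\alpha\bigr)A^\top GA$ is favorable when $\alpha \leq 1/\gamma$ and otherwise bounded below by $-(2\alpha - 2/\gamma)\amax I_n$, producing the $\relu\bigl(2\alpha - 2/\gamma\bigr)\amax$ term of $h$ uniformly in $G$.

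With $Q_{22}$ bounded below by a scalar multiple of $AA^\top$, a Schur complement reduces $Q \succeq 0$ to an $n \times n$ LMI of the form $Q_{11} \succeq (\varkappa/\alpha)\,Q_{12}(\alpha AA^\top)^{-1}Q_{12}^\top$; the factor $1/\amin$ then emerges from $(AA^\top)^{-1} \preceq \amin^{-1}I_m$. Applying Young's inequality with weight $\varkappa$ to the sum $(B - 2cI_n)A^\top + A^\top K$ inside $Q_{12}(AA^\top)^{-1}Q_{12}^\top$ and bounding the resulting pieces by $\|B - 2cI_n + \tfrac{1}{\gamma}A^\top GA\| \leq \lf + \amax/\gamma + 2c$ and $\|K\| \leq \gamma + \amax/\gamma$ (after reallocating the $\tfrac{1}{\gamma}A^\top GA$ contribution between the $B$-block of $Q_{12}$ and $Q_{11}$) produces exactly the parenthesized sum $\gamma^2\tfrac{\amax}{\amin} + (\lf + \tfrac{\amax}{\gamma} + 2c)^2 + 2\gamma\tfrac{\amax}{\amin}(\lf + \tfrac{\amax}{\gamma} + 2c)$ appearing in $h$. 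Using $B \succeq \mf I_n$ then yields the sufficient condition $h(c,\alpha,\varkappa) \geq 0$ from constraint~\eqref{eq:nonlinear-constraint}.

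The main obstacle will be the precise bookkeeping: which portion of $Q_{22}$ is routed into the Schur complement versus retained as slack, and how the $\tfrac{1}{\gamma}A^\top GA$ contribution is reallocated between the $(1,1)$ and $(1,2)$ blocks so that Young's splitting produces exactly the bracketed sum in $h$. The $\amax/\amin$ ratio and the mixed cross term $\gamma(\lf + \amax/\gamma + 2c)$ are fingerprints of this bookkeeping rather than of any new estimate; once the terms are aligned, positivity of the Schur complement follows directly from the constraints of~\eqref{eq:nonlinear-prog}.
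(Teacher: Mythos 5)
Your proposal follows the paper's proof essentially step by step: compute $Q = -\mcB^\top P - P\mcB - 2cP$ block by block, verify $P \succ 0$, apply Lemma~\ref{lemma:BoundQ22} via the orthogonal diagonalization of $G$ to lower-bound $Q_{22}$, then take a Schur complement and bound $Q_{12}Q_{22}^{-1}Q_{12}^\top$ in operator norm, with the $\relu$ term emerging from the sign of $(2/\gamma - 2\alpha)$ in $Q_{11}$. Your block formulas and the overall bookkeeping plan are correct. One slip worth flagging: the paper does \emph{not} apply a Young's-type inequality with weight $\varkappa$ to the cross term inside $Q_{12}(AA^\top)^{-1}Q_{12}^\top$. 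The scalar $\varkappa$ is consumed entirely in establishing the lower bound on $Q_{22}$ (via constraints~\eqref{eq:constraint2}--\eqref{eq:constraint3}); after that, the paper decomposes $Q_{12}/\alpha^\star = R_1 A^\top + R_2$ with $R_1 = B + \tfrac{1}{\gamma}A^\top GA - 2c^\star I_n$ and $R_2 = \gamma A^\top (I_m - G)$, expands the quadratic, uses $A^\top (AA^\top)^{-1} A \preceq I_n$, and bounds the mixed term by the plain symmetrization $M + M^\top \preceq 2\|M\| I_n$, with no further free weight. If you re-introduce $\varkappa$ as a Young's weight on that cross term, the three pieces of the bracketed sum in $h(c,\alpha,\varkappa)$ would acquire unequal $\varkappa$-dependent coefficients and would not reproduce the given $h$. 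So the mechanism you should use is the direct symmetric norm bound, not Young's; with that correction your plan yields exactly the paper's $h$ and the conclusion then follows from~\eqref{eq:nonlinear-constraint}.
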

\begin{arxiv}
\begin{proof}
    Define the matrix $Q \in \real^{(m+n)\times (m+n)}$ by
    \begin{align}
        Q = - \trasp{\mcB} P - P \mcB - 2\cs P = \begin{bmatrix}
            Q_{11} & Q_{12} \\
            Q_{12}^\top & Q_{22}
        \end{bmatrix},
    \end{align}
    where $Q_{11}\in \real^{n \times n}$, $Q_{12}\in \real^{n\times m}$, and $Q_{22}\in \real^{m\times m}$. We aim to show that $Q\succeq 0$ for $\cs$ and $\as$ optimal parameters for the problem~\eqref{eq:nonlinear-prog}. We have
    \begin{align*}
        Q_{11} &= 2B + \Big(\frac{2}{\gamma} - 2\as  \Big)\trasp{A}GA   -  2\cs I_n, \\
        Q_{12} &=  \as\gamma\trasp{A}(I_m {-} G) 
    {+} \as B \trasp{A} {+} \frac{\as}{\gamma} \trasp{A}GA\trasp{A}  {-} 2\cs \as \trasp{A}, \\
    Q_{22} &= \as(A\trasp{A} G + GA\trasp{A}) + 2\gamma (I_m -G) - 2\cs I_m.
    \end{align*}
    To show that $Q\succeq 0$ we use the Schur Complement, which requires to prove that $Q_{22} \succ 0$ and $Q_{11} - Q_{12}Q_{22}^{-1}\trasp{Q_{12}} \succeq 0$. We do this in three steps. 
    
    First, we find a lower bound for $Q_{22}$. Since $G$ is symmetric and satisfies $0 \preceq G  \preceq I_m$, there exists an orthogonal matrix $U \in \real^{m \times m}$ and $d \in [0,1]^m$ such that $G = U\diag(d)U^\top$.
Substituting this into $Q_{22}$
and multiplying 
on the left and on the right by $U$ and $\trasp{U}$, respectively, we get
\begin{align}
\label{eq:U^TQ22U}
    \trasp{U}Q_{22} U &= 
    \as(\trasp{U}A\trasp{A} U\diag(d) + \diag(d)\trasp{U}A\trasp{A}U) \nonumber\\
    & \quad + 2\gamma (I_m -\diag(d)) - 2\cs I_m,
\end{align}
where we have used the fact that $U\in \R^{m\times m}$ orthogonal, i.e., ${U\trasp{U} = \trasp{U}{U} = I_m}$.
Moreover, the orthogonality of $U$ implies that ${U^{-1} = \trasp{U}}$. Thus the eigenvalues of $\trasp{U}A\trasp{A}U$ and $A\trasp{A} $ are equal, and therefore, $\amin I_m\preceq\trasp{U}A\trasp{A}U \preceq \amax I_m$. Next, applying Lemma~\ref{lemma:BoundQ22} to~\eqref{eq:U^TQ22U}, with $X := U^\top AA^\top U$, 
and, multiplying this on the left and on the right by $U$ and $\trasp{U}$, respectively, we get the following lower bound
\begin{align}
    Q_{22}\succeq 
    \frac{3}{2} \as A\trasp{A} - 2\cs I_m,
\end{align}
where the inequality holds because $\as \leq \frac{\gamma}{\amax}$ -- constraint~\eqref{eq:alpha-constraint}.
Finally, we note that $Q_{22} \succ 0$ for $\as$ and $\cs$ optimal parameters for the problem~\eqref{eq:nonlinear-prog}

Next, 
we need to prove that $Q_{11} - Q_{12}Q_{22}^{-1}\trasp{Q_{12}} \succeq 0$ for $ \cs$ and $\as$ optimal parameters for the problem~\eqref{eq:nonlinear-prog}.
To this purpose, first note that
for every $\kappa > \frac{2}{3}\frac{\amax}{\amin}\frac{1}{\as}$,
\begin{align}
\label{eq:bound_Q22}
Q_{22} \succeq \frac{1}{\kappa} A\trasp{A}.
\end{align}
Next, we upper bound $Q_{12}Q_{22}^{-1}\trasp{Q_{12}}$. To simplify notation, define 
$R_1 = B + \frac{1}{\gamma} \trasp{A}GA  - 2\cs I_n \in \R^{n\times n}$, $R_2 = \gamma\trasp{A}(I_m - G) \in \R^{n\times m}$, 
and note that $Q_{12} = \as(R_1 \trasp{A} + R_2) \in \R^{n\times m}$. We compute
\begin{align}
    &\!\!\!\!\!\!Q_{12}Q_{22}^{-1}\trasp{Q_{12}} 
    \!\!\overset{\eqref{eq:bound_Q22}}{\preceq} \kappa Q_{12}(A\trasp{A})^{-1} Q_{12} \nonumber\\
    &\preceq (\as)^2\kappa(R_1\trasp{A}(A\trasp{A})^{-1}  A\trasp{R}_1 {+} R_1\trasp{A}(A\trasp{A})^{-1}\trasp{R}_2\nonumber\\ 
    & \quad + R_2(A\trasp{A})^{-1}  A\trasp{R}_1 +
    R_2(A\trasp{A})^{-1}  \trasp{R}_2)\nonumber\\
    &\preceq (\as)^2\kappa(R_1\trasp{R}_1 + 2\norm{R_1\trasp{A}(A\trasp{A})^{-1}\trasp{R}_2}{}I_n\nonumber\\ 
    & \quad +\norm{R_2(A\trasp{A})^{-1}  \trasp{R}_2}{}I_n), \label{eq:boundQ12Q22Q12^T} 
\end{align}
where the final inequality holds because $\trasp{A}(A\trasp{A})^{-1} A \preceq I_n$.
Note that
\begin{align*}
    R_1\trasp{R}_1 
    &\preceq \bmax^2 I_n {+} \Big(2\bmax{+}\frac{\amax}{\gamma} {+} 2\cs\Big)\Big(\frac{\amax}{\gamma} {+} 2\cs\Big) I_n {=:} h_1(\cs) I_n,
\end{align*}
where we have introduced the function ${\map{h_1}{\R_{\geq 0}}{\R_{\geq 0}}}$ defined by $h_1(\cs) = \bmax^2 + 2\bmax(\frac{\amax}{\gamma} + 2\cs) + (\frac{\amax}{\gamma} + 2\cs)^2$.
Moreover,
\begin{align*}
    \norm{R_2(A\trasp{A})^{-1}  \trasp{R}_2}{} &\leq \gamma^2\frac{\amax}{\amin}, \text{ and }\\
    2\norm{R_1\trasp{A}(A\trasp{A})^{-1}\trasp{R}_2}{} &\leq 2\gamma\frac{\amax}{\amin} \Big(\bmax {+} \frac{\amax}{\gamma} {+} 2\cs \Big)=: h_2(\cs),
\end{align*}
where, we have introduced the function ${\map{h_2}{\R_{\geq 0}}{\R_{\geq 0}}}$ defined by $h_2(\cs) := 2\gamma\frac{\amax}{\amin} (\bmax + \frac{\amax}{\gamma} + 2\cs)$.
Substituting the previous bounds on the LMI~\eqref{eq:boundQ12Q22Q12^T} we get
\begin{align*}
Q_{12}Q_{22}^{-1}\trasp{Q_{12}} \preceq (\as)^2\kappa\Big(h_1(\cs) 
    + h_2(\cs) +\gamma^2\frac{\amax}{\amin}\Big)I_n.
\end{align*}
Next, we compute
\begin{align*}
    Q_{11} &= 2B + \Big(\frac{2}{\gamma} - 2\as  \Big)\trasp{A}GA   -  2\cs I_n \\
    &\succeq \Big(2\bmin - \relu\Big(2\as - \frac{2}{\gamma}\Big)\amax  - 2\cs \Big)I_n.
\end{align*}
Finally, we have
\begin{align*}
& Q_{11} {-} Q_{12}Q_{22}^{-1}\trasp{Q_{12}}
\succeq I_n\Big(2\bmin - \relu\Big(2\as - \frac{2}{\gamma}\Big)\amax  - 2\cs\\
& \qquad-(\as)^2\kappa\Big(h_1(\cs) + h_2(\cs) +\gamma^2\frac{\amax}{\amin}\Big)\Big) \succeq 0,
\end{align*}
where the last inequality follows from constraint~\eqref{eq:nonlinear-constraint}.
This concludes the proof.
\end{proof}
\end{arxiv}

\begin{arxiv}
\begin{lemma}\label{lemma:existence-opt}
    Consider the nonlinear program~\eqref{eq:nonlinear-prog}. Then optimal parameters $c^\star, \alpha^\star, \varkappa^\star$ exist, are finite, and are strictly positive. 
\end{lemma}
\begin{proof}
    We have the obvious bounds on $c, \alpha, \varkappa$:
    \begin{align*}
    0 \leq c &\leq c_{\max} :=\min\Big\{\mf, \frac{3}{4}\amin\alpha_{\max}\Big\}, \\
    0 \leq \alpha &\leq \alpha_{\max} := \min\Big\{\frac{1}{\sqrt{\amax}} - \varepsilon, \frac{\gamma}{\amax}\Big\}, \\
    \frac{2}{3} \leq \varkappa&.
\end{align*}
    We can see that at fixed $c,\varkappa$, the function $h$ is continuous and decreasing in $\alpha$ and that for fixed $\alpha,\varkappa$, the function $h$ is continuous and decreasing in $c$. Let $\varkappa > 2/3$ be arbitrary. Pick $\tilde{c} \in {]0, \mf[}$ and note that $h(\tilde{c},0,\varkappa) = 2\mf - 2\tilde{c} > 0$. By continuity of $h$ in $\alpha$, there exists sufficiently small $\alpha > 0$ such that $h(\tilde{c},\alpha,\varkappa) \geq 0$. Then pick $0 < c < \min\{\tilde{c}, (\frac{3}{4} - \frac{1}{2\varkappa})\alpha\amin\}$. By definition, $c  \leq (\frac{3}{4} - \frac{1}{2\varkappa})\alpha\amin$ and $h(c,\alpha,\varkappa) \geq h(\tilde{c},\alpha,\varkappa) \geq 0$ since $h$ is decreasing in $c$ for fixed $\alpha,\varkappa$. In other words, for every $\varkappa > 2/3$, there exist parameter values $(c,\alpha,\varkappa)$ that are all positive and feasible (at $\varkappa = 2/3$, the optimal value of $c$ is $0$). Since $c$ and $\alpha$ are bounded above, we know that optimal values must be finite. We now show that the optimal value of $\varkappa$ is also bounded above. 

    For $\varkappa \geq 2/3$, define the set 
    \begin{align*}
        S_\varkappa := &\Big\{(c,\alpha) \in \realnonnegative \times \realnonnegative \; | \; \alpha \leq \alpha_{\max}, c \leq \Big(\frac{3}{4}-\frac{1}{2\varkappa}\Big)\alpha\amin, \\
        &\quad h(c,\alpha,\varkappa) \geq 0
    \Big\}.
    \end{align*}
    Because $h$ is continuous, we can see that for every $\varkappa \geq 2/3$, $S_\varkappa$ is a closed set. Moreover, we can also see that
    $$S_{\varkappa} \subseteq [0, c_{\max}] \times [0, \alpha_{\max}],$$
    so $S_{\varkappa}$ is a closed subset of a compact set, so it is also compact. Then for every $\varkappa \geq 2/3$ and $(c,\alpha) \in S_{\varkappa}$, the mapping $(c,\alpha) \mapsto c$ is continuous on the compact set $S_{\varkappa}$ and thus attains its maximal value at parameter values $c_{\varkappa}, \alpha_{\varkappa}$, both of which are positive for every $\varkappa > 2/3$ by the above reasoning. Consider specifically $c_1, \alpha_1 > 0$ which are the parameter values which maximize the map over the compact set $S_1$. It is now straightforward to argue that there exists $M > 0$ sufficiently large such that for all $d \geq M$, $c_d < c_1$ and thus implies that the optimal value for $\varkappa$ lies in the bounded interval $[2/3, M]$. To see this intuitively, if $M$ is sufficiently large, $h(c_M,\alpha_M,M) \geq 0$ can be made to imply that $\alpha_d \leq \frac{4}{3}\frac{c_1}{\amin}$ for every $d \geq M$ (by picking $M$ large enough and since $h$ is decreasing in $\varkappa$) and thus $c_d \leq (\frac{3}{4} - \frac{1}{2d})\alpha_d \amin < \frac{3}{4}\alpha_d \amin \leq c_1$. 
    
    Then define the set
    \begin{align*}
        S := &\Big\{(c,\alpha,\varkappa) \in \realnonnegative \times \realnonnegative \times \realnonnegative\; | \; \alpha \leq \alpha_{\max}, \\&\quad c \leq \Big(\frac{3}{4}-\frac{1}{2\varkappa}\Big)\alpha\amin,  h(c,\alpha,\varkappa) \geq 0, 2/3 \leq \varkappa \leq M
    \Big\}.
    \end{align*}
    We can easily see that $S$ is closed and is also compact since
    $$S \subseteq [0,c_{\max}] \times [0,\alpha_{\max}] \times [2/3, M].$$
    Moreover, for every $(c,\alpha,\varkappa) \in S$ the mapping $(c,\alpha,\varkappa) \mapsto c$ is continuous on the compact set $S$ and attains its maximal value at parameter values $c^\star,\alpha^\star,\varkappa^\star$ all of which are positive and finite. This concludes the proof.
\end{proof}
\end{arxiv}

\begin{figure}
    \centering
    \includegraphics[width=0.95\linewidth]{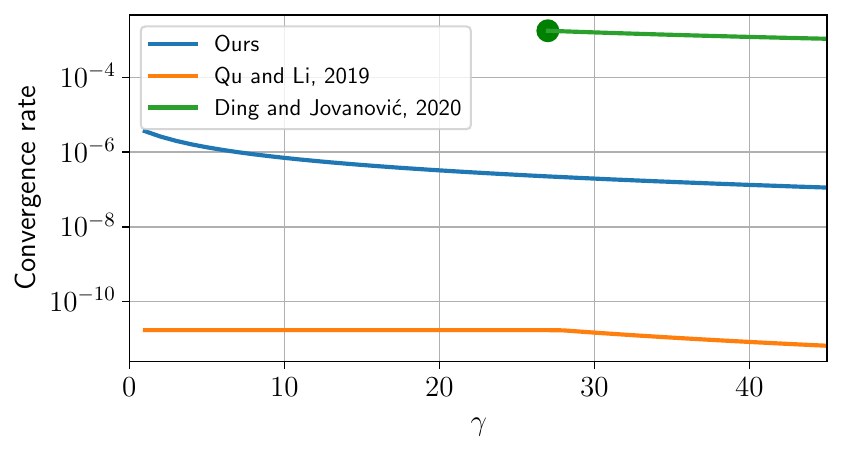}
    \caption{\change{Estimates of contraction or exponential convergence rate of the dynamics~\eqref{eq:prox-aug-primal-dual} with $(\rho,\ell,\amin,\amax) = (1.03, 27.81, 0.1, 1)$ as a function of $\gamma$. We compare the contraction estimates from~\eqref{eq:nonlinear-prog} to the exponential convergence rates from~\cite[Prop.~3]{DD-MRJ:20} and~\cite[Theorem~2]{GQ-NL:19}.}}\label{fig:contraction-rates}
\end{figure}

\change{Finally, we empirically compare our estimate of the contraction rate for the dynamics~\eqref{eq:prox-aug-primal-dual} as a function of $\gamma$ to existing convergence rates in the literature. We compare against~\cite[Prop.~3]{DD-MRJ:20} and~\cite[Theorem~2]{GQ-NL:19}. We elect to compare against~\cite{DD-MRJ:20} rather than~\cite{NKD-SZK-MRJ:19} since the estimate in~\cite{DD-MRJ:20} depends only on}
\change{$\amin,\amax$ while the estimate from~\cite[Theorem~3]{NKD-SZK-MRJ:19} depends on $A$. We note that both~\cite{DD-MRJ:20} and~\cite{GQ-NL:19} establish exponential convergence rather than contractivity, but the proof methods could be extended to establish contraction. In Figure~\ref{fig:contraction-rates}, we plot these convergence estimates.}

\change{We note that the estimate from~\cite[Prop.~3]{DD-MRJ:20} is only valid for $\gamma \geq \ell - \rho$ and this is denoted by the circle in the plot. For $\gamma < \ell - \rho$, our contraction estimate is orders of magnitude better than the estimate in~\cite{GQ-NL:19}. For $\gamma \geq \ell - \rho$, the estimate in~\cite{DD-MRJ:20} is better. In other words, our estimate appears to be the sharpest-known contraction rate which is valid for all $\gamma > 0$. Our estimates may be well-suited for poorly conditioned problems where $\rho \ll \ell$ which would require very large $\gamma$ for the analysis in~\cite{DD-MRJ:20} to apply.}

\bibliography{alias,Main,FB,New}

\end{document}